\newtheorem{theorem}{Theorem}[section]
\newtheorem{lemma}[theorem]{Lemma}
\newtheorem{proposition}[theorem]{Proposition}
\theoremstyle{definition}
\newtheorem{definition}[theorem]{Definition}
\theoremstyle{remark}
\newtheorem{remark}[theorem]{Remark}
\newcommand{\norm}[1]{\left\lVert#1\right\rVert}
\DeclarePairedDelimiter{\ceil}{\lceil}{\rceil}
\begin{document}

\title[A proof-theoretic metatheorem for nonlinear semigroups and applications]{A proof-theoretic metatheorem for nonlinear semigroups generated by an accretive operator and applications}

\author{Nicholas Pischke}
\date{\today}
\maketitle
\vspace*{-5mm}
\begin{center}
{\scriptsize Department of Mathematics, Technische Universit\"at Darmstadt,\\
Schlossgartenstra\ss{}e 7, 64289 Darmstadt, Germany, \ \\ 
E-mail: pischke@mathematik.tu-darmstadt.de}
\end{center}

\begin{abstract}
We further develop the theoretical framework of proof mining, a program in mathematical logic that seeks to quantify and extract computational information from prima facie `non-computational' proofs from the mainstream mathematical literature. To that end, we establish logical metatheorems that allow for the treatment of proofs involving nonlinear semigroups generated by an accretive operator, structures which in particular arise in the study of the solutions and asymptotic behavior of differential equations. In that way, the here established metatheorems facilitate a theoretical basis for the application of methods from the proof mining program to the wide variety of mathematical results established in the context of that theory since the 1970's. We in particular illustrate the applicability of the new systems and their metatheorems introduced here by providing two case studies on two central results due to Reich and Plant, respectively, on the asymptotic behavior of said semigroups and the resolvents of their generators where we derive rates of convergence for the limits involved which are, moreover, polynomial in all data.
\end{abstract}
\noindent
{\bf Keywords:} Proof mining; Metatheorems; Accretive operators; Nonlinear semigroups.
\\ 
{\bf MSC2020 Classification:} 03F10, 03F35, 47H06, 37L05, 47H20

\section{Introduction}

Proof mining is a program in mathematical logic that seeks to extract computational information, like (uniform) witnesses or bounds, from prima facie ``non-computational'' proofs from the mainstream mathematical literature. Historically, this endeavor goes back conceptually to Georg Kreisel's program of \emph{unwinding of proofs} from the 1950's \cite{Kre1951,Kre1952}. In its modern form, which relies on the use of well-known proof interpretations like \emph{negative translations}, Kreisel's \emph{modified realizability} and G\"odel's \emph{functional (Dialectica) interpretation}, it has been successfully developed since the 1990s by the groundbreaking work of Ulrich Kohlenbach (see in particular the early works \cite{Koh1990,Koh1992}) and his collaborators.

\medskip

The main applications of these methods are today found in the areas of nonlinear analysis and optimization and in that context, typical additional quantitative information extractable from non-constructive proofs have proved to be intimately connected with other approaches to finitary analysis. In particular, these logical methods in very general situations guarantee the existence of so-called \emph{rates of metastability} (in the sense of Terence Tao \cite{Tao2008b,Tao2008a}) for a finite quantitative account of a convergence result.

\medskip
 
The development of this modern period of proof mining is detailed comprehensively up to the year 2008 in the monograph \cite{Koh2008}, with early progress surveyed in \cite{KO2003} and recent progress in applications to the fields of nonlinear analysis and optimization surveyed in \cite{Koh2019}.

\medskip

The whole methodology of proof mining therein crucially relies on so-called \emph{general logical metatheorems} which guarantee the existence and quantify the complexity of such additional quantitative information.\footnote{Examples of such metatheorems may be found in \cite{GeK2006,GeK2008,GuK2016,Koh2005,KL2012,KN2017,Leu2006,Leu2014,Pis2022,Pis2024,PS2022,Sip2019}, as well as \cite{Koh2008} for the metatheorems obtained via (modifications of) G\"odel's Dialectica interpretation, and \cite{FLP2019} for subsequent metatheorems obtained via the bounded functional interpretation \cite{FO2005} due to F. Ferreira and P. Oliva.} Further, besides merely guaranteeing existence, these metatheorems provide an algorithmic approach towards actually extracting these quantitative results.

\medskip

In the context of nonlinear analysis, one of proof minings most successful fields of application, it is the theory of differential equations and neighboring fields like evolution equations where one so far still lacks a variety of proof mining applications with there so far only being three real case studies \cite{KKA2015,PP2022,Pis2023}. Already since the pioneering studies of Browder \cite{Bro1964}, Kato \cite{Kat1967} and Komura \cite{Kom1967}, two major tools in the study of nonlinear evolution equations have been the theory of nonlinear semigroups and their generators as well as the theory of accretive operators together with their correspondence via analogs of the Hille-Yosida theorem.

\medskip

One of the most important basic results in that context is the representation theorem due to Crandall and Liggett \cite{CL1971b} of the solution semigroup associated with the Cauchy problem
\[
\begin{cases}
u'(t)\in -Au(t),\, 0<t<\infty\\
u(0)=x
\end{cases}
\tag{$\dagger$}
\]
over a Banach space $X$ for a given set-valued accretive operator $A:X\to 2^X$, i.e.\ $A$ satisfies
\[
\norm{x-y+\lambda(u-v)}\geq\norm{x-y}
\]
for all $(x,u),(y,v)\in A$ and $\lambda\geq 0$. It is straightforward to show that any solution\footnote{A function $u:[0,\infty)\to X$ is a solution of $(\dagger)$ if $u(0)=x$, $u(t)$ is absolutely continuous, differentiable almost everywhere in $(0,\infty)$ and satisfies $(\dagger)$ almost everywhere. Note that this is often called a strong solution, but we omit this prefix here.} is unique as $A$ is accretive and if the system is solvable\footnote{As shown by Crandall and Liggett \cite{CL1971b}, this is (for strong solutions) in general not the case even for $A$ m-accretive and $\mathrm{dom}A=X$.}, then one can consider the family of operators $S(t)x=u_x(t)$ on $\mathrm{dom}A$ induced by the solutions $u_x(t)$ to $(\dagger)$ with initial values $x\in\mathrm{dom}A$. As these operators are continuous in $x$, one can consider the resulting extensions to $\overline{\mathrm{dom} A}$ which in that way generate the semigroup $\mathcal{S}=\{S(t)\mid t\geq 0\}$ on $\overline{\mathrm{dom}A}$ associated with $(\dagger)$. As shown by Brezis and Pazy \cite{BP1970}, this solution semigroup, if existent, has a particular fundamental representation in terms of a so-called exponential formula: 
\[
u_x(t)=\lim_{n\to\infty}\left(\mathrm{Id}+\frac{t}{n}A\right)^{-n}x.
\]
As shown subsequently by Crandall and Liggett \cite{CL1971b}, this formula actually always generates a nonexpansive semigroup on $\overline{\mathrm{dom}A}$ and thus facilitates a general study of equations like $(\dagger)$ even in the absence of solutions.

\medskip

Since the 1970's, an extensive range of results have been established in the theory of these semigroups and the initial value problems in the sense of ($\dagger$) associated with them, in particular in regard to the asymptotic behavior of the solutions of these differential equations, their connection and use in the study of partial differential equations and their use in the study of zeros of accretive operators (see \cite{Bar1976, Bar2010, BC2017, Miy1992, Pav1987}, among many more).

\medskip

In this paper, we extend the state-of-the-art of the underlying logical approach to proof mining to be applicable to proofs which make use of nonlinear semigroups generated by an accretive operator via the exponential formula. In particular, we establish logical metatheorems in the vein of the previously discussed results that guarantee, quantify and allow for the extraction of the computational content of theorems pertaining to these nonlinear semigroups. For that, we introduce new underlying logical systems that extend those developed for the treatment of accretive operators on normed spaces \cite{Pis2022} by carefully selected additional constants and corresponding axioms such that proofs from the mainstream literature become formalizable. To that end, we show that the initial key properties of these semigroups can be formally proved in these systems. In particular, these systems and metatheorems then further elucidate the extend of the phenomenon of so-called \emph{proof-theoretic tameness} of modern (nonlinear) analysis, i.e.\ the empirical fact that most proofs in e.g.\ analysis, although in principle being subject to well-known G\"odelian phenomena, nevertheless ``seem to be tame in the sense of allowing for the extraction of bounds of rather low complexity" \cite{Koh2020} (see also \cite{Mac2005,Mac2011} for further discussions of these types of phenomenas and their implications for logic and mathematics).

\medskip

These logical results provide a formal basis for the previous proof mining application \cite{KKA2015} carried out in the context of systems like $(\dagger)$ (generated by a certain subclass of accretive operators) and thus remove the ad-hoc nature surrounding it. Even further however, these results are expected to lead to many new case studies for proof mining in the context of that theory. In that vein, this paper provides two case studies on results due to Plant \cite{Pla1981} and Reich \cite{Rei1981} for the asymptotic behavior of these semigroups and in that context, under suitable quantitative translations of the assumptions used in the respective results, we are able to extract rates of convergence for the limits involved which are moreover polynomial in all data. In particular, we want to note that full rates of convergence are obtained here despite the fact that the sequence in question is not monotone and that the original proof is classical. This is due to a logical particularity that will be discussed after the extractions. The applicability of the present results is further substantiated by the fact that they also provide the logical basis for the previously mentioned applications \cite{Pis2022,Pis2023} in the context of results on the asymptotic behavior of these systems of differential equations due to Pazy \cite{Paz1978}, Nevanlinna and Reich \cite{NR1979}, Xu \cite{Xu2001} as well as Poffald and Reich \cite{PR1986}. At last, we want to mention that the whole logical apparatus for the treatment of these semigroups as developed in this paper requires some new technical tools. We expect that also these new logical approaches to these various notions in nonlinear analysis developed here will be of use in other circumstances than the ones described in this paper.

\section{Preliminaries}

\subsection{Nonlinear semigroups and the Crandall-Liggett formula}

The main objects of concern in this paper are the aforementioned nonlinear (and in this paper in particular nonexpansive) semigroups:
\begin{definition}
Let $C$ be a closed subset of $X$. A function $S:[0,\infty)\times C\to C$ is a \emph{(nonexpansive) semigroup on $C$} if
\begin{enumerate}
\item $S(t+s)x=S(t)S(s)x$ for all $x\in C$ and all $t,s\geq 0$,
\item $S(0)x=x$ for all $x\in C$,
\item $S(t)x$ is continuous in $t\geq 0$ for every $x\in C$,
\item $\norm{S(t)x-S(t)y}\leq\norm{x-y}$ for all $t\geq 0$ and all $x,y\in C$.
\end{enumerate}
\end{definition}

As discussed in the introduction already, these semigroups frequently arise in the study of differential and evolution equations as is e.g.\ exemplified by the initial value problem ($\dagger$). In particular, by the results of Crandall and Liggett \cite{CL1971b}, the exponential formula discussed before always generates such a semigroup on $\overline{\mathrm{dom}A}$ which will be the main object of study of this paper. Concretely, the following result was established in \cite{CL1971b}:
\begin{theorem}[Crandall and Liggett \cite{CL1971b}]\label{thm:CL}
Let $X$ be a Banach space and $A$ an accretive operator on $X$ such that there exists a $\lambda_0>0$ with
\[
\overline{\mathrm{dom}A}\subseteq\mathrm{ran}(\mathrm{Id}+\lambda A)\text{ for all }\lambda\in (0,\lambda_0].
\]
Then
\[
S(t)x:=\lim_{n\to\infty}\left(\mathrm{Id}+\frac{t}{n}A\right)^{-n}x
\]
exists for all $x\in\overline{\mathrm{dom}A}$ and $t\geq 0$ and $\mathcal{S}=\{S(t)\mid t\geq 0\}$ is a nonlinear semigroup on $\overline{\mathrm{dom}A}$.
\end{theorem}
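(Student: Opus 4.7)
The plan is to follow the classical argument of Crandall and Liggett by working with the resolvents $J_\lambda:=(\mathrm{Id}+\lambda A)^{-1}$ and showing that the iterates $J_{t/n}^n x$ form a Cauchy sequence in $n$. First I would verify that for each $\lambda\in(0,\lambda_0]$ the operator $J_\lambda$ is single-valued and nonexpansive---an immediate consequence of the accretivity of $A$---and that the range condition makes it defined on $\overline{\mathrm{dom}A}$; note that since $t/n\leq\lambda_0$ for all sufficiently large $n$, the exponential formula is then well-defined for such $n$. Directly from the definition one derives the resolvent identity
\[
J_\lambda x = J_\mu\!\left(\tfrac{\mu}{\lambda}x+\bigl(1-\tfrac{\mu}{\lambda}\bigr)J_\lambda x\right)\qquad (0<\mu\leq\lambda\leq\lambda_0),
\]
which serves as the algebraic engine of the remainder of the argument.

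The heart of the proof is the Crandall--Liggett estimate: for all $x\in\mathrm{dom}A$, all $0<\mu\leq\lambda\leq\lambda_0$ and all $n,m\in\mathbb{N}$, an inequality of the form
\[
\norm{J_\lambda^n x - J_\mu^m x} \leq |Ax|_0\cdot\Phi(n,m,\lambda,\mu),
\]
where $|Ax|_0:=\inf\{\norm{y}:y\in Ax\}$ and $\Phi$ vanishes along $n\lambda=m\mu=t$ as $n,m\to\infty$ (a typical instance being $\Phi=\sqrt{(n\lambda-m\mu)^2+n\lambda(\lambda-\mu)+m\mu(\lambda-\mu)}$). I would prove this by simultaneous induction on $(n,m)$: the base cases $n=0$ or $m=0$ reduce, via telescoping and nonexpansiveness of $J_\mu$, to the bound $\norm{J_\mu^m x-x}\leq m\mu|Ax|_0$, while the inductive step rewrites $J_\lambda^n x$ using the resolvent identity and then applies nonexpansiveness of $J_\mu$ to reduce to the pairs $(n-1,m-1)$ and $(n,m-1)$. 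This induction is the main obstacle; it is combinatorially delicate and carries the bulk of the analytic content of the theorem.

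With the estimate in hand, specialising to $\lambda=t/n$ and $\mu=t/m$ immediately yields $\norm{J_{t/n}^n x - J_{t/m}^m x}\to 0$ as $n,m\to\infty$, so $J_{t/n}^n x$ is Cauchy and hence convergent for every $x\in\mathrm{dom}A$. For general $x\in\overline{\mathrm{dom}A}$, choosing $x_k\in\mathrm{dom}A$ with $x_k\to x$ and using the nonexpansiveness bound $\norm{J_{t/n}^n x-J_{t/n}^n x_k}\leq\norm{x-x_k}$ in a standard $\varepsilon/3$-argument extends convergence to all of $\overline{\mathrm{dom}A}$; define $S(t)x$ to be the resulting limit. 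Since each $J_{t/n}^n x\in\mathrm{dom}A$ and $\overline{\mathrm{dom}A}$ is closed, $S(t)$ maps $\overline{\mathrm{dom}A}$ into itself.

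It remains to check the four semigroup axioms. The nonexpansiveness $\norm{S(t)x-S(t)y}\leq\norm{x-y}$ and the identity $S(0)x=x$ pass immediately to the limit. Continuity of $t\mapsto S(t)x$ is verified first for $x\in\mathrm{dom}A$ by setting $m=n$ and $\lambda=t/n,\mu=s/n$ in the Crandall--Liggett estimate (yielding a bound that tends to $|Ax|_0\cdot|t-s|$ as $n\to\infty$, hence a Lipschitz bound on $S(\cdot)x$) and is then extended to $\overline{\mathrm{dom}A}$ by density and nonexpansiveness. Finally, the semigroup law $S(t+s)x=S(t)S(s)x$ is obtained by comparing $J_{(t+s)/(n+m)}^{n+m}x$ with $J_{t/n}^n J_{s/m}^m x$ via one further application of the same inequality and then letting $n,m\to\infty$.
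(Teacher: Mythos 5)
The paper does not supply its own proof of Theorem~\ref{thm:CL}---it is cited as a classical result of Crandall and Liggett---so there is nothing in the text to compare against directly; your sketch follows the standard Crandall--Liggett argument and is in the right spirit. The closest thing to a proof in the paper is the formal derivation of the semigroup properties inside $H^\omega_p$ in Lemma~\ref{lem:semigroupprop}, which you can use as a check.

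On substance, the one place your sketch has a genuine gap is the semigroup law. You propose to compare $J_{(t+s)/(n+m)}^{n+m}x$ with $J_{t/n}^n J_{s/m}^m x$ ``via one further application of the same inequality and then letting $n,m\to\infty$.'' The Crandall--Liggett estimate compares two resolvent iterates applied to the \emph{same} point, so a direct application is not available; after the natural triangle-inequality split you are led to controlling $\norm{J_\lambda^m x - J_{s/m}^m x}$ and $\norm{J_\lambda^n y - J_{t/n}^n y}$ with $\lambda=(t+s)/(n+m)$ and $y=J_{s/m}^m x$, and these only tend to zero if the ratios are coupled so that $m/(n+m)\to s/(t+s)$ (equivalently $n\lambda\to t$ and $m\lambda\to s$). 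Letting $n$ and $m$ tend to infinity independently does not force this, so the limit assertion as written does not follow. The standard fix is either to take the coupled subsequences (exact when $s/t\in\mathbb{Q}$, then extend by the Lipschitz continuity in $t$ you already established), or, as the paper does in Lemma~\ref{lem:semigroupprop}(5), to first prove the power identity $[S(t)]^m x = S(mt)x$ by induction, deduce the law for rational $t,s$ by arithmetic on the exponents, and finish by density and continuity. Either route closes the gap; the rest of your outline (resolvent properties, resolvent identity, the double induction for the core estimate, Cauchy-ness, extension to $\overline{\mathrm{dom}A}$, nonexpansiveness, $S(0)=\mathrm{Id}$, Lipschitz continuity in $t$) is the classical argument and is fine.
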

We call $\mathcal{S}$ as defined above the semigroup generated by $A$ (via the exponential or Crandall-Liggett formula).\footnote{In fact, a large part of the literature calls $-A$ the generator of $\mathcal{S}$ (see e.g.\ \cite{Bar1976} and the references therein) to emphasize that the generator is dissipative. As we want to emphasize the accretiveness of the operator, we here deviated slightly from this convention.} We introduce further results and notions on and around these semigroups and accretive operators as needed throughout the paper and otherwise refer to \cite{Bar1976} for background.

\medskip

In terms of a logical treatment of these semigroups generated by an accretive operator, all of the later logical considerations naturally depend on the underlying theory of accretive operators over Banach spaces. In that vein, we logically crucially rely on the basic systems introduced in \cite{Pis2022} for the treatment of those accretive operators in the context of the extended systems of finite type commonly used in proof mining and we thus detail those in the next subsection.

\subsection{The basic system}

The basic system for accretive operators on normed spaces relies on the system $\mathcal{A}^\omega[X,\norm{\cdot}]$ introduced in \cite{GeK2008,Koh2005} as an underlying system for classical analysis over abstract normed spaces in all finite types $T^X$ defined by
\[
\mathbb{N},X\in T^X,\quad \rho,\tau\in T^X\Rightarrow \rho\to\tau\in T^X.
\]
We refer to those works, and to \cite{Koh2008} in general, for a precise exposition on the definition and basic properties of this and related systems. Accordingly, we mostly follow the notation used there as well as in \cite{Pis2022} (besides of using the above notation for the types which is of a more intuitive form and, in that vein, we also write $\mathbb{N}^\mathbb{N}$ for $\mathbb{N}\to\mathbb{N}$).

\medskip

In the context of these finite type systems, real numbers are as usual represented as fast-converging Cauchy sequences of rationals with a fixed rate. These are encoded via number theoretic functions, i.e.\ objects of type $\mathbb{N}^\mathbb{N}$, and on the level of that representation, one the can introduce the usual arithmetic operations and relations. Concretely, the relations $=_\mathbb{R}$, $\leq_\mathbb{R}$ operating on these type $\mathbb{N}^\mathbb{N}$ codes can then be chosen to be $\Pi^0_1$-formulas while $<_\mathbb{R}$ can be chosen to be a $\Sigma^0_1$-formula. 

\medskip

In general, we will omit the type of real numbers for arithmetical operations to make everything more readable. In proofs, we will almost always omit most types as to not distract from the general ideas and patterns.

\medskip

The starting point for our new systems will then by the theory $\mathcal{V}^\omega_p$ for the treatment of accretive operators on normed spaces as introduced in \cite{Pis2022}. This theory is defined as the extension of the theory $\mathcal{A}^\omega[X,\norm{\cdot}]$ with the additional constants 
\begin{itemize}
\item $\chi_A$ of type $X\to (X\to\mathbb{N})$ for the graph of the operator $A$,
\item $J^{\chi_A}$ of type $\mathbb{N}^\mathbb{N}\to (X\to X)$ for the resolvent $J^A_\gamma:=(\mathrm{Id}+\gamma A)^{-1}$ of $A$, 
\item $c_X$, $\widetilde{\gamma}$, $m_{\widetilde{\gamma}}$ of types $X$, $\mathbb{N}^\mathbb{N}$, $\mathbb{N}$, respectively, for a technical purpose in the majorization of the constants later on,
\end{itemize}
together with the corresponding axioms
\begin{enumerate}
\item[(I)] $\forall x^X,y^X(\chi_A xy\leq_\mathbb{N} 1)$,
\item[(II)] $\forall \gamma^{\mathbb{N}^\mathbb{N}},x^X\left(\gamma>_\mathbb{R}0\land \exists y^X\left(\gamma^{-1}(x-_Xy)\in Ay\right)\rightarrow \gamma^{-1}(x-_XJ^A_{\gamma}x)\in A(J^A_{\gamma}x)\right)$,
\item[(III)] $\begin{cases}\forall x^X,y^X,u^X,v^X,\lambda^{\mathbb{N}^\mathbb{N}}\big(u\in Ax\land v\in Ay\\
\qquad\rightarrow \norm{x-_Xy+_X\vert\lambda\vert(u-_Xv)}_X\geq_\mathbb{R}\norm{x-_Xy}_X\big),\end{cases}$
\item[(IV)] $\widetilde{\gamma}\geq_\mathbb{R}2^{-m_{\widetilde{\gamma}}}$,
\item[(V)] $\forall\gamma^{\mathbb{N}^\mathbb{N}}\left(\gamma>_\mathbb{R}0\rightarrow\gamma^{-1}(c_X-_XJ^A_{\gamma}c_X)\in A(J^A_{\gamma}c_X)\right)$,
\end{enumerate}
where here, and in the following, we write $J^A_\gamma$ for $J^{\chi_A}\gamma$ as well as $y\in Ax$ or $(x,y)\in A$ for $\chi_Axy=_\mathbb{N}0$. Further, as in \cite{Pis2022}, we use the abbreviation
\[
x\in\mathrm{dom}J^A_\gamma:\equiv \exists y^X\left(\gamma^{-1}(x-_Xy)\in Ay\right)
\]
and in that way can recognize the second axiom as stating
\[
\forall \gamma^{\mathbb{N}^\mathbb{N}},x^X\left(\gamma>_\mathbb{R}0\land x\in\mathrm{dom}J^A_\gamma\rightarrow \gamma^{-1}(x-_XJ^A_{\gamma}x)\in A(J^A_{\gamma}x)\right)
\]
which thereby specifies the behavior of the resolvent on its domain as dictated by its defining equality $J^A_\gamma:=(\mathrm{Id}+\gamma A)^{-1}$. We refer to \cite{Pis2022} for an (extensive) discussion of the motivation for and the particularities of this axiomatization (in particular regarding the use of reciprocals of reals) and here just note the restriction put in place by axiom (V) that the constant $c_X$ designates a common element of the domains of all resolvents $J^A_\gamma$ for $\gamma>0$. As discussed in \cite{Pis2022}, this assumption is easily satisfied for most applications which in particular include those situations where one assumes a range condition like
\[
\mathrm{dom} A\subseteq\bigcap_{\gamma>0}\mathrm{ran}(\mathrm{Id}+\gamma A)
\]
which will be the case in this work in particular as will be discussed in the coming sections.

\medskip

As shown in \cite{Pis2022}, the main parts of the basic theory of accretive operators and in particular their resolvents can then be immediately formally derived in the system $\mathcal{V}^\omega_p$ and we give an indication of that in the following lemma. For that, we also formally introduce the Yosida approximate  $A_\gamma$ defined via
\[
A_\gamma:=\frac{1}{\gamma}(\mathrm{Id}-J^A_\gamma)
\]
in the context of the formal system by treating $A_\gamma x$ as an abbreviation for the term $\gamma^{-1}(x-_XJ^A_\gamma x)$.\footnote{Note the discussion given in \cite{Pis2022} on the subtleties of the reciprocal of real arithmetic in the context of these systems of finite types and the resulting subtleties of the above definition.} 

\begin{lemma}[\cite{Pis2022}]\label{lem:basicProp}
The system $\mathcal{V}^\omega_{p}$ proves:
\begin{enumerate}
\item $J^A_\gamma$ is unique for any $\gamma>0$, i.e.\ 
\[
\forall \gamma^{\mathbb{N}^\mathbb{N}},p^X,x^X\left(\gamma>_\mathbb{R}0\land\gamma^{-1}(x-_Xp)\in Ap\rightarrow p=_X J^A_{\gamma} x\right).
\]
\item $J^A_\gamma$ is firmly nonexpansive for any $\gamma>0$ (on its domain), i.e.
\begin{gather*}
\forall\gamma^{\mathbb{N}^\mathbb{N}},r^{\mathbb{N}^\mathbb{N}},x^X,y^X\Big(\gamma>_\mathbb{R}0\land x\in\mathrm{dom}J^A_\gamma\land y\in\mathrm{dom}J^A_\gamma\land r>_\mathbb{R}0\\
\rightarrow\norm{J^A_\gamma x-_XJ^A_\gamma y}_X\leq_\mathbb{R}\norm{r(x-_Xy)+_X(1-r)(J^A_\gamma x-_XJ^A_\gamma y)}_X\Big).
\end{gather*}
\item $J^A_\gamma$ is nonexpansive for any $\gamma>0$ (on its domain), i.e.
\begin{gather*}
\forall\gamma^{\mathbb{N}^\mathbb{N}},x^X,y^X\Big(\gamma>_\mathbb{R}0\land x\in\mathrm{dom}J^A_\gamma\land y\in\mathrm{dom}J^A_\gamma\\
\rightarrow \norm{x-_Xy}_X\geq_\mathbb{R}\norm{J^A_\gamma x-_XJ^A_\gamma y}_X\Big).
\end{gather*}
\item $J^A$ is extensional in both arguments (on its domain), i.e.
\begin{gather*}
\forall \gamma^{\mathbb{N}^\mathbb{N}}>_\mathbb{R}0,x^X,{x'}^X\big(x\in\mathrm{dom}J^A_\gamma\land x'\in\mathrm{dom}J^A_\gamma\land x=_Xx'\\
\rightarrow J^A_\gamma x=_XJ^A_{\gamma'}x'\big),\\
\forall \gamma^{\mathbb{N}^\mathbb{N}}>_\mathbb{R}0,{\gamma'}^{\mathbb{N}^\mathbb{N}}>_\mathbb{R}0,x^X\big(x\in\mathrm{dom}J^A_\gamma\land x\in\mathrm{dom}J^A_{\gamma'}\land \gamma=_\mathbb{R}\gamma'\\
\rightarrow J^A_\gamma x=_XJ^A_{\gamma'}x\big).
\end{gather*}
\item $J^A$ satisfies the resolvent identity, i.e.
\begin{gather*}
\forall \gamma^{\mathbb{N}^\mathbb{N}},\lambda^{\mathbb{N}^\mathbb{N}},x^X\Big(\gamma>_\mathbb{R}0\land\lambda>_\mathbb{R}0\land x\in\mathrm{dom}J^A_\lambda\\
\rightarrow J^A_\lambda x=_X J^A_{\gamma}\left(\frac{\gamma}{\lambda} x+_X\left(1-\frac{\gamma}{\lambda}\right)J^A_\gamma x\right)\Big).
\end{gather*}
\item $J^A$ has controlled displacement, i.e.
\begin{gather*}
\forall \gamma^{\mathbb{N}^\mathbb{N}},\lambda^{\mathbb{N}^\mathbb{N}},x^X\Big(\gamma>_\mathbb{R}0\land\lambda>_\mathbb{R}0\land x\in\mathrm{dom}J^A_\gamma\land x\in\mathrm{dom}J^A_\lambda\\
\rightarrow\norm{x-_XJ^A_\gamma x}_X\leq_\mathbb{R}\left(2+\frac{\gamma}{\lambda}\right)\norm{x-_XJ^A_\lambda x}_X\Big).
\end{gather*}
\item $A_\gamma$ is $2\gamma^{-1}$-Lipschitz continuous for any $\gamma>0$, i.e.
\begin{gather*}
\forall\gamma^{\mathbb{N}^\mathbb{N}},x^X,y^X\Big(\gamma>_\mathbb{R}0\land x\in\mathrm{dom}J^A_\gamma\land y\in\mathrm{dom}J^A_\gamma\\
\rightarrow\norm{A_\gamma x-_XA_\gamma y}_X\leq_\mathbb{R}2\gamma^{-1}\norm{x-_Xy}_X\Big).
\end{gather*}
\item $A_\gamma x$ is bounded by any $y\in Ax$ for any $\gamma>0$, i.e.
\[
\forall\gamma^{\mathbb{N}^\mathbb{N}},x^X,y^X\left(\gamma>_\mathbb{R}0\land y\in Ax\land x\in\mathrm{dom}J^A_\gamma \rightarrow\norm{A_\gamma x}_X\leq_\mathbb{R}\norm{y}_X\right).
\]
In particular $\norm{x-_XJ_\gamma x}_X\leq_\mathbb{R} \gamma\norm{y}_X$.
\end{enumerate}
\end{lemma}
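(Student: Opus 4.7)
The plan is to derive each item by chaining accretivity (axiom (III)) with the defining behavior of $J^A_\gamma$ on its domain (axiom (II)), exactly as one does informally, and then to verify that every step stays inside $\mathcal{V}^\omega_p$ (in particular, that the domain hypotheses are exactly what axiom (II) needs to fire). Item (1) is the crucial tool: if $\gamma^{-1}(x-p)\in Ap$ and also, by (II), $\gamma^{-1}(x-J^A_\gamma x)\in A(J^A_\gamma x)$, then axiom (III) with $\lambda=\gamma$ gives
\[
\norm{p-_XJ^A_\gamma x}_X\leq_\mathbb{R}\norm{(p-_XJ^A_\gamma x)+_X(J^A_\gamma x-_Xp)}_X=_\mathbb{R} 0,
\]
so $p=_X J^A_\gamma x$. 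Extensionality (4) is then immediate from (1) combined with (II), since both sides satisfy the defining inclusion.

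For items (2) and (3), I would apply axiom (III) to the pair $u=A_\gamma x\in A(J^A_\gamma x)$ and $v=A_\gamma y\in A(J^A_\gamma y)$ (both available from (II)) and rewrite $r(x-y)+(1-r)(J^A_\gamma x-J^A_\gamma y)$ as $(J^A_\gamma x-J^A_\gamma y)+r\gamma(A_\gamma x-A_\gamma y)$, so that (III) with $\lambda=r\gamma$ yields firm nonexpansiveness; specializing $r=1$ gives nonexpansiveness. For (5), set $z=(\gamma/\lambda)x+(1-\gamma/\lambda)J^A_\lambda x$; one checks the algebraic identity $\gamma^{-1}(z-J^A_\lambda x)=\lambda^{-1}(x-J^A_\lambda x)\in A(J^A_\lambda x)$ and concludes by the uniqueness established in (1) that $J^A_\gamma z=J^A_\lambda x$. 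For (6), apply the resolvent identity of (5), use nonexpansiveness of $J^A_\gamma$ on the pair $(x,z)$ to obtain $\norm{J^A_\gamma x-_XJ^A_\lambda x}_X\leq \abs{1-\gamma/\lambda}\norm{x-_XJ^A_\lambda x}_X$, and combine with the triangle inequality; the bound $2+\gamma/\lambda$ then follows from the trivial estimate $1+\abs{1-\gamma/\lambda}\leq 2+\gamma/\lambda$. Item (7) is immediate from the definition of $A_\gamma$, the triangle inequality, and nonexpansiveness (3). For (8), given $y\in Ax$, apply axiom (III) to the pair $A_\gamma x\in A(J^A_\gamma x)$ and $y\in Ax$ with $\lambda=\gamma$: since $J^A_\gamma x-x+\gamma A_\gamma x=0$ by the very definition of $A_\gamma$, the inequality collapses to $\gamma\norm{y}_X\geq\norm{x-_XJ^A_\gamma x}_X=\gamma\norm{A_\gamma x}_X$.

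The main obstacle is not conceptual — the calculations are standard — but rather notational and formal: one must keep track of which arguments actually lie in $\mathrm{dom}(J^A_\gamma)$ at each step, so that axiom (II) can be invoked to produce the witness $A_\gamma x\in A(J^A_\gamma x)$, and one must handle the real-arithmetic subtleties flagged in the footnote about $\gamma^{-1}$ carefully (for instance, in (5), verifying that the scalar combination defining $z$ is well-behaved as a type-$X$ term and that the rewriting $\gamma^{-1}(z-J^A_\lambda x)=\lambda^{-1}(x-J^A_\lambda x)$ goes through in the formal system under the hypothesis $\gamma,\lambda>_\mathbb{R}0$). Since all formulas involved are purely equational or $\Pi^0_1$, and axioms (II)--(III) are stated in precisely the form used above, each derivation fits within $\mathcal{V}^\omega_p$ without need for any induction or choice principle.
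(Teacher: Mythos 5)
The paper states this lemma as imported from \cite{Pis2022} and gives no proof here, so there is no in-paper argument to compare against; but your derivations are the expected ones and are correct. You correctly identify (1) as the key lemma: apply (II) to get $\gamma^{-1}(x-J^A_\gamma x)\in A(J^A_\gamma x)$, then feed the pair $(p,\gamma^{-1}(x-p))$ and $(J^A_\gamma x,\gamma^{-1}(x-J^A_\gamma x))$ into (III) with $|\lambda|=\gamma$ so that the left-hand side collapses to $0$. The rewriting for (2) is exactly $r(x-y)+(1-r)(J^A_\gamma x-J^A_\gamma y)=(J^A_\gamma x-J^A_\gamma y)+r\gamma(A_\gamma x-A_\gamma y)$, and applying (III) with $\lambda=r\gamma>0$ gives firm nonexpansiveness with (3) the special case $r=1$. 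Items (4), (5), (6), (7), (8) all follow as you describe; one small point worth making explicit in (6) is that the auxiliary point $z=(\gamma/\lambda)x+(1-\gamma/\lambda)J^A_\lambda x$ is indeed in $\mathrm{dom}(J^A_\gamma)$ (witnessed by $J^A_\lambda x$ via the identity you derive), which is what licenses applying nonexpansiveness of $J^A_\gamma$ to the pair $(x,z)$, and which you implicitly establish en route. Your caveats about the formal handling of $\gamma^{-1}$ and the need to track domain hypotheses so that (II) fires are exactly the right points of care and match the footnote's warning in the paper.
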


\subsection{The basic bound extraction theorems}

The main result established in \cite{Pis2022} is the logical metatheorem on bound extractions for $\mathcal{V}^{\omega}_p$ (and related systems) akin to the usual metatheorems of proof mining. Throughout the paper, we do not go into explicit detail regarding the proof of any bound extraction theorem and only provide sketches for the relevant additions and changes as the proofs otherwise follow the usual standard outline of most bound extraction results in proof mining established in \cite{GeK2008,Koh2005}.

\medskip

The prevalent central ingredient that we will focus on later in the context of the new bound extraction results will be the majorizability of the new constants used to treat nonlinear semigroups. Under majorizability, if not explicitly stated otherwise, we will here understand the extension due to \cite{GeK2008,Koh2005} of the strong majorizability of Bezem \cite{Bez1985} (which in turn builds on Howard's majorizability \cite{How1973}) to the new types in $T^X$. A fundamental paradigm of the bound extraction results is then that one achieves uniform bounds even in the absence of compactness by majorizing the bounds extracted by the underlying functional interpretation.

\medskip

In that way, following \cite{GeK2008,Koh2005}, majorants of objects with types from $T^X$ will be objects with types from $T$ related by the following projection:

\begin{definition}[\cite{GeK2008}]
Define $\widehat{\tau}\in T$, given $\tau\in T^X$, by recursion on the structure via
\[
\widehat{\mathbb{N}}:=\mathbb{N},\;\widehat{X}:=\mathbb{N},\;\widehat{\xi\to\tau}:=\widehat{\xi}\to\widehat{\tau}.
\]
\end{definition}

The majorizability relation is then defined in tandem with the structure of all majorizable functionals in the sense of the following definition.

\begin{definition}[\cite{GeK2008,Koh2005}]
Let $(X,\norm{\cdot})$ be a non-empty normed space. The structure $\mathcal{M}^{\omega,X}$ and the majorizability relation $\gtrsim_\tau$ are defined by
\[
\begin{cases}
\mathcal{M}_\mathbb{N}:=\mathbb{N}, n\gtrsim_\mathbb{N} m:=n\geq m\land n,m\in\mathbb{N},\\
\mathcal{M}_X:= X, n\gtrsim_X x:= n\geq\norm{x}\land n\in \mathcal{M}_\mathbb{N},x\in \mathcal{M}_X,\\
x^*\gtrsim_{\xi\to\tau}x:=x^*\in \mathcal{M}_{\widehat{\tau}}^{\mathcal{M}_{\widehat{\xi}}}\land x\in \mathcal{M}_\tau^{\mathcal{M}_\xi}\\
\hphantom{x^*\gtrsim_{\xi\to\tau}x:=}\land\forall y^*\in \mathcal{M}_{\widehat{\xi}},y\in \mathcal{M}_\xi(y^*\gtrsim_\xi y\rightarrow x^*y^*\gtrsim_\tau xy)\\
\hphantom{x^*\gtrsim_{\xi\to\tau}x:=}\land\forall y^*,y\in \mathcal{M}_{\widehat{\xi}}(y^*\gtrsim_{\widehat{\xi}}y\rightarrow x^*y^*\gtrsim_{\widehat{\tau}}x^*y),\\
\mathcal{M}_{\xi\to\tau}:=\left\{x\in \mathcal{M}_\tau^{\mathcal{M}_\xi}\mid \exists x^*\in \mathcal{M}^{\mathcal{M}_{\widehat{\xi}}}_{\widehat{\tau}}\left(x^*\gtrsim_{\xi\to\tau}x\right)\right\}.
\end{cases}
\]
\end{definition}

At a high level, the proofs of most bound extraction theorems then proceed as follows: using a variant of G\"odel's functional interpretation \cite{Goe1958} and a negative translation (e.g.\ \cite{Kur1951}), realizers are extracted from classical proofs of (essentially) $\forall\exists$-theorems. These realizers are then majorized to provide respective bounds which are validated in a model based on the structure of all majorizable functionals $\mathcal{M}^{\omega,X}$. If the types of all objects are low enough, one can then recover to the truth of the respective bound in a model based on the usual set-theoretic standard structure $\mathcal{S}^{\omega,X}$ of the underlying language defined by $\mathcal{S}_\mathbb{N}:=\mathbb{N}$, $\mathcal{S}_X:= X$ and
\[
\mathcal{S}_{\xi\to\tau}:=\mathcal{S}_{\tau}^{\mathcal{S}_{\xi}}.
\]

The resulting metatheorem for the case of the theory $\mathcal{V}^\omega_p$ then takes the form of the following theorem.  Here, we followed the names and notational conventions established in \cite{GeK2008,Koh2005} (see also \cite{Koh2008}) regarding so-called ``admissible'' types. which provide a formal perspective of the previously mentioned vague notion of ``low enough''.

\begin{theorem}[\cite{Pis2022}]\label{thm:metatheoremC}
Let $\tau$ be admissible, $\delta$ be of degree $1$ and $s$ be a closed term of $\mathcal{V}^\omega_p$ of type $\delta\to\sigma$ for admissible $\sigma$. Let $B_\forall(x,y,z,u)$/$C_\exists(x,y,z,v)$ be $\forall$-/$\exists$-formulas of $\mathcal{V}^{\omega}_p$ with only $x,y,z,u$/$x,y,z,v$ free. Let $\Delta$ be a set of formulas of the form $\forall\underline{a}^{\underline{\alpha}}\exists\underline{b}\preceq_{\underline{\beta}}\underline{r}\underline{a}\forall\underline{c}^{\underline{\zeta}}F_{qf}(\underline{a},\underline{b},\underline{c})$ where $F_{qf}$ is quantifier-free, the types in $\underline{\alpha}$, $\underline{\beta}$ and $\underline{\zeta}$ are admissible and $\underline{r}$ is a tuple of closed terms of appropriate type. If
\[
\mathcal{V}^{\omega}_p+\Delta\vdash\forall x^\delta\forall y\preceq_\sigma s(x)\forall z^\tau\left(\forall u^\mathbb{N} B_\forall(x,y,z,u)\rightarrow\exists v^\mathbb{N} C_\exists(x,y,z,v)\right),
\]
then one can extract a partial functional $\Phi:\mathcal{S}_{\delta}\times \mathcal{S}_{\widehat{\tau}}\times\mathbb{N}\rightharpoonup\mathbb{N}$ which is total and (bar-recursively) computable on $\mathcal{M}_\delta\times\mathcal{M}_{\widehat{\tau}}\times\mathbb{N}$ and such that for all $x\in \mathcal{S}_\delta$, $z\in \mathcal{S}_\tau$, $z^*\in \mathcal{S}_{\widehat{\tau}}$ and all $n\in\mathbb{N}$, if $z^*\gtrsim z$ and $n\geq_\mathbb{R}\norm{c_X-_XJ^A_{\widetilde{\gamma}}c_X}_X, m_{\widetilde{\gamma}},\vert\widetilde{\gamma}\vert,\norm{c_X}_X$, then
\begin{gather*}
\mathcal{S}^{\omega,X}\models\forall y\preceq_\sigma s(x)\big(\forall u\leq_\mathbb{N}\Phi(x,z^*,n) B_\forall(x,y,z,u)\\
\rightarrow\exists v\leq_\mathbb{N}\Phi(x,z^*,n)C_\exists(x,y,z,v)\big)
\end{gather*}
holds whenever $\mathcal{S}^{\omega,X}\models\Delta$ for $\mathcal{S}^{\omega,X}$ defined via any (nontrivial) normed space $(X,\norm{\cdot})$ with $\chi_A$ interpreted by the characteristic function of an accretive $A$ such that $\bigcap_{\gamma>0}\mathrm{dom}J^A_\gamma\neq\emptyset$, $J^{\chi_A}$ by the corresponding resolvents $J^A_\gamma$ for $\gamma>0$ and the other constants accordingly such that the corresponding axioms hold. 

Further: If $\widehat{\tau}$ is of degree $1$, then $\Phi$ is a total computable functional. If the claim is proved without $\mathrm{DC}$, then $\tau$ may be arbitrary and $\Phi$ will be a total functional on $\mathcal{S}_\delta\times \mathcal{S}_{\widehat{\tau}}\times\mathbb{N}$ which is primitive recursive in the sense of G\"odel's T. In that latter case, also plain majorization can be used instead of strong majorization.
\end{theorem}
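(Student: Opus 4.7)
The plan is to follow the standard blueprint for monotone proof-theoretic bound extraction of the Gerhardy-Kohlenbach type developed in \cite{GeK2008,Koh2005}, treating the constants for the resolvent and the auxiliary witnesses $c_X,\tilde\gamma,m_{\tilde\gamma}$ as the only genuinely new ingredients requiring attention. From the assumed classical derivation in $\mathcal{V}^\omega_p+\Delta$ I would first apply a negative translation (e.g.\ Kuroda's) to obtain an intuitionistic derivation of the translated statement in the corresponding intuitionistic variant of the system, augmented by the negative translations of $\Delta$ and of the nonconstructive principles already present in $\mathcal{A}^\omega[X,\|\cdot\|]$ (quantifier-free choice, a form of $\mathrm{DC}$, etc.). To this I would then apply the monotone functional interpretation (Gödel's Dialectica composed with Howard/Bezem majorizability, lifted to the type structure $T^X$), which produces closed terms of $\mathsf{T}+\mathrm{BR}$ majorizing the extracted realizers, from which the numerical bound $\Phi(x,z^*,n)$ on $v$ is directly read off.

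For this extracted $\Phi$ to be well-defined one must exhibit majorants in $\mathcal{M}^{\omega,X}$ for every constant of $\mathcal{V}^\omega_p$. The constants of $\mathcal{A}^\omega[X,\|\cdot\|]$ are handled as in \cite{GeK2008,Koh2005}, $\chi_A$ is trivially bounded by the constant $1$, and the triple $c_X,\tilde\gamma,m_{\tilde\gamma}$ is majorized by the numerical data collected in the parameter $n$. The delicate constant is $J^{\chi_A}$: one needs a uniform bound on $\|J^A_\gamma x\|$ depending only on majorants of $\gamma$ and of $\|x\|$. Nonexpansiveness on the domain (Lemma~\ref{lem:basicProp}(3)) together with axiom~(V), which ensures $c_X\in\mathrm{dom}(J^A_\gamma)$ for every $\gamma>0$, yields
\[
\|J^A_\gamma x\|\leq\|x\|+\|c_X\|+\|c_X-J^A_\gamma c_X\|,
\]
and Lemma~\ref{lem:basicProp}(6) further bounds the last summand by $(2+|\gamma|/\tilde\gamma)\cdot\|c_X-J^A_{\tilde\gamma}c_X\|$. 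This is exactly where axioms~(IV)-(V) earn their keep: the constant $m_{\tilde\gamma}$ together with the upper bound on $|\tilde\gamma|$ and on $\|c_X-J^A_{\tilde\gamma}c_X\|$ packaged in $n$ makes the ratio $|\gamma|/\tilde\gamma$ majorizable by a primitive recursive term in the majorants of $\gamma$ and in $n$, producing a primitive recursive majorant of $J^{\chi_A}$ at the required low type level.

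Once every constant is majorizable uniformly in the data $n$, soundness of the monotone interpretation verifies $\Phi$ in the majorizability model $\mathcal{M}^{\omega,X}$, and the admissibility of $\delta,\sigma,\tau$ (respectively, the degree-one assumption on $\widehat\tau$) allows transferring the bound back to the full set-theoretic model $\mathcal{S}^{\omega,X}$ via the coincidence arguments of \cite{GeK2008,Koh2005}; when $\mathrm{DC}$ is not used, bar recursion collapses to Gödel primitive recursion and the restriction on $\tau$ can be dropped. The side assumptions $\Delta$, being in monotone $\forall\exists\forall$-form with explicit bounds $\underline r$, are absorbed by the interpretation via these bounds and cause no further complication. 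The main technical hurdle of the whole argument is therefore precisely the construction of the majorant of $J^{\chi_A}$: everything else follows the established proof-mining pattern, but crucially depends on the specific axiomatization through $c_X,\tilde\gamma,m_{\tilde\gamma}$ that makes this majorant available uniformly in $n$.
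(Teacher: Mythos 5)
Your proposal is correct and follows the same blueprint the paper sketches (and attributes to \cite{Pis2022}): Kuroda negative translation followed by the monotone Dialectica interpretation over $\mathcal{A}^\omega[X,\norm{\cdot}]$, with the only genuinely new ingredient being the majorization of $J^{\chi_A}$ using axioms (IV)--(V) together with Lemma~\ref{lem:basicProp}(3),(6), validated in $\mathcal{M}^{\omega,X}$ and transferred to $\mathcal{S}^{\omega,X}$ by the usual coincidence arguments. There is only a small arithmetic slip in your first display, which should read $\norm{J^A_\gamma x}\leq\norm{x}+2\norm{c_X}+\norm{c_X-J^A_\gamma c_X}$ (cf.\ Remark~\ref{rem:modelRem}); this has no effect on the majorizability argument.
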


The recent work \cite{KP2022} introduced a semi-constructive variant, in the spirit of \cite{GeK2006}, of another system from \cite{Pis2022} dealing with the treatment of maximally monotone operators over Hilbert spaces, and we similarly also want to consider a semi-constructive variant of the above system $\mathcal{V}^\omega_p$ in this work. Conceptually similar to the circumstances in \cite{GeK2006, KP2022}, this variant $\mathcal{V}^\omega_{i,p}$ is defined as an extension of $\mathcal{A}^\omega_i[X,\norm{\cdot}]+\mathrm{IP}_\neg+\mathrm{CA}_\neg$ with the basic system $\mathcal{A}^\omega_i=\mathrm{E}\text{-}\mathrm{HA}^\omega+\mathrm{AC}$ (defined as in \cite{GeK2006}) in the same manner as indicated above. We refer to the discussion given in \cite{GeK2006} on the resulting differences of the properties of $\mathcal{V}^\omega_{i,p}$ compared to $\mathcal{V}^\omega_p$ and in particular on the additional strength of the non-constructive principles allowed in the context of $\mathcal{V}^\omega_{i,p}$ while aiming for bound extractions.

Similar to \cite{KP2022}, we can show the following result by adapting \cite[Theorem 4.11]{GeK2006} (where now $\gtrsim$ denotes (not necessarily strong) majorization interpreted in the model $\mathcal{S}^{\omega,X}$):
\begin{theorem}\label{thm:metatheoremI}
Let $\delta$ be of degree $1$ and $\sigma,\tau$ be arbitrary, $s$ be a closed term of suitable type. Let $\Gamma_\neg$ be a set of sentences of the form $\forall\underline{a}^{\underline{\alpha}}(E(\underline{a})\rightarrow \exists\underline{b}\preceq_{\underline{\beta}}\underline{t}\underline{a}\neg F(\underline{a},\underline{b}))$ with $\underline{\alpha},\underline{\beta}$ and $E,F$ arbitrary types and formulas respectively and where $\underline{t}$ is a tuple of closed terms. Let $B(x,y,z)$/$C(x,y,z,u)$ be arbitrary formulas of $\mathcal{V}^{\omega}_{i,p}$ with only $x,y,z$/$x,y,z,u$ free. If
\[
\mathcal{V}^{\omega}_{i,p}+\Gamma_\neg\vdash\forall x^\delta\,\forall y\preceq_\sigma s(x)\,\forall z^\tau\,(\neg B(x,y,z)\rightarrow\exists u^\mathbb{N} C(x,y,z,u)), 
\]
one can extract a $\Phi:\mathcal{S}_\delta\times\mathcal{S}_{\widehat{\tau}}\times\mathbb{N}\to\mathbb{N}$ which is primitive recursive in the sense of G\"odel's T such that for any $x\in\mathcal{S}_\delta$, any $y\in\mathcal{S}_\sigma$ with $y\preceq_\sigma s(x)$, any $z\in\mathcal{S}_{\tau}$ and $z^*\in\mathcal{S}_{\widehat{\tau}}$ with $z^*\gtrsim z$ and any $n\in\mathbb{N}$ with $n\geq_\mathbb{R}\norm{c_X-_XJ^A_{\widetilde{\gamma}}c_X}_X, m_{\widetilde{\gamma}},\vert\widetilde{\gamma}\vert,\norm{c_X}_X$, we have that
\[
\mathcal{S}^{\omega,X}\models\exists u\leq_\mathbb{N}\Phi(x,z^*,n)\,(\neg B(x,y,z)\rightarrow C(x,y,z,u))
\]
holds whenever $\mathcal{S}^{\omega,X}\models\Gamma_\neg$ for $\mathcal{S}^{\omega,X}$ defined via any (nontrivial) normed spaces $(X,\norm{\cdot})$ with $\chi_A$ interpreted by the characteristic function of an accretive operator $A$ such that $\bigcap_{\gamma>0}\mathrm{dom}J^A_\gamma\neq\emptyset$, $J^{\chi_A}$ by the corresponding resolvents $J^A_\gamma$ for $\gamma>0$ and the other constants accordingly such that the corresponding axioms hold. 
\end{theorem}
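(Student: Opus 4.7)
The plan is to follow the template of \cite[Theorem 4.11]{GeK2006} with the operator-theoretic adaptation carried out in \cite{KP2022}, combining modified realizability (in place of the Dialectica interpretation used for the classical system $\mathcal{V}^\omega_p$) with a monotone/majorization argument tailored to the new constants $\chi_A$, $J^{\chi_A}$, $c_X$, $\tilde{\gamma}$, $m_{\tilde\gamma}$. First, I would check that modified realizability is sound for $\mathcal{V}^\omega_{i,p}+\Gamma_\neg$: the additional axioms (I)--(V) either are purely universal (I, III, IV) or, in the case of (II) and (V), have the existential in the conclusion witnessed by the explicit term $J^{\chi_A}_\gamma$, so they admit trivial closed-term realizers. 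The external principles in $\Gamma_\neg$ have the characteristic form $\forall \underline{u}(C(\underline{u})\to\exists \underline{v}\preceq \underline{t}\underline{u}\,\neg D(\underline{u},\underline{v}))$; since $\neg D$ is negative, any tuple bounded by $\underline{t}\underline{u}$ realizes such a sentence, so majorants of the closed $\underline{t}$ already discharge them uniformly.

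Next I would invoke the soundness of modified realizability together with monotone majorization to extract, from the given derivation, closed terms $t_u$ of $\mathcal{V}^\omega_{i,p}$ witnessing the existential quantifier over $u$ uniformly in all parameters, and then strongly majorize these terms in the model $\mathcal{M}^{\omega,X}$. The core technical step is the construction of explicit majorants for the new constants. Axiom (I) gives the constant $1$ as a majorant for $\chi_A$; axiom (IV) together with the hypothesis $n\geq m_{\tilde\gamma},\abs{\tilde\gamma}$ majorizes $\tilde\gamma$ and $m_{\tilde\gamma}$; and $c_X$ is majorized by any $n\geq \norm{c_X}_X$. The essential ingredient is a majorant for $J^{\chi_A}$: using axioms (II) and (V) together with Lemma \ref{lem:basicProp}(3) and (6), for $\gamma>0$ and $x\in\mathrm{dom}(J^A_\gamma)$ one obtains
\[
\norm{J^A_\gamma x}_X \leq \norm{x}_X + \norm{c_X}_X + \left(2+\tfrac{\gamma}{\tilde\gamma}\right)\norm{c_X-J^A_{\tilde\gamma}c_X}_X,
\]
which is expressible uniformly in precisely the data $n\geq\norm{c_X-J^A_{\tilde\gamma}c_X}_X,\, m_{\tilde\gamma},\, \abs{\tilde\gamma},\, \norm{c_X}_X$ listed in the hypothesis. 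Together with the Lipschitz-type bound (7) of Lemma \ref{lem:basicProp} this yields a majorant of $J^{\chi_A}$ as a functional of its two arguments that is primitive recursive in $n$.

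Plugging these majorants into the majorized term $t_u^*$ produces the sought functional $\Phi$, which is primitive recursive in the sense of G\"odel's $T$. A transfer argument using that the types of the parameters are admissible (so $\mathcal{M}^{\omega,X}$ and $\mathcal{S}^{\omega,X}$ agree on the quantified objects in the relevant way, with the majorant $z^*$ supplied externally) passes the validity of $\exists u\leq \Phi(x,z^*,n)(\neg B\to A)$ from the majorizable model back to $\mathcal{S}^{\omega,X}$.

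The hard part will be the majorization of $J^{\chi_A}$: the resolvent is only partially defined (its domain depends on the range condition), so its bound as a type-$2$ functional cannot be controlled without an anchor. This is precisely what axiom (V) and the constant $c_X$ are designed to provide, and getting the right quantitative form of the nonexpansiveness estimate above, uniformly across all $\gamma>0$, is what makes the parameter $n$ in the theorem statement involve exactly the four listed norms. A secondary subtlety is checking that (II) really does admit a term realizer despite its hypothesis mentioning the abbreviation $x\in\mathrm{dom}(J^A_\gamma)$, which unfolds to an existential; here the point is that the conclusion supplies the same witness $J^A_\gamma x$ used implicitly by the hypothesis, so no genuine realizer of the antecedent's existential is required.
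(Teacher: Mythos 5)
Your proposal follows exactly the route the paper takes: the paper states Theorem \ref{thm:metatheoremI} without a detailed proof, only indicating that one adapts \cite[Theorem 4.11]{GeK2006} in the manner of \cite{KP2022}, i.e., replacing the monotone Dialectica interpretation by the monotone variant of Kreisel's modified realizability and majorizing the new constants, with the crucial majorant for $J^{\chi_A}$ obtained from the nonexpansiveness and resolvent-comparability estimates anchored at $c_X$ precisely as you outline. One small arithmetic slip: the bound should read $\norm{J^A_\gamma x}\leq\norm{x}+2\norm{c_X}+(2+\gamma/\tilde\gamma)\norm{c_X-J^A_{\tilde\gamma}c_X}$ (a factor $2$ on $\norm{c_X}$, cf.\ Remark \ref{rem:modelRem}), but since $n\geq\norm{c_X}$ is among the hypotheses this is inconsequential for the final $\Phi$.
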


We emphasize that while the classical logical metatheorems are derived using a monotone variant of G\"odel's Dialectica interpretation due to Kohlenbach (see \cite{Koh1996}), the above semi-constructive version rests on the use of a monotone variant due to Kohlenbach \cite{Koh1998} of Kreisel's modified realizability.

\section{Treating the normalized duality map and the alternative notion of accretivity}

Before we concern ourselves with the treatment of the semigroups, we need to extend the systems for accretive operators discussed previously in order to adequately deal with the associated notions around Theorem \ref{thm:CL}. In particular, we need to provide logical treatments of an alternative notion of accretivity, an extended range condition and the quantification over elements from the closure of the domain of $A$. We begin with the first of these in this section.

\subsection{The duality map and selection functionals}

Recall that for a Banach space $X$ with its dual space $X^*$, its normalized duality mapping
\[
J:X\to 2^{X^*}, x\mapsto \left\{x^*\in X^*\mid \langle x,x^*\rangle=\norm{x}^2=\norm{x^*}^2\right\}
\]
is non-empty for any $x\in X$ (which follows from the Hahn-Banach theorem). Many works in the context of the theory of accretive operators in general, and the treatment of semigroups generated by those operators in particular, rely on the use of this mapping and in that way, this section is concerned with a proof-theoretic treatment thereof in the context of the formal systems as discussed previously.

\medskip

As we want to refrain from providing a treatment for both the operator norm on the dual space as well as for the full duality map as a set-valued mapping, we follow the approach initiated by Kohlenbach and Leu\c{s}tean in \cite{KL2012} where the authors handle uses of $J$ by only treating certain selection functionals for $J$ (depending on the situation at hand).

\medskip

Concretely, a selection functional for the duality map $J$ is just a map $j:X\to X^*$ such that $j(x)\in J(x)$ for any $x\in X$. This general property of being a selection map can then be expressed by corresponding axioms formalizing that 
\begin{enumerate}
\item $jx:X\to\mathbb{R}$ is a linear operator for any $x\in X$;
\item $\norm{jx}\leq\norm{x}$ where $\norm{jx}$ means the operator norm;
\item $jxx=\norm{x}^2$ (which, as discussed in \cite{KL2012} already, yields $\norm{jx}=\norm{x}$). 
\end{enumerate}
Given a constant $j$ of type $X\to (X\to \mathbb{N}^\mathbb{N})$, this can then be encapsulated by the following universal axiom introduced in \cite{KL2012}:
\begin{gather*}
\forall x^X,y^X\Big(jxx=_\mathbb{R}\norm{x}_X^2\land\vert jxy\vert\leq_\mathbb{R}\norm{x}_X\norm{y}_X\\
\land \forall\alpha^{\mathbb{N}^\mathbb{N}},\beta^{\mathbb{N}^\mathbb{N}},u^X,v^X\left(jx(\alpha u+_X\beta v)=_\mathbb{R}\alpha jxu+_\mathbb{R}\beta jxv\right)\Big).
\end{gather*}
Notice that the operator norm is here avoided by expressing $\norm{jx}\leq\norm{x}$ via stipulating $\vert jxy\vert\leq_\mathbb{R}\norm{x}_X\norm{y}_X$.

\begin{remark}
As discussed in \cite{KL2012}, the functional $j$ is not provably extensional from the above axiom alone. As indicated by the use of the Dialectica interpretation, if extensionality is to be treated then one has to stipulate an associated modulus of uniform continuity which has been considered in \cite{KL2012}. The applications discussed later actually do not require an extensional or continuous selection map and we therefore do not explicitly discuss this issue any further.
\end{remark}

\subsection{The alternative notion of accretivity}\label{sec:jinterp}

Besides the purely metric notion of accretivity discussed in the preceding sections, which also forms the basis of the systems $\mathcal{V}^\omega_{p}$ and its intuitionistic variant $\mathcal{V}^\omega_{i,p}$, the more common notion of accretivity, especially in the context of nonlinear semigroups generated by such operators, is the notion introduced by Kato in \cite{Kat1967} where one stipulates that $A$ is accretive if
\[
\forall (x,u),(y,v)\in A\exists j\in J(x-y)\left( \langle u-v,j\rangle\geq 0\right).
\]
In the language of the preceding subsection, this can be recognized as stipulating the existence of a family of selection functionals $j_{u,v}$ such that, as before, $j_{u,v}x\in J(x)$ and where now further $\langle u-v,j_{u,v}(x-y)\rangle\geq 0$ for any $u\in Ax$ and $v\in Ay$.

\medskip

Formally, this leads us to the following modification of the previous system: we define $\widehat{\mathcal{V}}^\omega_p$ as the extension of $\mathcal{A}^\omega[X,\norm{\cdot}]$ with the axiom schemes (I), (II), (IV) and (V) as before, now over the language extended with a constant $j$ of type $X\to (X\to (X\to (X\to \mathbb{N}^\mathbb{N})))$ (or, in a more suggestive notation, $(X\times X\times X\times X)\to \mathbb{N}^\mathbb{N}$) together with the axioms
\begin{gather*}
\forall x^X,y^X,u^X,v^X\Big(\langle x,j_{u,v}x\rangle=_\mathbb{R}\norm{x}_X^2\land\vert \langle y,j_{
u,v}x\rangle\vert\leq_\mathbb{R}\norm{x}_X\norm{y}_X\tag{$J$}\\
\land \forall\alpha^{\mathbb{N}^\mathbb{N}},\beta^{\mathbb{N}^\mathbb{N}},z^X,w^X\left(\langle \alpha z+_X\beta w,j_{u,v}x\rangle =_\mathbb{R}\alpha \langle z,j_{u,v}x\rangle+\beta \langle w,j_{u,v}x\rangle\right)\Big)
\end{gather*}
as well as
\[
\forall x^X,y^X,u^X,v^X\left( u\in Ax\land v\in Ay \rightarrow\langle u-_Xv,j_{u,v}(x-_Xy)\rangle\geq_\mathbb{R} 0\right)\tag{$A$}
\]
where we write $j_{u,v}$ for $juv$ as well as $\langle y,j_{u,v}x\rangle$ for $juvxy$.

\medskip

It is rather immediately clear through the considerations made in \cite{KL2012} that the bound extraction theorems contained in Theorem \ref{thm:metatheoremC} and \ref{thm:metatheoremI} extend to the system $\widehat{\mathcal{V}}^\omega_p$, as we will discuss now. For this, we first have to give a suitable interpretation to the constant $j$ in the model $\mathcal{M}^{\omega,X}$ associated with an accretive operator $A$ (see \cite{Pis2022}). For that, note that the function $j$ is defined by contracting the two parameters  besides $u,v$, namely $x$ and $y$, into the one argument of $j$ (which is feasible as the witnessing functionals required by the notion of accretivity only have to satisfy $j\in J(x-y)$). The interpretation of this constant in the model now has to ``unwind'' this contraction (which essentially relies on a choice principle). Concretely, we are lead to the following interpretation of $j$ (writing $\mathcal{M}$ concisely for $\mathcal{M}^{\omega,X}$): given an accretive operator $A\subseteq X\times X$, define $[j]_\mathcal{M}$ by
\[
[j]_\mathcal{M}(u,v,z,w)=\begin{cases}(\langle w,j^A_{u,v}(z)\rangle)_\circ&\text{if }\exists x,y\in X\left( (x,u),(y,v)\in A\land z=_Xx-_Xy\right),\\
(\langle w,\widetilde{j}(z)\rangle)_\circ&\text{otherwise},
\end{cases}
\]
where $\langle\cdot,\cdot\rangle$ is application in the space $X^*$, the functionals $j^A_{u,v}(z)\in J(z)$ are those guaranteed to exist by the definition of accretivity (if such corresponding $x,y$ exist), $\widetilde{j}(z)$ is a generic element of $J(z)$ (which always exists as $J(z)\neq\emptyset$ by the Hahn-Banach theorem) and  $(\cdot)_\circ$ is the obvious extension to $\mathbb{R}$ of the operator $(\cdot)_\circ$ defined in \cite{Koh2005} on $[0,\infty)$ which selects, for a given real number, a canonical representation as a functional of type $\mathbb{N}^\mathbb{N}$. With this interpretation, the previous axioms are naturally satisfied in the model $\mathcal{M}^{\omega,X}$ associated with an accretive operator $A$.

Theorems \ref{thm:metatheoremC} and \ref{thm:metatheoremI} now extend to this setting as all the additional axioms ($J$) and ($A$) are purely universal and since the additional constant $j$ with its interpretation in the model $\mathcal{M}^{\omega,X}$ can be majorized by following the ideas presented in the proof of Theorem 2.2 in \cite{KL2012}: from $\vert\langle y,j_{u,v}x\rangle\vert\leq \norm{x}\norm{y}$, one obtains that $nm\geq\vert\langle y,j_{u,v}x\rangle\vert$ for $n\geq\norm{x}$ and $m\geq\norm{y}$ which immediately yields that the function
\[
(n,m,l,k)\mapsto (mn)_\circ
\]
defined for $n,m,k,l\in\mathbb{N}$ with $\norm{u}\leq k$, $\norm{v}\leq l$, $\norm{z}\leq m$, $\norm{w}\leq n$ is a majorant for $j$. Here $\circ$ is the previous operation, now restricted to $\mathbb{N}$ (which, as discussed in e.g.\ \cite{Koh2008}, can be explicitly calculated). This majorant is in particular actually independent on the arguments induced by the upper bounds on $\norm{u}$ and $\norm{v}$, i.e.\ $k$ and $l$.

\medskip

The question of how this notion of accretivity relates to the previously used notion immediately arises. By formalizing one direction of the proof on the equivalence of the two notions of accretivity (essentially due to Kato \cite{Kat1967}, see also Lemma 3.1 in Chapter II of \cite{Bar1976}), we obtain the following:
\begin{proposition}
The system $\widehat{\mathcal{V}}^\omega_{p}$ proves:
\begin{enumerate}
\item $\forall x^X,y^X,u^X,v^X\left( \langle y,j_{u,v}x\rangle\geq_\mathbb{R}0\rightarrow \forall\lambda^{\mathbb{N}^\mathbb{N}}\left(\norm{x}_X\leq_\mathbb{R}\norm{x+_X\vert\lambda\vert y}_X\right)\right)$.
\item $\begin{cases}\forall x^X,y^X,u^X,v^X,\lambda^{\mathbb{N}^\mathbb{N}}\big( (x,u),(y,v)\in A\\
\qquad\rightarrow\norm{x-_Xy+_X\vert\lambda\vert(u-_Xv)}_X\geq_\mathbb{R}\norm{x-_Xy}_X\big).\end{cases}$
\end{enumerate}
\end{proposition}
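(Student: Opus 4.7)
The plan is to prove (1) by a direct Kato-style computation using the axiom schema $(J)$, and then to deduce (2) by instantiating (1) together with axiom $(A)$.

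For (1), I would fix $x,y,u,v$ and assume $\langle y,j_{u,v}x\rangle\geq_\mathbb{R}0$, and let $\lambda^{\mathbb{N}^\mathbb{N}}$ be arbitrary. By the linearity clause of $(J)$ applied with $\alpha:=1$ and $\beta:=|\lambda|$ (and $z:=x$, $w:=y$), we have
\[
\langle x+_X|\lambda|y,\,j_{u,v}x\rangle \;=_\mathbb{R}\; \langle x,j_{u,v}x\rangle + |\lambda|\langle y,j_{u,v}x\rangle.
\]
By the first clause of $(J)$, $\langle x,j_{u,v}x\rangle=_\mathbb{R}\norm{x}_X^2$, and by assumption together with $|\lambda|\geq_\mathbb{R}0$, the second summand is $\geq_\mathbb{R}0$. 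Hence
\[
\norm{x}_X^2 \;\leq_\mathbb{R}\; \langle x+_X|\lambda|y,\,j_{u,v}x\rangle.
\]
The second clause of $(J)$, applied with $y$ replaced by $x+_X|\lambda|y$, gives the upper bound
\[
\langle x+_X|\lambda|y,\,j_{u,v}x\rangle \;\leq_\mathbb{R}\; \bigl|\langle x+_X|\lambda|y,\,j_{u,v}x\rangle\bigr| \;\leq_\mathbb{R}\; \norm{x}_X\cdot\norm{x+_X|\lambda|y}_X.
\]
Combining yields $\norm{x}_X^2\leq_\mathbb{R}\norm{x}_X\cdot\norm{x+_X|\lambda|y}_X$. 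A (classically available) case distinction on whether $\norm{x}_X=_\mathbb{R}0$ or $\norm{x}_X>_\mathbb{R}0$ then finishes: in the first case the conclusion holds trivially since norms are nonnegative, and in the second one divides by $\norm{x}_X$.

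For (2), I would simply instantiate (1) with $x$ replaced by $x-_Xy$, $y$ replaced by $u-_Xv$, and the parameters $u,v$ of $j$ left as they are. The hypothesis required for (1), namely $\langle u-_Xv,\,j_{u,v}(x-_Xy)\rangle\geq_\mathbb{R}0$, is supplied verbatim by axiom $(A)$ under the assumption that $(x,u),(y,v)\in A$. The conclusion of (1) is then exactly the desired inequality.

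The only nontrivial step is the final division by $\norm{x}_X$ in the proof of (1), which requires a case split on $\norm{x}_X=_\mathbb{R}0$ versus $\norm{x}_X>_\mathbb{R}0$; this is available in the classical system $\widehat{\mathcal{V}}^\omega_p$ (the intuitionistic counterpart would need a small detour via $\norm{x}_X+\varepsilon$ for $\varepsilon>0$ and a limit argument, but this is not the system in which the proposition is stated). Everything else is a routine application of the universal axioms $(J)$ and $(A)$, which explains why the statement is provable in a purely universal fragment and hence compatible with the bound extraction Theorems \ref{thm:metatheoremC} and \ref{thm:metatheoremI}.
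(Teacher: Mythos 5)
Your proof is correct and follows essentially the same route as the paper's: apply the linearity and evaluation clauses of $(J)$ to obtain $\norm{x}_X^2\leq\langle x+_X|\lambda|y,j_{u,v}x\rangle\leq\norm{x}_X\norm{x+_X|\lambda|y}_X$, then divide after a trivial case split on $\norm{x}_X=0$, and deduce (2) from (1) by instantiating at $x-_Xy$, $u-_Xv$ via axiom $(A)$. The only cosmetic difference is that the paper inserts and cancels $|\lambda|y$ via quantifier-free extensionality before splitting by linearity, whereas you split $\langle x+_X|\lambda|y,j_{u,v}x\rangle$ directly; these are the same computation.
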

\begin{proof}
\begin{enumerate}
\item The conclusion is vacuously true for $x=0$. Thus assume $x\neq 0$ and let $\langle y,j_{u,v}x\rangle\geq 0$. Then we get
\begin{align*}
\norm{x}^2&=\langle x,j_{u,v}x\rangle\\
&=\langle x+\vert\lambda\vert y-\vert\lambda\vert y,j_{u,v}x\rangle \\
&=\langle x+\vert\lambda\vert y,j_{u,v}x\rangle -\vert\lambda\vert\langle y,j_{u,v}x\rangle\\
&\leq\langle x+\vert\lambda\vert y,j_{u,v}x\rangle\leq\norm{x+\vert\lambda\vert y}\norm{x}
\end{align*}
by $(J)$ and the quantifier-free extensionality rule. We have $\norm{x}\leq\norm{x+\vert\lambda\vert y}$ after dividing by $\norm{x}$.
\item By using $(A)$, we have $\langle u-v,j_{u,v}(x-y)\rangle\geq 0$ for $u\in Ax$ and $v\in Ay$. Then, we get $\norm{x-y}\leq\norm{x-y+\vert\lambda\vert(u-v)}$ by (1).
\end{enumerate}
\end{proof}

Therefore, the system $\widehat{\mathcal{V}}^\omega_p$ is an extension of $\mathcal{V}^\omega_p$ as all the axioms of $\mathcal{V}^\omega_p$ are provable in $\widehat{\mathcal{V}}^\omega_p$. In particular, all properties of $A$ and its resolvent exhibited in Lemma \ref{lem:basicProp} are provable in $\widehat{\mathcal{V}}^\omega_p$. Further, the system proves most of the basic facts about such duality selection mappings. One such fact that will be particularly useful later on is the following (proved, in passing, e.g.\ in the proof of Proposition 1.1 in Chapter I of \cite{Bar1976}):
\begin{proposition}\label{pro:DualMapTrick}
The system $\widehat{\mathcal{V}}^\omega_p$ proves:
\[
\forall x^X,y^X,u^X,v^X,t^{\mathbb{N}^\mathbb{N}}\left( t>_\mathbb{R}0\rightarrow \langle y,j_{u,v}x\rangle\leq_\mathbb{R}\norm{x}_X\frac{\norm{x+_Xty}_X-\norm{x}_X}{t}\right).
\]
\end{proposition}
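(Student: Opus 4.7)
The plan is to carry out a short algebraic manipulation in $\widehat{\mathcal{V}}^\omega_p$, parallel in spirit to the proof of the preceding proposition but now exploiting linearity in the first slot of $j_{u,v}$ to decompose $x+_Xty$ rather than $x$ itself. The overall idea is to express $\langle x+_Xty, j_{u,v}x\rangle$ in two different ways: once by expanding via linearity to isolate the term $t\cdot \langle y, j_{u,v}x\rangle$, and once by applying the boundedness clause of $(J)$.

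First, applying the linearity clause of axiom $(J)$ with $\alpha:=1$, $\beta:=t$, $z:=x$ and $w:=y$, one obtains
\[
\langle x+_Xty, j_{u,v}x\rangle =_\mathbb{R}\langle x, j_{u,v}x\rangle +_\mathbb{R}t\cdot\langle y, j_{u,v}x\rangle.
\]
Combining this with the first clause of $(J)$, namely $\langle x, j_{u,v}x\rangle =_\mathbb{R}\norm{x}_X^2$, rearranges to
\[
t\cdot\langle y, j_{u,v}x\rangle =_\mathbb{R}\langle x+_Xty, j_{u,v}x\rangle -_\mathbb{R}\norm{x}_X^2.
\]
Next, I would invoke the boundedness clause of $(J)$, i.e. $\vert\langle w, j_{u,v}x\rangle\vert\leq_\mathbb{R}\norm{x}_X\norm{w}_X$, applied with $w:=x+_Xty$, to get $\langle x+_Xty, j_{u,v}x\rangle\leq_\mathbb{R}\norm{x+_Xty}_X\cdot\norm{x}_X$. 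Substituting this estimate into the preceding equality yields
\[
t\cdot\langle y, j_{u,v}x\rangle\leq_\mathbb{R}\norm{x}_X\cdot\left(\norm{x+_Xty}_X-\norm{x}_X\right),
\]
and dividing by $t>_\mathbb{R}0$ delivers the claimed inequality.

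I do not anticipate any real obstacle here: the derivation is a faithful formalization of the standard textbook computation and uses only purely universal axioms of $\widehat{\mathcal{V}}^\omega_p$ together with quantifier-free extensionality for the arithmetic operations. The only mild subtlety concerns the formal treatment of division by $t$ in the finite-type encoding of the reals as fast-converging Cauchy sequences (cf.\ the footnote on reciprocals in Section~2.2), but this is handled by the ambient apparatus as soon as strict positivity of the denominator is available, which it is by hypothesis. In contrast to the proof of part~(1) of the preceding proposition, no case distinction on whether $x=_X0$ is required, since we divide only by $t$ and never by $\norm{x}_X$.
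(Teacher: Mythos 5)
Your proposal is correct and matches the paper's proof essentially verbatim: both expand $\langle x+_Xty,j_{u,v}x\rangle$ via the linearity and normalization clauses of $(J)$ to obtain $\norm{x}_X^2+t\langle y,j_{u,v}x\rangle$, bound it above by $\norm{x}_X\norm{x+_Xty}_X$ via the boundedness clause, and divide by $t>_\mathbb{R}0$. Your closing remark that, unlike part (1) of the preceding proposition, no case distinction on $x=_X0$ is needed is a nice observation but otherwise the argument is the same.
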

\begin{proof}
We have
\[
\norm{x}^2+t\langle y,j_{u,v}x\rangle=\langle x+ty,j_{u,v}x\rangle\leq\norm{x}\norm{x+ty}
\]
by axiom ($J$). This implies 
\[
\langle y,j_{u,v}x\rangle\leq\norm{x}\frac{\norm{x+ty}-\norm{x}}{t}.\qedhere
\]
\end{proof}

\subsection{The mapping $\langle\cdot,\cdot\rangle_s$}\label{sec:supremumIP}

Of crucial importance in the context of many proofs from the theory of nonlinear semigroups, and in particular in the context of the exemplary applications considered later in this paper, is the use of a function $\langle\cdot,\cdot\rangle_s:X\times X\to\overline{\mathbb{R}}$ defined by
\[
\langle y,x\rangle_s:=\mathrm{sup}\left\{\langle y,j\rangle\mid j\in J(x)\right\}.
\]
As already observed in the early papers \cite{Bre1971,CL1971b}, it is easy to see that $\langle y,x\rangle_s<+\infty$ for all $x,y\in X$ and in fact, since $J(x)$ is weak-star compact in $X^*$, the supremum is actually attained.

\medskip

While $\langle\cdot,\cdot\rangle_s$ is by virtue of its definition via the supremum and the duality map $J$ a complex object in the formal contexts considered in this paper, many proofs only rely on the existence of a mapping which shares some essential properties with $\langle\cdot,\cdot\rangle_s$ and in that case, such a mapping can indeed be treated in the context of the systems discussed above and this is what we want to briefly discuss in the following.

\medskip

Concretely, under the ``essential properties'' mentioned above we will understand the following:
\begin{enumerate}
\item $\langle \alpha y,\beta x\rangle_s=\alpha\beta\langle y,x\rangle_s$ for $x,y\in X$ and $\alpha,\beta\geq 0$;
\item $\langle\alpha x+y,x\rangle_s=\alpha\norm{x}^2+\langle y,x\rangle_s$ for $x,y\in X$ and $\alpha\in\mathbb{R}$;
\item $\vert\langle y,x\rangle_s\vert\leq\norm{y}\norm{x}$ for $x,y\in X$;
\item $\langle y,j_{u,v}x\rangle\leq\langle y,x\rangle_s$ for $x,y\in X$ and $u,v\in X$ where the $j_{u,v}$ are the selection functionals for $J$ guaranteed by accretivity;
\item $\langle\cdot,\cdot\rangle_s$ is upper semicontinuous (in its right argument).
\end{enumerate}
For a proof for the items (1), (2) and (5), see Proposition 1.2 in Chapter I of \cite{Bar1976}. The other items are immediate.

\medskip

If all that is required of $\langle\cdot,\cdot\rangle_s$ in a proof is that it fulfills these properties, then this proof can, under suitable uniformization of these assumptions, be treated in the context of the above systems by adding a further constant $\langle\cdot,\cdot\rangle_s$ of type $X\to (X\to \mathbb{N}^\mathbb{N})$ together with the following axioms: the items (1) -- (4) are readily formulated as
\begin{gather*}
\forall x^X,y^X,\alpha^{\mathbb{N}^\mathbb{N}},\beta^{\mathbb{N}^\mathbb{N}}\left( \langle \vert\alpha\vert y,\vert\beta\vert x\rangle_s=_\mathbb{R}\vert\alpha\vert\vert\beta\vert\langle y,x\rangle_s\right),\tag*{$(+)_1$}\\
\forall x^X,y^X,\alpha^{\mathbb{N}^\mathbb{N}}\left(\langle\alpha x+_Xy,x\rangle_s=_\mathbb{R}\alpha\norm{x}_X^2+\langle y,x\rangle_s\right),\tag*{$(+)_2$}\\
\forall x^X,y^X\left(\vert\langle y,x\rangle_s\vert\leq_\mathbb{R}\norm{y}_X\norm{x}_X\right),\tag*{$(+)_3$}\\
\forall x^X,y^X,u^X,v^X\left(\langle y,j_{u,v}x\rangle\leq_\mathbb{R}\langle y,x\rangle_s\right),\tag*{$(+)_4$}
\end{gather*}
in the underlying language. For a suitable formulation of item (5), note that the logical methodology based on the monotone Dialectica interpretation suggest that the assumption is upgraded to the existence of a ``modulus of uniform upper semicontinuity'' $\omega^+$. Concretely, we will consider an additional constant $\omega^+$ of type $\mathbb{N}\to (\mathbb{N}\to \mathbb{N})$ together with the axiom
\begin{gather*}
\forall x^X,y^X,z^X,b^\mathbb{N},k^\mathbb{N}\Big(\norm{x}_X,\norm{z}_X<_\mathbb{R} b\land\norm{x-_Xy}_X<_\mathbb{R} 2^{-\omega^+(b,k)}\tag*{$(+)_5$}\\
\rightarrow\langle z,y\rangle_s\leq_\mathbb{R}\langle z,x\rangle_s+2^{-k}\Big).
\end{gather*}

Note that by the uniformity on $x$ where the rate only depends on the upper bound $b$, this is actually a full modulus of uniform continuity.

\medskip

The assumption that $\langle\cdot,\cdot\rangle_s$ is uniformly continuous is in particular true if the space is uniformly smooth and will be in particular also be necessary if the proof to be treated in some form uses the extensionality of the functional $\langle\cdot,\cdot\rangle_s$ (in its right argument) as suggested by the logical methodology. However, if that is not the case and the proof can be formalized just using the axioms $(+)_1,\dots,(+)_4$, then the bound extraction theorem established later in particular guarantees a bound which is valid in all Banach spaces.

\medskip

Note also that accretivity is sometimes defined by explicitly using the functional $\langle\cdot,\cdot\rangle_s$ through stating that
\[
\forall (x,u),(y,v)\in A\left( \langle u-_Xv,x-_Xy\rangle_s\geq_\mathbb{R} 0\right).
\]
This version of accretivity is immediately provable in the system $\widehat{\mathcal{V}}^\omega_p+(+)_4$ as, using axioms $(A)$ and $(+)_4$, we have
\[
\langle u-v,x-y\rangle_s\geq\langle u-v,j_{u,v}(x-y)\rangle\geq 0.
\]

We later denote the collection of these five axioms $(+)_1$ -- $(+)_5$ by $(+)$. Now, the bound extraction results contained in Theorems \ref{thm:metatheoremC} and \ref{thm:metatheoremI} also extend to the associated extended system(s) $\widehat{\mathcal{V}}^\omega_p+(+)_1+\dots+(+)_4+((+)_5)$ with the conclusion drawn over any space (or where $\langle\cdot,\cdot\rangle_s$ is additionally uniformly continuous on bounded subsets as above if $(+)_5$ is included). Concretely, this follows as before since, for one, all the axiom schemes are purely universal and, for another, the constant $\langle\cdot,\cdot\rangle_s$ can be immediately majorized: from $\vert\langle y,x\rangle_s\vert\leq\norm{y}\norm{x}$, we as before infer $mn\geq\vert\langle y,x\rangle_s\vert$ for $m\geq\norm{y}$ and $n\geq\norm{x}$. From this, a majorant for the accompanying interpretation (using the extension of $(\cdot)_\circ$ as before) in the model $\mathcal{M}^{\omega,X}$ follows. Further, the additional constant $\omega^+$ is immediately majorized (essentially by itself) as it is of type $\mathbb{N}\to (\mathbb{N}\to\mathbb{N})$ and so, similar to Lemma 17.82 of \cite{Koh2008}, we have that $\omega^{+,M}$ defined by
\[
\omega^{+,M}(b,k):=\max\{\omega^+(a,j)\mid a\leq b,j\leq k\}
\]
is a majorant for $\omega^+$.

\section{Treating nonlinear semigroups generated by the Crandall-Liggett formula}

In this section, we now are concerned with a formal treatment of the semigroup $\mathcal{S}$ generated by the exponential formula as guaranteed by the result of Crandall and Liggett \cite{CL1971b} previously discussed in Theorem \ref{thm:CL}. Before diving into the formal treatment of these semigroups, we however need to consider some preliminary formal results for the treatment of $\overline{\mathrm{dom}A}$ (which features in the premise of the range condition in Theorem \ref{thm:CL}) as well as how $J^A_0$ is to be understood.

\subsection{The treatment of $\overline{\mathrm{dom}A}$}\label{sec:closureOfDomain}

Crucial both for the definition of the semigroup and for the central assumption of Theorem \ref{thm:CL}, i.e.\ the range condition, is the use of the closure of the domain of $A$ and in the following formal investigations, quantification over elements from $\overline{\mathrm{dom}A}$ will therefore be necessary. All the previous systems essentially only considered normed spaces and in that context, we now first have to lift the previous treatment to take the completeness of the underlying Banach space into account. For that, we are following the approach laid out in \cite{Koh2008} by which complete spaces are treated by adding another operator $C$ of type $(\mathbb{N}\to X)\to X$ which is meant to assign to a Cauchy sequence $x^{\mathbb{N}\to X}$ a limit $C(x)$. To discard of the complex premise of Cauchyness in an axiom stating that property, one then restricts oneself to Cauchy sequences with a fixed Cauchy rate (similar to the representation of real numbers in finite type arithmetic, see \cite{Koh2008}). To implicitly quantify only over all such sequences, a term construction $\widehat{x}$ is used on the objects $x^{\mathbb{N}\to X}$. Precisely, $\widehat{x}$ is defined on the level of the representation of the real value of the norm via sequences of rational numbers with fixed Cauchy rate via\footnote{We here follow the notion of \cite{Koh2008} and denote by $[a](k)$ the $k$-th element of the Cauchy sequence representation of the real number $a$.}
\[
\widehat{x}_n=_X\begin{cases}
x_n&\text{if }\forall k<_\mathbb{N} n\left([\norm{x_k-_Xx_{k+1}}_X](k+1)<_\mathbb{Q}6\cdot 2^{-k-1}\right),\\
x_k&\text{for }\min k<_\mathbb{N} n:[\norm{x_k-_Xx_{k+1}}_X](k+1)\geq_\mathbb{Q}6\cdot 2^{-k-1}\text{, otherwise}.
\end{cases}
\]
Then, completeness of the space can be formulated via the universal axiom\footnote{See the discussion in \cite{Koh2008} for the necessity of the additional $+3$ in the formulation.}
\[
\forall x^{\mathbb{N}\to X},k^\mathbb{N}\left(\norm{C(x)-_X\widehat{x}_k}_X\leq_\mathbb{R}2^{-k+3}\right)\tag{$\mathcal{C}$}
\]
which indeed implies completeness of the space in the form that from
\[
\forall k^\mathbb{N}\exists n^\mathbb{N}\forall m,\widetilde{m}\geq_\mathbb{N} n\left( \norm{x_m-_Xx_{\widetilde{m}}}_X<_\mathbb{R}2^{-k}\right)
\]
it follows provably in $\mathcal{A}^\omega[X,\norm{\cdot}]+(\mathcal{C})$ that 
\[
\forall k^\mathbb{N}\exists m^\mathbb{N}\forall l\geq_\mathbb{N} m\left(\norm{C(x)-_Xx_l}_X<_\mathbb{R}2^{-k+1}\right).
\]

As further shown in \cite{Koh2008}, the constant $C$ is majorizable and therefore we find that the bound extraction theorems discussed above immediately extend to $\widehat{\mathcal{V}}^\omega_p+(\mathcal{C})$ or any suitable extension (e.g.\ by $(+)$).

\medskip

Now a statement where one is quantifying over the closure of the domain, i.e.\ a statement of the form
\[
\forall x\in\overline{\mathrm{dom}A}\left( B(x)\right)\tag{$*$}
\]
can, through the use of $C$, be (naively) expressed as
\[
\forall x^{\mathbb{N}\to X}\left(\forall n^\mathbb{N}\exists y^X(y\in A\widehat{x}_n)\rightarrow B(C(x))\right).
\]
The premise that $x^{\mathbb{N}\to X}$ is a Cauchy sequence was removed through the use of $\widehat{x}$ and $C$ but the inclusion of the sequence in the domain, in the form of $\forall n^\mathbb{N}\exists y^X(y\in A\widehat{x}_n)$, remains, which is needed to specify that the limit of $\widehat{x}$, i.e.\ $C(x)$, is indeed an element of $\overline{\mathrm{dom}A}$.

The approach is now to also remove this assumption in a similar style as the $\widehat{\cdot}$-operation by universally quantifying over the potential witnessing sequence $y_n$ and defining a subsequent operation similar to $\widehat{\cdot}$ which potentially alters the sequence such that $x_n\in\mathrm{dom}A$ will always be guaranteed for any $n$. Concretely, for two objects $x,y$ of type $\mathbb{N}\to X$, we define
\[
(x\upharpoonright y)_n=_X\begin{cases}
x_n&\text{if }\forall k\leq_\mathbb{N} n\left( y_k\in Ax_k\right),\\
x_{k\dotdiv 1}&\text{for }\min k\leq_\mathbb{N} n:y_k\not\in Ax_k\text{, otherwise}.
\end{cases}
\]
Note that since inclusions in the graph of $A$ are quantifier-free, the above indeed can be defined by a closed term in the underlying language.

Now, using the operation $\upharpoonright$ in tandem with $\widehat{\cdot}$, we can implicitly quantify over elements from $\overline{\mathrm{dom}A}$ by quantifying over elements of type $\mathbb{N}\to X$ and thus we can express the statement ($*$) equivalently by
\[
\forall x^{\mathbb{N}\to X},y^{\mathbb{N}\to X}\left( y_0\in Ax_0\rightarrow B(C(x\upharpoonright y))\right).
\]
As a feasibility check for using $x\upharpoonright y$ to specify elements in $\overline{\mathrm{dom}A}$, note first that 
\[
\widehat{x\upharpoonright y}=_{\mathbb{N}\to X}\widehat{x}\upharpoonright y.
\]
To see this, one can consider a case distinction on whether $\widehat{x}=x$ holds or not and simultaneously on whether $x\upharpoonright y=x$ holds or not. We only consider the one case out of the four where $\widehat{x}\neq x$ and $x\upharpoonright y\neq x$. By definition, we then have a least $k$ such that $[\norm{x_k-x_{k+1}}](k+1)\geq_\mathbb{Q}6\cdot 2^{-k-1}$ as well as a least $j$ such that $y_j\not\in Ax_j$. Then, it immediately follows by definition of the operations as well as the minimality of $k$ and $j$ that
\begin{align*}
\widehat{x}\upharpoonright y&=(x_0,\dots,x_k,x_k,\dots)\upharpoonright y\\
&=(x_0,\dots,x_{\min\{k,j\dotdiv 1\}},x_{\min\{k,j\dotdiv 1\}},\dots)\\
&=(x_0,\dots,x_{j\dotdiv 1},x_{j\dotdiv 1},\dots)\,\widehat{\;}\\
&=\widehat{x\upharpoonright y}
\end{align*}
where, in the third line, we wrote $(x_0,\dots,x_{j\dotdiv 1},x_{j\dotdiv 1},\dots)\,\widehat{\;}\,$ for the operation $\widehat{\,\cdot\,}$ applied to the respective sequence.

Further, note that the premise $y_0\in Ax_0$ actually guarantees that $(\widehat{x}\upharpoonright y)_n\in\mathrm{dom}A$ for all $n$. For this, define  
\[
(x\upharpoonleft y)_n=_X\begin{cases}
y_n&\text{if }\forall k\leq_{\mathbb{N}} n\left( y_k\in Ax_k\right),\\
y_{k\dotdiv 1}&\text{for }\min k\leq_{\mathbb{N}} n:y_k\not\in Ax_k\text{, otherwise}.
\end{cases}
\]
Then clearly $y_0\in Ax_0$ implies $(\widehat{x}\upharpoonleft y)_n\in A((\widehat{x}\upharpoonright y)_n)$ for any $n$.

\subsection{Range conditions}

A treatment for some variants of the range conditions was already briefly discussed in \cite{Pis2022} where the particular case of
\[
\mathrm{dom}A\subseteq\bigcap_{\lambda>0}\mathrm{ran}(\mathrm{Id}+\lambda A)
\]
was studied. As discussed there, one can provide a formalized version of this range condition by making use of the resolvent in the form of the following sentence:
\[
\forall x^X,\lambda^{\mathbb{N}^\mathbb{N}}\left(x\in\mathrm{dom}A\land \lambda>_{\mathbb{R}}0\rightarrow \lambda^{-1}(x-_XJ^A_\lambda x)\in A(J^A_\lambda x)\right).
\]
This correctly expresses the range condition since stating that $x\in\mathrm{ran}(\mathrm{Id}+\lambda A)$ is equivalent to stating that $x\in\mathrm{dom}J^A_\lambda$ just via the definition of the resolvent. This latter statement is now equivalently formally encapsulated in our systems by stating the inclusion $\lambda^{-1}(x-_XJ^A_\lambda x)\in A(J^A_\lambda x)$. Note also that this axiom is in particular purely universal and thus can be used in the bound extraction theorems.

\medskip

In the following, we want to consider two modifications: (1) we want to specify that the inclusion is valid even for the closure of the domain; (2) we want to restrict the intersection to $\lambda<\lambda_0$ for some real parameter $\lambda_0>0$. The use of such a $\lambda_0$ can be facilitated by adding two further constants and an axiom: $\lambda_0$ of type $\mathbb{N}^\mathbb{N}$ and $m_{\lambda_0}$ of type $\mathbb{N}$ together with the accompanying axiom $\lambda_0\geq_\mathbb{R} 2^{-m_{\lambda_0}}$ providing a verifier to $\lambda_0>0$. Note that the bound extraction results stay valid in the context of such an extension if one additionally requires the parameter $n$ from Theorem \ref{thm:metatheoremC} to satisfy $n\geq\vert\lambda_0\vert,m_{\lambda_0}$.

\medskip

In the context of such additional constants, the above range condition can be immediately modified to represent the restricted range condition
\[
\mathrm{dom}A\subseteq\bigcap_{\lambda_0>\lambda>0}\mathrm{ran}(\mathrm{Id}+\lambda A)
\]
by considering
\[
\forall x^X,\lambda^{\mathbb{N}^\mathbb{N}}\left(x\in\mathrm{dom}A\land \lambda_0>_\mathbb{R}\lambda>_\mathbb{R} 0\rightarrow \lambda^{-1}(x-_XJ^A_\lambda x)\in A(J^A_\lambda x)\right).
\]

Further, in both cases we can now consider the other main modification of stipulating the range condition also for the closure of the domain, i.e.
\[
\overline{\mathrm{dom}A}\subseteq\bigcap_{\lambda_0>\lambda>0}\mathrm{ran}(\mathrm{Id}+\lambda A),
\]
by using the above treatment of quantification over elements in the closure of the domain by quantification over sequences in $X$ together with the operators $C$ and $(\cdot\upharpoonright\cdot)$. Concretely, one rather immediately obtains the following natural extension to the closure of the domain:
\begin{gather*}
\forall x^{\mathbb{N}\to X},v^{\mathbb{N}\to X},\lambda^{\mathbb{N}^\mathbb{N}}\Big(v_0\in Ax_0\land \lambda_0>_\mathbb{R}\lambda>_\mathbb{R} 0\tag*{$(RC)_{\lambda_0}$}\\
\rightarrow \lambda^{-1}(C(x\upharpoonright v)-_XJ^A_\lambda(C(x\upharpoonright v)))\in A(J^A_\lambda(C(x\upharpoonright v)))\Big).
\end{gather*}
Similarly, we could here lift the restriction via $\lambda_0$ again and get a full range condition for the closure of the domain. We denoted this full range condition for the closure of the domain by $(RC)$, but at the same time refrain from spelling this out in any more detail here. Note however that all the other range conditions introduced here are still purely universal and thus are admissible in the context of the bound extraction theorems.\\

Further, note that e.g.\ from $(RC)_{\lambda_0}$, the statement 
\[
\forall x^X,\lambda^{\mathbb{N}^\mathbb{N}}\left(x\in\mathrm{dom}A\land \lambda_0>_\mathbb{R}\lambda>_\mathbb{R} 0\rightarrow \lambda^{-1}(x-_XJ^A_\lambda x)\in A(J^A_\lambda x)\right)
\]
is provable: if $x\in\mathrm{dom}A$ with $v\in Ax$, consider the constant-$x$ and constant-$v$ sequences $\overline{x}$ and $\overline{v}$, respectively. Then clearly $(\overline{x}\upharpoonright \overline{v})_n=_Xx$ for any $n$ and thus provably $C(\overline{x}\upharpoonright \overline{v})=_Xx$ by $(\mathcal{C})$. The statement $(RC)_{\lambda_0}$ yields
\[
\lambda^{-1}(C(\overline{x}\upharpoonright \overline{v})-_X J^A_\lambda(C(\overline{x}\upharpoonright \overline{v}))) \in A(J^A_\lambda(C(\overline{x}\upharpoonright \overline{v})))
\]
for $\lambda_0> \lambda> 0$ and the quantifier-free extensionality rule (as $v\in Ax$ is quantifier-free) yields $\lambda^{-1}(x-_XJ^A_\lambda x)\in A(J^A_\lambda x)$.\\

In the following remark, we lastly collect some subtleties regarding the extension of the metatheorems to systems with these types of axioms.

\begin{remark}\label{rem:modelRem}
The metatheorems exhibited in Theorems \ref{thm:metatheoremC} and \ref{thm:metatheoremI} require as an assumption that $\bigcap_{\lambda>0}\mathrm{dom}J^A_\lambda\neq\emptyset$, a requirement which would be substantiated via a full range condition together with a witness for $\mathrm{dom}A\neq\emptyset$ (which was previously, in some sense but not precisely, represented by $c_X$). In the context of the above restricted range conditions, it is however feasible that $\bigcap_{\lambda>0}\mathrm{dom}J^A_\lambda$ is actually empty while only $\bigcap_{\lambda_0>\lambda>0}\mathrm{dom}J^A_\lambda\neq\emptyset$ holds. It should be noted that in this case, Theorems \ref{thm:metatheoremC} and \ref{thm:metatheoremI} can be modified to stay valid if $c_X$ is interpreted by a point in this restricted intersection. Therefore, if we in the following write $\widehat{\mathcal{V}}^\omega_p+(\mathcal{C})+(RC)_{\lambda_0}$ or consider any extension, we consider the axioms (IV) and (V) to be replaced by
\begin{enumerate}
\item[(IV)$'$] $\lambda_0-2^{m'_{\widetilde{\gamma}}}\geq_\mathbb{R}\widetilde{\gamma}\geq_\mathbb{R}2^{-m_{\widetilde{\gamma}}}$,
\item[(V)$'$] $d_X\in Ac_X$,
\end{enumerate}
where $d_X$ is a new constant of type $X$ and $m'_{\widetilde{\gamma}}$ is a new constant of type $\mathbb{N}$, the latter witnessing that $\lambda_0>\widetilde{\gamma}$. The majorization of all resolvents $J^A_\gamma$ for $\gamma\in (0,\lambda_0)$ is then achieved similar to \cite{Pis2022} via
\begin{align*}
\norm{J^A_\gamma x}&\leq \norm{x}+2\norm{c_X}+\left(2+\frac{\gamma}{\widetilde{\gamma}}\right)\norm{c_X-J^A_{\widetilde{\gamma}}c_X}\\
&\leq \norm{x}+2\norm{c_X}+\left(2\widetilde{\gamma}+\gamma\right)\norm{d_X}.
\end{align*}
In that case however, the interpretation of the resolvent constant $J^{\chi_A}$ in the models $\mathcal{M}^{\omega,X}$ and $\mathcal{S}^{\omega,X}$ has to be modified to set $[J^{\chi_A}]_\mathcal{M}(\gamma,x)=0$ for all $x$ if $\gamma\geq_\mathbb{R}\lambda_0$ (and similar for $\mathcal{S}^{\omega,X}$). Therefore, the extracted bounds only remain meaningful if the theorem does not utilize these resolvents. If it does, further modifications are necessary but we refrain from discussing this here any further as this situation does not arise in this paper.
\end{remark}

\subsection{The resolvent at zero}\label{sec:resAtZero}

Something left open by the axioms characterizing the resolvent, as discussed in the preliminaries, is the behavior of $J^A_0$. This, however, takes a special role in the context of the treatment of nonlinear semigroups $S$ generated by the associated operator $A$ due to the prominent use often made of $S(0)$.

The reason for this previous ambiguity in the treatment of the resolvent at $0$ was the fact that the resolvent does not always behave continuously at $0$ if it is naively defined: while the definition of the resolvent via
\[
J^A_\gamma=(\mathrm{Id}+\gamma A)^{-1}
\]
suggests $J^A_0x=x$, it is well known (see \cite{BC2017}) that already in Hilbert spaces with a maximally monotone operator $A$, one has $J^A_tx\to P_{\overline{\mathrm{dom}A}}x$ for $t\to 0$ and all $x\in\mathrm{dom}J^A_t$. Therefore, extensionality for the constant $J^{\chi_A}$ in its first argument $t$ at $0$ can in general not be expected if $J^A_0$ is defined in this way and the previous axiomatization left the definition of $J^A_0$ open.

\medskip

In the following, we nevertheless consider the set of axioms discussed previously forming $\widehat{\mathcal{V}}^\omega_p$ to actually be extended with the sixth axiom
\begin{enumerate}
\item[(VI)] $\forall x^X\left(J^A_0x=_Xx\right)$,
\end{enumerate}
stating the defining equality $J^A_0=(\mathrm{Id}+0 A)^{-1}=\mathrm{Id}$.

\medskip

Now, the above result that $J^A_tx\to P_{\overline{\mathrm{dom}A}}x$ for $t\to 0$ extends to Banach spaces at least partially in the sense that one can show (see Proposition 3.2 of Chapter II in \cite{Bar1976}) that $J^A_tx\to x$ for $\lambda_0> t\to 0$ and 
\[
x\in\overline{\mathrm{dom}A}\cap\bigcap_{\lambda_0>\lambda>0}\mathrm{dom}J^A_\lambda.
\]
Therefore, in the presence of a range condition, we should at least have a continuous and thus extensional behavior of the resolvent defined in this manner at $t=0$ for all $x\in\overline{\mathrm{dom}A}$ and this can indeed be formally verified in the accompanying system.

\begin{lemma}
$\widehat{\mathcal{V}}^\omega_p+(\mathcal{C})+(RC)_{\lambda_0}$ proves:
\begin{gather*}
\forall x^{\mathbb{N}\to X},v^{\mathbb{N}\to X},\lambda^{\mathbb{N}^\mathbb{N}},k^\mathbb{N}\Bigg(v_0\in Ax_0\land 0<_\mathbb{R}\lambda<_\mathbb{R} \min\left\{\frac{2^{-(k+1)}}{\max\{1,\norm{(\widehat{x}\upharpoonleft v)_{k+5}}_X\}},\lambda_0\right\}\\
\to \norm{C(x\upharpoonright v)-_XJ^A_\lambda C(x\upharpoonright v)}_X\leq_\mathbb{R} 2^{-k}\Bigg).
\end{gather*}
\end{lemma}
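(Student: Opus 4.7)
The plan is to quantitatively refine the classical fact that $J^A_\lambda y \to y$ as $\lambda \to 0^+$ for $y \in \overline{\mathrm{dom}A}$ (Proposition 3.2 of Chapter II in \cite{Bar1976}), formalized via the machinery of Section \ref{sec:closureOfDomain}. Writing $z := C(x \upharpoonright v)$, I would prove the bound $\|z - J^A_\lambda z\|_X \leq 2^{-k}$ by the standard triangle-inequality strategy: approximate $z$ by an element of $\mathrm{dom}A$ equipped with an explicit $A$-witness, then balance the approximation error against the estimate $\|y - J^A_\lambda y\|_X \leq \lambda \norm{w}_X$ provided by Lemma \ref{lem:basicProp}(8).

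Concretely, I would first use the identity $\widehat{x \upharpoonright v} = \hat{x} \upharpoonright v$ recorded in Section \ref{sec:closureOfDomain} together with the completeness axiom $(\mathcal{C})$ to obtain
\[
\norm{z - (\hat{x} \upharpoonright v)_{k+5}}_X \leq 2^{-(k+5)+3} = 2^{-(k+2)}.
\]
Set $y := (\hat{x} \upharpoonright v)_{k+5}$. Inspecting the definition of $\upharpoonright$ under the hypothesis $v_0 \in Ax_0$, the element $y$ lies in $\mathrm{dom}A$ with an explicit witness $v_\ell \in Ay$ for some $\ell \leq k+5$; in the principal case where the $\upharpoonright$-conditional succeeds up to $j = k+5$, one has $y = \hat{x}_{k+5}$ and $\ell = k+5$. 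The range condition $(RC)_{\lambda_0}$ then places both $z$ and $y$ into $\mathrm{dom}(J^A_\lambda)$ for $0 < \lambda < \lambda_0$, and combining the nonexpansivity of $J^A_\lambda$ (Lemma \ref{lem:basicProp}(3)) with Lemma \ref{lem:basicProp}(8) yields
\[
\norm{z - J^A_\lambda z}_X \leq 2\norm{z - y}_X + \lambda \norm{v_\ell}_X \leq 2^{-(k+1)} + \lambda \norm{v_{k+5}}_X \leq 2^{-k},
\]
where the last inequality invokes the hypothesis $\lambda \cdot \max\{1, \norm{v_{k+5}}_X\} < 2^{-(k+1)}$.

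The main obstacle will be the careful formal bookkeeping of the $\hat{\cdot}$ and $\upharpoonright$ operations, especially in the \emph{truncated} case where the $\upharpoonright$-conditional fails at some minimal $m \leq k+5$: here the sequence $\hat{x} \upharpoonright v$ is constant at $\hat{x}_{m-1}$ from index $m-1$ onward, so $z = y = \hat{x}_{m-1}$ is itself already in $\mathrm{dom}A$ with $\norm{z - y}_X = 0$ and witness $v_{m-1}$, and one must verify that the bound still closes in this degenerate case. Proper formal handling of this case analysis, together with a formal verification of the identity $\widehat{x \upharpoonright v} = \hat{x} \upharpoonright v$ within $\widehat{\mathcal{V}}^\omega_p + (\mathcal{C}) + (RC)_{\lambda_0}$, is the delicate bookkeeping step of the proof.
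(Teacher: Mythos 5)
Your approach mirrors the paper's: both pass from $z := C(x\upharpoonright v)$ to a nearby element of $\mathrm{dom}A$ at stage $n = k+5$ using axiom $(\mathcal{C})$ together with the nonexpansivity of $J^A_\lambda$ (which gives the factor $2$), and then invoke Lemma~\ref{lem:basicProp}(8) to bound $\norm{\cdot -_X J^A_\lambda(\cdot)}_X$ by $\lambda$ times the witness norm, closing with $2\cdot 2^{-(k+2)} + \lambda\norm{v_{k+5}}_X < 2^{-(k+1)} + 2^{-(k+1)} = 2^{-k}$. The one genuine difference is that the paper dispatches the $\upharpoonright$-bookkeeping with a curt ``w.l.o.g.\ $v_n \in A\hat{x}_n$ for all $n$'', under which $\hat{x}\upharpoonright v = \hat{x}$ and the witness at stage $k+5$ is simply $v_{k+5}$ itself, whereas you attempt to unpack that reduction explicitly.

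Your concern about the truncated case is not just bookkeeping: as you have written it the argument does not close there, and the paper's w.l.o.g.\ conceals exactly the same issue. If the $\upharpoonright$-condition first fails at some $m$ with $1 \leq m \leq k+5$, then $z = \hat{x}_{m-1}$ and the only available witness for $z \in \mathrm{dom}A$ is $v_{m-1}$, so the Yosida estimate gives $\lambda\norm{v_{m-1}}_X$; but the hypothesis constrains $\lambda$ only through $\norm{v_{k+5}}_X$, and nothing forces $\norm{v_{m-1}}_X \leq \norm{v_{k+5}}_X$ (e.g.\ $v_n = 0$ for $n \geq 1$ makes $\norm{v_{k+5}}_X = 0$ while $\norm{v_0}_X$ may be arbitrarily large, so that $\lambda < 2^{-(k+1)}$ alone does not control $\lambda\norm{v_0}_X$). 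To close the truncated case uniformly one should either replace $\norm{v_{k+5}}_X$ in the hypothesis by $\max_{n\leq k+5}\norm{v_n}_X$ (which is what a careful extraction would produce), or read the lemma as implicitly restricted to pairs with $v_n \in A\hat{x}_n$ for all $n$, i.e., exactly the paper's w.l.o.g. Since the lemma is only ever invoked in that admissible situation this is harmless downstream, but your instinct to flag it is correct; you should not leave it as an unverified ``delicate bookkeeping step'' but state the needed strengthening of the hypothesis.
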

\begin{proof}
First, by Lemma \ref{lem:basicProp}, we have
\[
\forall x^X,v^X,\lambda^{\mathbb{N}^\mathbb{N}}\left(0<_\mathbb{R}\lambda<_\mathbb{R}\lambda_0\land v\in Ax\rightarrow \norm{x-J^A_\lambda x}\leq\lambda\norm{v}\right)
\]
as using $(RC)_{\lambda_0}$ and the quantifier-free extensionality rule, we obtain $x\in\mathrm{dom}J^A_\lambda$ for all $\lambda\in (0,\lambda_0)$. So, for $x^{\mathbb{N}\to X}$ and $v^{\mathbb{N}\to X}$ such that $v_0\in Ax_0$, we get $C(x\upharpoonright v)\in\mathrm{dom}J^A_\lambda$ for all $\lambda\in (0,\lambda_0)$ again by $(RC)_{\lambda_0}$. Therefore, using $(\mathcal{C})$ and the nonexpansivity of the resolvent on its domain:
\begin{align*}
\norm{C(x\upharpoonright v)-J^A_\lambda C(x\upharpoonright v)}&\leq \norm{C(x\upharpoonright v)-(\widehat{x}\upharpoonright v)_n}+\norm{(\widehat{x}\upharpoonright v)_n-J^A_\lambda(\widehat{x}\upharpoonright v)_n}\\
&\qquad+\norm{J^A_\lambda(\widehat{x}\upharpoonright v)_n-J^A_\lambda C(x\upharpoonright v)}\\
&\leq 2\norm{C(x\upharpoonright v)-(\widehat{x}\upharpoonright v)_n}+\norm{(\widehat{x}\upharpoonright v)_n-J^A_\lambda(\widehat{x}\upharpoonright v)_n}\\
&\leq 2\cdot 2^{-n+3}+\lambda\norm{(\widehat{x}\upharpoonleft v)_n}.
\end{align*}
Choosing $n=k+5$, we get that for $\lambda\leq 2^{-(k+1)}/\max\{1,\norm{(\widehat{x}\upharpoonleft v)_{k+5}}\}$:
\[
\norm{C(x\upharpoonright v)-J^A_\lambda C(x\upharpoonright v)}\leq 2^{-k}.\qedhere
\]
\end{proof}

This property will be sufficient in the following as the semigroup operates only on $\overline{\mathrm{dom}A}$.

\subsection{The semigroup}

For treating the semigroup on $\overline{\mathrm{dom}A}$ from Theorem \ref{thm:CL}, it is very instructive to first consider the operator $S$ solely on $\mathrm{dom}A$. In that case, we can facilitate a treatment by directly adding a further constant $S$ of type $\mathbb{N}^\mathbb{N}\to (X\to X)$ to the underlying language together with an axiom stating that $S$ on $\mathrm{dom}A$ arises from the Crandall-Liggett formula, i.e.\ that 
\[
S(t)x=\lim_{n\to\infty}\left(\mathrm{Id}+\frac{t}{n}A\right)^{-n}x
\]
for any $x\in\mathrm{dom}A$. This can be achieved by further adding a constant $\omega^S$ of type $\mathbb{N}\to (\mathbb{N}\to (\mathbb{N}\to \mathbb{N}))$ together with the axiom
\begin{gather*}
\forall k^\mathbb{N},b^\mathbb{N},T^\mathbb{N},x^X,v^X,t^{\mathbb{N}^\mathbb{N}}\bigg( v\in Ax\land \norm{x}_X,\norm{v}_X<_\mathbb{R}b\land \vert t\vert<_\mathbb{R}T\tag{S1}\\
\rightarrow\forall n\geq_\mathbb{N}\omega^S(k,b,T)\left(\vert t\vert/n<_\mathbb{R}\lambda_0\rightarrow \norm{S(\vert t\vert)x-_X(J^A_{\vert t\vert/n})^nx}_X\leq_\mathbb{R}2^{-k}\right)\bigg),
\end{gather*}
expressing that $\omega^S$ represents a rate of convergence uniform for elements $x$ from bounded subsets $B_b(0)\cap\mathrm{dom}A$ and uniform in $t$ for bounded intervals $[0,T]$ (where we use the absolute value to disperse of the universal premise $t\geq 0$). The term $(J^A_{\vert t\vert/n})^n$ used here is a shorthand for a term $I(t)(n)(n)$ where $I(t)(m)$ is a closed term of type $\mathbb{N}\to (X\to X)$ defined using the recursors of the underlying language of $\mathcal{A}^\omega[X,\norm{\cdot}]$ (see \cite{Koh2008}) via $I(t)(m)(0)=\lambda x.x$ and $I(t)(m)(n+1)=\lambda x.(J^A_{t/m}(I(t)(m)(n)(x)))$.\footnote{We consider $I(t)(m)$ to be trivially defined at $m=0$} Note also that we in particular treat $S(0)x$ via $J^A_0x$ by using the absolute value $\vert t\vert$ in the above formula to implicitly quantify over non-negative real numbers.

\medskip

Such a use of a rate of convergence is in particular justified by the fact that the proof given in \cite{CL1971b} of the Cauchy-property of the sequence $(J^A_{t/n})^nx$ for given $t>0$ and $x\in\mathrm{dom}A$ can be immediately recognized to be provable in the system $\widehat{\mathcal{V}}^\omega_{i,p}+(\mathcal{C})+(RC)_{\lambda_0}$ (naturally defined similar to $\widehat{\mathcal{V}}^\omega_p+(\mathcal{C})+(RC)_{\lambda_0}$, just over $\mathcal{A}^\omega_i[X,\norm{\cdot}]$ instead of $\mathcal{A}^\omega[X,\norm{\cdot}]$). Therefore, the extension of the semi-constructive metatheorem (Theorem \ref{thm:metatheoremI}) to this system guarantees the existence of a rate of Cauchyness for $(J^A_{t/n})^nx$ and consequently the existence of a modulus $\omega^S$ as characterized by the above axiom which can moreover be extracted from the proof given in \cite{CL1971b} (which is in fact rather immediate and was essentially already observed in \cite{CL1971b}): one can (formally) show that given $x\in\mathrm{dom}A$ with witness $v\in Ax$ and $t\geq 0$, we have
\[
\norm{(J^A_{t/n})^nx-(J^A_{t/m})^mx}\leq 2t\left\vert\frac{1}{m}-\frac{1}{n}\right\vert^{1/2}\norm{v}.
\]
Thus for $T> t$ and $b>\norm{v}$, we have for a given $\varepsilon>0$ that for any $m\geq n\geq\ceil*{\frac{4T^2b^2}{\varepsilon^2}}$:
\begin{align*}
\norm{(J^A_{t/n})^nx-(J^A_{t/m})^mx}&\leq 2Tb\left\vert\frac{1}{m}-\frac{1}{n}\right\vert^{1/2}\\
&\leq 2Tb\frac{1}{\sqrt{n}}\\
&\leq 2Tb\frac{1}{\sqrt{\ceil*{\frac{4T^2b^2}{\varepsilon^2}}}}\\
&\leq \varepsilon.
\end{align*}
Thus the mapping
\[
\omega^S(k,b,T)=2^{2k+2}T^2b^2
\]
is a possible choice for the rate of convergence\footnote{Note that although the function is exponential in $k$, this is just due to requiring an error of the form $2^{-k}$. Abstracting $\varepsilon=2^{-k}$, the rate is actually quadratic in $1/\varepsilon$.} in the exponential formula as derived from the proof and the upper bound $b$ is here actually even independent of $\norm{x}$.

\medskip

Now, the treatment of the extension of $S$ to $\overline{\mathrm{dom}A}$ is best motivated by considering how it is usually defined in the literature: $S(t)$ as a mapping $\mathrm{dom}A\to X$ is nonexpansive and thus (uniformly) continuous. The object $S(t)x$ for $x\in\overline{\mathrm{dom}A}$ is then defined by considering that as $x\in\overline{\mathrm{dom}A}$, there exists a sequence $x_n\to x$ with $x_n\in\mathrm{dom}A$. By convergence, the sequence $x_n$ is Cauchy and by continuity of $S(t)$, the sequence $S(t)x_n$ is Cauchy as well and thus converges in a Banach space by completeness. Then $S(t)x$ is identified with the limit of that sequence. This crucial use of the completeness of the space prompts us to work in the context of the formal treatment of complete spaces and $\overline{\mathrm{dom}A}$ as discussed before.

In that vein, we now want to provide an axiom classifying the behavior of $S(t)$ for elements of $\overline{\mathrm{dom}A}$ by essentially stating that for any $x$ and any Cauchy sequence $x_n\to x$ with $x_n\in\mathrm{dom}A$, $S(t)x_n$ converges to $S(t)x$. The quantification over all elements of $\overline{\mathrm{dom}A}$ together with their generating sequences can now be achieved as discussed in Section \ref{sec:closureOfDomain} and in that way, the axiom stating the resulting behavior for $S(t)x$ then takes the form of the following universal axiom\footnote{Note again that the additional $+3$ is included here as the axiom $(\mathcal{C})$ requires this modification in order to have a model as discussed before and the same rate applies to the semigroup-images here as the semigroup is nonexpansive.}
\begin{gather*}
\forall x^{\mathbb{N}\to X}, y^{\mathbb{N}\to X}, t^{\mathbb{N}^\mathbb{N}}\big(y_0\in Ax_0\tag{S2}\\
\rightarrow \forall n^\mathbb{N}\left(\norm{S(\vert t\vert)(C(x\upharpoonright y))-_XS(\vert t\vert)((\widehat{x}\upharpoonright y)_n)}_X\leq_\mathbb{R} 2^{-n+3}\right)\big).
\end{gather*}
Note again that the behavior of $S(0)$ is implicitly characterized by the above axioms through the use of $\vert t\vert$. We write $(S)$ for $(S1)+(S2)$ as well as $H^\omega_p$ for $\widehat{\mathcal{V}}^\omega_p+(\mathcal{C})+(RC)_{\lambda_0}+(S)$ (noting again the additional axioms from Remark \ref{rem:modelRem} and Section \ref{sec:resAtZero}).

\medskip

Now, the above axioms forming the theory $H^\omega_p$ are suitable for formalizing large portions on the theory of nonlinear semigroups as generated by the Crandal-Liggett formula and as a sort of litmus test, we at least provide here sketches of formal proofs in the resulting system of the other main semigroup properties which arise pretty much directly by formalizing the proofs given in \cite{CL1971b}. For that, however, some careful consideration for iterations of the semigroup map are required here. Concretely, to make expressions like $S(t)S(s)x$ meaningful, we have to consider how $S(s)x\in\overline{\mathrm{dom}A}$ is reflected in the system. Based on the representation of $\overline{\mathrm{dom}A}$ chosen above (which also features in how the extension of $S$ is formally defined by means of the axiom (S2)) we thus first have to see how $S(\vert t\vert)C(x)$ with $x_n\in\mathrm{dom}A$ for all $n$ can be expressed as an element of the form $C(u)$ for $u^{\mathbb{N}\to X}$ such that $u_n\in\mathrm{dom}A$ for all $n$. To find such a $u$, note first that the convergence result encoded by (S1) for elements from $\mathrm{dom}A$ extends by means of (S2) to $\overline{\mathrm{dom}A}$ in the following way: provably in $H^\omega_p$, we have
\begin{gather*}
\forall x^{\mathbb{N}\to X}, y^{\mathbb{N}\to X}, t^{\mathbb{N}^\mathbb{N}}, k^\mathbb{N}\exists N^\mathbb{N}\forall n\geq_\mathbb{N} N\bigg(y_0\in Ax_0\land \vert t\vert/n<_\mathbb{R}\lambda_0\\
\rightarrow \norm{S(\vert t\vert)(C(x\upharpoonright y))-_X\left(J^A_{\vert t\vert/n}\right)^n(C(x\upharpoonright y))}_X\leq_\mathbb{R}2^{-k}\bigg)
\end{gather*}
where moreover (although we avoid spelling this out here) the choice functional for $N$ can be explicitly given by closed terms build up from $\omega^S$ (and the other constants). To see the provability of the above statement, let $k,x,y,t$ be arbitrary with $y_0\in Ax_0$. Then using nonexpansivity of the semigroup and the resolvent (see item (4) of the following Lemma \ref{lem:semigroupprop}\footnote{The first four items of this lemma in particular do not rely on this construction as it will only become necessary in the fifth item. Thus, there is no circularity induced by this construction.}), we have
\begin{align*}
&\norm{S(\vert t\vert)(C(x\upharpoonright y))-\left(J^A_{\vert t\vert/n}\right)^n(C(x\upharpoonright y))}\\
&\qquad\leq\norm{S(\vert t\vert)(C(x\upharpoonright y)) -S(\vert t\vert)((\widehat{x}\upharpoonright y)_{(k+5)})}\\
&\qquad\qquad+\norm{S(\vert t\vert)((\widehat{x}\upharpoonright y)_{(k+5)})-\left(J^A_{\vert t\vert/n}\right)^n((\widehat{x}\upharpoonright y)_{(k+5)})}\\
&\qquad\qquad+\norm{\left(J^A_{\vert t\vert/n}\right)^n((\widehat{x}\upharpoonright y)_{(k+5)})-\left(J^A_{\vert t\vert/n}\right)^n(C(x\upharpoonright y))}\\
&\qquad\leq\norm{C(x\upharpoonright y)-(\widehat{x}\upharpoonright y)_{(k+5)}}\\
&\qquad\qquad+\norm{S(\vert t\vert)((\widehat{x}\upharpoonright y)_{(k+5)})-\left(J^A_{\vert t\vert/n}\right)^n((\widehat{x}\upharpoonright y)_{(k+5)})}\\
&\qquad\qquad+\norm{(\widehat{x}\upharpoonright y)_{(k+5)}-C(x\upharpoonright y)}\\
&\qquad\leq 2^{-k-1}+\norm{S(\vert t\vert)((\widehat{x}\upharpoonright y)_{(k+5)})-\left(J^A_{\vert t\vert/n}\right)^n((\widehat{x}\upharpoonright y)_{(k+5)})}\\
&\qquad\leq 2^{-k}
\end{align*}
for any $n$ large enough such that $\vert t\vert/n<\lambda_0$ as well as 
\[
\norm{S(\vert t\vert)((\widehat{x}\upharpoonright y)_{(k+5)})-\left(J^A_{\vert t\vert/n}\right)^n((\widehat{x}\upharpoonright y)_{(k+5)})}\leq 2^{-(k+1)}
\]
which can be achieved via (S1). In that way, writing $N_{t,x,y}$ also for the choice functionals for the quantifier over $N$ in the above statement, we find that $S(\vert t\vert)C(x\upharpoonright y)$ is provably $=_X$-equal to
\[
C\left(\left(\left( J^A_{\vert t\vert/N_{t,x,y}(k)}\right)^{N_{t,x,y}(k)}C(x\upharpoonright y)\right)_k\right).
\]
We write $\overline{S(\vert t\vert)C(x\upharpoonright y)}$ in the following for this expression (where one should note again that the $N$-functionals can be explicitly computed, albeit being somewhat messy). In particular note that 
\[
\left( J^A_{\vert t\vert/N_{t,x,y}(k)}\right)^{N_{t,x,y}(k)}C(x\upharpoonright y)\in\mathrm{dom}A
\]
with the witnessing terms defined in terms of the Yosida approximates (which follows provably from (RC)$_{\lambda_0}$ if we w.l.o.g.\ assume that the functionals $N$, for a given $t$ as a parameter, are large enough such that $\vert t\vert/N_{t,x,y}(k)<\lambda_0$). In that way, $S(\vert t\vert)S(\vert s\vert )C(x)$ can be meaningfully represented by
\[
S(\vert t\vert)\overline{S(\vert s\vert)C(x\upharpoonright y)}=_X\overline{S(\vert t\vert)\overline{S(\vert s\vert)C(x\upharpoonright y)}}.
\]
Note that the system can nevertheless \emph{not} prove that
\[
S(\vert t\vert)S(\vert s\vert)C(x\upharpoonright y)=_XS(\vert t\vert)\overline{S(\vert s\vert)C(x\upharpoonright y)}
\] 
and so the latter is, in some sense, the only way to talk about iterations meaningfully.\\

We now get to the main properties of nonexpansive semigroups:

\begin{lemma}\label{lem:semigroupprop}
The following are provable in $H^\omega_p$:
\begin{enumerate}
\item $\forall x^X,y^X,t^{\mathbb{N}^\mathbb{N}},s^{\mathbb{N}^\mathbb{N}}\left(y\in Ax\rightarrow \norm{S(\vert t\vert)x-_XS(\vert s\vert)x}_X\leq_\mathbb{R} 2\vert \vert t\vert-\vert s\vert\vert\norm{y}_X\right)$.
\item $\begin{cases}\forall x^X,y^X,t^{\mathbb{N}^\mathbb{N}}\big(x\in\mathrm{dom}A\land y\in\mathrm{dom}A\\
\qquad\rightarrow \norm{S(\vert t\vert)x-_XS(\vert t\vert)y}_X\leq_\mathbb{R}\norm{x-_Xy}_X\big).\end{cases}$
\item $\begin{cases}\forall x^{\mathbb{N}\to X},v^{\mathbb{N}\to X},t^{\mathbb{N}^\mathbb{N}},s^{\mathbb{N}^\mathbb{N}}\Big(v_0\in Ax_0\\
\qquad\land \vert \vert t\vert -\vert s\vert\vert\leq_\mathbb{R} 2^{-(k+2)}/\max\{1,\norm{(\widehat{x}\upharpoonleft v)_{k+5}}\}\\
\qquad\qquad\rightarrow \norm{S(\vert t\vert )C(x\upharpoonright v)-_XS(\vert s\vert)C(x\upharpoonright v)}_X\leq_\mathbb{R} 2^{-k}\Big).\end{cases}$
\item $\begin{cases}\forall x^{\mathbb{N}\to X}, v^{\mathbb{N}\to X}, y^{\mathbb{N}\to X}, w^{\mathbb{N}\to X},t^{\mathbb{N}^\mathbb{N}}\Big(v_0\in Ax_0\land w_0\in Ay_0\\
\qquad\rightarrow \norm{S(\vert t\vert)(C(x\upharpoonright v))-_XS(\vert t\vert)(C(y\upharpoonright w))}_X\\
\qquad\qquad\leq_\mathbb{R}\norm{C(x\upharpoonright v)-_XC(y\upharpoonright w)}_X\Big).\end{cases}$
\item $\begin{cases}\forall x^{\mathbb{N}\to X},v^{\mathbb{N}\to X},t^{\mathbb{N}^\mathbb{N}},s^{\mathbb{N}^\mathbb{N}}\Big(v_0\in Ax_0\\
\qquad\rightarrow S(\vert t\vert +\vert s\vert)(C(x\upharpoonright v))=_XS(\vert t\vert)\overline{S(\vert s\vert)(C(x\upharpoonright v))}\Big).\end{cases}$
\end{enumerate}
\end{lemma}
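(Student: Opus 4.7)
The plan is to prove the five items in order, each reducing, via the axioms (S1), (S2), $(\mathcal{C})$ and $(RC)_{\lambda_0}$, to a finitary estimate on iterates of resolvents, to which the properties of $J^A_\gamma$ collected in Lemma \ref{lem:basicProp} are then applied.

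For (1), given a witness $y\in Ax$, I would apply (S1) to both $S(\vert t\vert)x$ and $S(\vert s\vert)x$ to reduce the claim to the finite estimate $\norm{(J^A_{\vert t\vert/n})^n x - (J^A_{\vert s\vert/n})^n x}\leq 2\vert \vert t\vert -\vert s\vert \vert\norm{y}$. This is obtained by induction on the iterates: at each step item (5) of Lemma \ref{lem:basicProp} lets one rewrite $J^A_{\vert t\vert/n}$ via $J^A_{\vert s\vert/n}$ and then absorb the outer resolvent by nonexpansivity, producing an error of order $\vert \vert t\vert -\vert s\vert \vert /n$ scaled by the norm of the Yosida approximant at the current iterate; this factor is in turn bounded by $\norm{y}$ using item (8) of Lemma \ref{lem:basicProp} together with the monotonicity of $\norm{A_\gamma u_k}$ along the iterates $u_k=(J^A_\gamma)^k x$ (a direct consequence of nonexpansivity of $J^A_\gamma$). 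Summing the $n$ errors yields the desired bound, and (S1) lifts it to the limit. Item (2) is the easier common-$n$ version: (S1) applied to both $x$ and $y$ reduces the task to iterated nonexpansivity of $J^A_{\vert t\vert/n}$, which is item (3) of Lemma \ref{lem:basicProp}.

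Items (3) and (4) lift (1) and (2) from $\mathrm{dom}A$ to $\overline{\mathrm{dom}A}$ via the $C(\cdot\upharpoonright\cdot)$ device and (S2). In (3), I would triangulate $S(\vert t\vert)C(x\upharpoonright v)-S(\vert s\vert)C(x\upharpoonright v)$ through $S(\vert t\vert)(\hat{x}\upharpoonright v)_{k+5}$ and $S(\vert s\vert)(\hat{x}\upharpoonright v)_{k+5}$: each outer difference is bounded by $2^{-(k+5)+3}=2^{-k-2}$ via (S2), and the middle one by item (1) applied to the $\mathrm{dom}A$-element $(\hat{x}\upharpoonright v)_{k+5}$ with witness $v_{k+5}$, where the hypothesized smallness of $\vert \vert t\vert -\vert s\vert \vert$ forces this contribution to be at most $2^{-k-2}$; the three pieces sum to $2^{-k}$. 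For (4), (S2) applied on both sides reduces the task to the $\mathrm{dom}A$ case handled by (2), with the resulting rate propagating through $(\mathcal{C})$ to yield the stated inequality.

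Item (5) is the main obstacle and is representational in nature: $\overline{S(\vert s\vert)C(x\upharpoonright v)}$ internally unfolds $S(\vert s\vert)C(x\upharpoonright v)$ as the explicit Cauchy sequence $\left((J^A_{\vert s\vert/N(k)})^{N(k)}C(x\upharpoonright v)\right)_k$ for choice functionals $N$ extracted from the extended form of (S1) for $\overline{\mathrm{dom}A}$ noted in the paragraph preceding this lemma, and the outer $S(\vert t\vert)$ gets unfolded by (S2) once more through this sequence. The plan is to show that, for every precision $2^{-k}$, both sides of the claimed equality are within $2^{-k}$ of a common approximation $(J^A_{(\vert t\vert +\vert s\vert)/n})^n C(x\upharpoonright v)$ for $n$ sufficiently large: the left-hand side directly via the extended form of (S1) applied to $\vert t\vert +\vert s\vert$, and the right-hand side via a Crandall-Liggett composition estimate of the form $\norm{(J^A_{\vert t\vert/m})^m (J^A_{\vert s\vert/l})^l u - (J^A_{(\vert t\vert +\vert s\vert)/(m+l)})^{m+l} u}\to 0$ as $m,l\to\infty$, derived once again from item (5) of Lemma \ref{lem:basicProp} together with the same Cauchy-type argument underlying (S1). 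The bulk of the work is combinatorial: threading the moduli $\omega^S$ and the various $N$-functionals through both representations coherently so that the finitary estimates line up. Once this is done, $=_X$ follows because the norm-difference of the two sides is provably below every $2^{-k}$.
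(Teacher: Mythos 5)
Your plan for items (2)--(4) matches the paper's proof step for step: reduce to the resolvent iterates via (S1)/(S2), apply (iterated) nonexpansivity from Lemma~\ref{lem:basicProp}, and triangulate through $(\hat x\upharpoonright v)_{k+5}$ with the $(\mathcal C)$ rate $2^{-n+3}$. For item (1), however, you take a genuinely different route: a telescoping induction on the iterates, using the resolvent identity (item (5) of Lemma~\ref{lem:basicProp}) at each step together with the monotonicity of $k\mapsto\norm{A_\gamma(J_\gamma^kx)}$ and item (8) to bound each increment by $\bigl|\,|t|-|s|\,\bigr|\norm{y}/n$. This is valid and in fact yields the \emph{sharper} bound $\bigl|\,|t|-|s|\,\bigr|\norm{y}$ without the factor $2$; the paper instead formalizes the full Crandall--Liggett two-index estimate $\norm{(J_\mu)^n x-(J_\lambda)^mx}$ and specializes to $n=m$, giving $2\bigl|\,|t|-|s|\,\bigr|\norm{y}$ in the limit. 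The paper's route is heavier but reuses an estimate it also needs elsewhere; yours is more elementary and fits the telescoping template.

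For item (5) the divergence is more substantial. You propose to show both sides are within $2^{-k}$ of a common approximant $(J_{(|t|+|s|)/n})^nC(x\upharpoonright v)$, with the right-hand side handled by a ``product formula'' estimate $\norm{(J_{|t|/m})^m(J_{|s|/l})^lu-(J_{(|t|+|s|)/(m+l)})^{m+l}u}\to 0$. The paper instead proves by internal induction that $[S(|t|)]^mx=S(m|t|)x$ on $\mathrm{dom}A$, derives the semigroup law for \emph{rational} $t,s$ by regrouping these iterates, and then extends to arbitrary nonnegative reals and to $\overline{\mathrm{dom}A}$ by the continuity established in items (3) and (4). Your route is conceptually more direct but rests on a composition estimate that is not proved anywhere in the paper and does \emph{not} follow immediately from the two-index Crandall--Liggett inequality already formalized: the triangulation one would naturally attempt produces mixed terms such as $\norm{(J_{|t|/m})^m(J_{|s|/l})^lu-(J_{(|t|+|s|)/(m+l)})^m(J_{|s|/l})^lu}$, which need fresh witnesses $v\in A\bigl((J_{|s|/l})^lu\bigr)$ (built from Yosida approximants and the range condition) and a careful count of errors in both $m$ and $l$. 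You flag this as ``the bulk of the work,'' correctly; it is a real lemma you would have to prove and is in my judgment at least as involved as the rational-decomposition argument the paper uses, which has the advantage of only ever invoking (S1) with a single step size. Your sketch is not wrong, but as written this is the one place where you should not assume the estimate will ``line up'' without carrying it out.
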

\begin{proof}
\begin{enumerate}
\item At first, note that provably in $H^\omega_p$, we have
\[
\begin{cases}\forall x^X,y^X,\mu^{\mathbb{N}^\mathbb{N}},\lambda^{\mathbb{N}^\mathbb{N}},n^\mathbb{N},m^\mathbb{N}\bigg(\lambda_0>\vert\lambda\vert\geq\vert\mu\vert\land n\geq m\geq 1\land y\in Ax\\
\qquad\rightarrow \norm{(J^A_{\vert\mu\vert})^nx-(J^A_{\vert\lambda\vert})^mx}\leq\bigg( \left((n\vert\mu\vert-m\vert\lambda\vert)^2+n\vert\mu\vert(\vert\lambda\vert-\vert\mu\vert)\right)^{1/2}\\
\qquad\qquad+\left(m\vert\lambda\vert(\vert\lambda\vert-\vert\mu\vert)+(m\vert\lambda\vert-n\vert\mu\vert)^2\right)^{1/2}\bigg)\norm{y}\bigg)\end{cases}
\]
which can be shown by formalizing the proof given in \cite{CL1971b} (note for this Lemma \ref{lem:basicProp}\footnote{Note that the theorems in Lemma \ref{lem:basicProp} remain valid for the system $H^\omega_p$ if the indices of the resolvent are restricted to be $<\lambda_0$.}). Instantiating this with $m=n$, $\mu=\vert t\vert/n$ and $\lambda=\vert s\vert/n$ for $t,s$ of type ${\mathbb{N}^\mathbb{N}}$, where w.l.o.g.\ $\vert s\vert\geq \vert t\vert$,  and where $n$ is large enough that $\vert t\vert/n,\vert s\vert/n<\lambda_0$, we obtain
\begin{align*}
\norm{(J^A_{\vert t\vert/n})^nx-(J^A_{\vert s\vert/n})^nx}\leq&\bigg( \left((\vert t\vert-\vert s\vert)^2+\vert t\vert(\vert s\vert/n-\vert t\vert/n)\right)^{1/2}\\
&+\left(\vert s\vert(\vert s\vert/n-\vert t\vert/n)+(\vert s\vert-\vert t\vert)^2\right)^{1/2}\bigg)\norm{y}
\end{align*}
for any $x,y$ with $y\in Ax$. Let $k$ be arbitrary. Using the axioms $(S)$, we get
\begin{align*}
\norm{S(\vert t\vert)x-S(\vert s\vert)x}&\leq \norm{S(\vert t\vert)x-(J^A_{\vert t\vert/n})^nx}+\norm{(J^A_{\vert t\vert/n})^nx-(J^A_{\vert s\vert/n})^nx}\\
&\qquad+\norm{S(\vert s\vert)x-(J^A_{\vert s\vert/n})^nx}\\
&\leq 2\cdot 2^{-k}+\norm{(J^A_{\vert t\vert/n})^nx-(J^A_{\vert s\vert/n})^nx}\\
&\leq 2\cdot 2^{-k}+\Big( \left((\vert t\vert-\vert s\vert)^2+\vert t\vert(\vert s\vert/n-\vert t\vert/n)\right)^{1/2}\\
&\qquad+\left(\vert s\vert(\vert s\vert/n-\vert t\vert/n)+(\vert s\vert-\vert t\vert)^2\right)^{1/2}\Big)\norm{y}
\end{align*}
for any $n$ additionally satisfying $n\geq\omega^S(k,b,T)$ with $b>\norm{x},\norm{v}$ and $T>\vert t\vert,\vert s\vert$. This implies
\[
\norm{S(\vert t\vert)x-S(\vert s\vert)x}\leq 2\cdot 2^{-k}+2\vert \vert t\vert-\vert s\vert\vert\norm{y}
\]
and the claim follows as $k$ was arbitrary.
\item By (essentially) Lemma \ref{lem:basicProp}, we have provably that
\[
\norm{J^A_{\vert\lambda\vert} x-J^A_{\vert\lambda\vert} y}\leq\norm{x-y}
\]
for any $\lambda_0>\lambda$ of type ${\mathbb{N}^\mathbb{N}}$ and any $x,y$ of type $X$. By induction, we get
\[
\norm{(J^A_{\vert t\vert/n})^n x-(J^A_{\vert t\vert/n})^n y}\leq\norm{x-y}
\]
for any $t$ of type ${\mathbb{N}^\mathbb{N}}$, any $x,y$ of type $X$ and any $n$ large enough such that $\vert t\vert/n<\lambda_0$. Now, let $k$ be arbitrary. Then we get
\begin{align*}
\norm{S(\vert t\vert)x-S(\vert t\vert)y}&\leq\norm{S(\vert t\vert)x-(J^A_{\vert t\vert/n})^nx}+\norm{(J^A_{\vert t\vert/n})^n x-(J^A_{\vert t\vert/n})^n y}\\
&\qquad+\norm{S(\vert t\vert)y-(J^A_{\vert t\vert/n})^ny}\\
&\leq 2\cdot 2^{-k}+\norm{(J^A_{\vert t\vert/n})^n x-(J^A_{\vert t\vert/n})^n y}\\
&\leq 2\cdot 2^{-k}+\norm{x-y}
\end{align*}
for any $n$ additionally satisfying $n\geq\omega^S(k,b,T)$ with $b>\norm{x},\norm{y},\norm{v},\norm{w}$ with $v\in Ax$ and $w\in Ay$ as well as $T>\vert t\vert$ using $(S)$. As $k$ was arbitrary, we get the claim.
\item Using item (1) and axiom (S2), if $v_0\in Ax_0$, we have
\begin{align*}
&\norm{S(\vert t\vert )C(x\upharpoonright v)-S(\vert s\vert) C(x\upharpoonright v)}\\
&\qquad\leq \norm{S(\vert t\vert)C(x\upharpoonright v)-S(\vert t\vert)(\widehat{x}\upharpoonright v)_n}\\
&\qquad\qquad+\norm{S(\vert t\vert)(\widehat{x}\upharpoonright v)_n-S(\vert s\vert)(\widehat{x}\upharpoonright v)_n}\\
&\qquad\qquad+\norm{S(\vert s\vert)(\widehat{x}\upharpoonright v)_n-S(\vert s\vert) C(x\upharpoonright v)}\\
&\qquad\leq 2\cdot 2^{-n+3}+\norm{S(\vert t\vert)(\widehat{x}\upharpoonright v)_n-S(\vert s\vert)(\widehat{x}\upharpoonright v)_n}\\
&\qquad\leq 2\cdot 2^{-n+3}+2\vert\vert t\vert-\vert s\vert\vert\norm{(\widehat{x}\upharpoonleft v)_n}.
\end{align*}
Choosing $n=k+5$, we get the claim for $\vert\vert t\vert-\vert s\vert\vert\leq 2^{-(k+2)}/\max\{1,\norm{(\widehat{x}\upharpoonleft v)_{k+5}}\}$.
\item Using item (2), axiom (S2) as well as ($\mathcal{C}$), if $v_0\in Ax_0$ and $w_0\in Ay_0$, we have
\begin{align*}
&\norm{S(\vert t\vert)(C(x\upharpoonright v))-S(\vert t\vert)(C(y\upharpoonright w))}\\
&\qquad\leq\norm{S(\vert t\vert)(C(x\upharpoonright v))-S(\vert t\vert)((\widehat x\upharpoonright v)_k)}\\
&\qquad\qquad+\norm{S(\vert t\vert)((\widehat x\upharpoonright v)_k)-S(\vert t\vert)((\widehat y\upharpoonright w)_k)}\\
&\qquad\qquad+\norm{S(\vert t\vert)(C(y\upharpoonright w))-S(\vert t\vert)((\widehat y\upharpoonright w)_k)}\\
&\qquad\leq 2\cdot 2^{-k+3}+\norm{S(\vert t\vert)((\widehat x\upharpoonright v)_k)-S(\vert t\vert)((\widehat y\upharpoonright w)_k}\\
&\qquad\leq 2\cdot 2^{-k+3}+\norm{(\widehat x\upharpoonright v)_k-(\widehat y\upharpoonright w)_k}\\
&\qquad\leq 2\cdot 2^{-k+3}+\norm{(\widehat x\upharpoonright v)_k-C(x\upharpoonright v)}\\
&\qquad\qquad+\norm{C(x\upharpoonright v)-C(y\upharpoonright w)}\\
&\qquad\qquad+\norm{C(y\upharpoonright w)-(\widehat y\upharpoonright w)_k}\\
&\qquad\leq 2\cdot 2^{-k+3}+2\cdot 2^{-k+3}+\norm{C(x\upharpoonright v)-C(y\upharpoonright w)}.
\end{align*}
As this holds for arbitrary $k$, we get the claim.
\item Let $x\in\mathrm{dom}A$. Using the previously introduced notation of $\overline{\,\cdot\,}$, we write
\[
\begin{cases}
[S(\vert t\vert)]^1x=\overline{S(\vert t\vert)x},\\
[S(\vert t\vert)]^{m+1}x=\overline{S(\vert t\vert)\left([S(\vert t\vert)]^m x\right)}.
\end{cases}
\]
Note that provably 
\[
[S(\vert t\vert)]^{m+1}x=S(\vert t\vert)\left([S(\vert t\vert)]^mx\right)
\]
which follows as in the discussion previous to this lemma. We now show by induction on $m$ that provably
\[
\forall k\exists N_{m}\forall n\geq N_{m}\left(\vert t\vert/n<\lambda_0\rightarrow \norm{[S(\vert t\vert)]^mx-\left(\left(J^A_{\vert t\vert/n}\right)^m\right)^{n}x}\leq2^{-k}\right).
\]
The induction base follows from (S1) as was already discussed above. For the induction step, let $N_m(k)$ be the choice function of the above statement. Then for arbitrary $k$, we get (using extensionality, see Remark \ref{rem:extSemi}, and nonexpansivity of $S(\vert t\vert)$ on the closure of the domain) that
\begin{align*}
&\norm{[S(\vert t\vert)]^{m+1}x- \left(\left(J^A_{\vert t\vert/n}\right)^{m+1}\right)^nx}\\
&\qquad= \norm{S(\vert t\vert)[S(\vert t\vert)]^mx-\left(J^A_{\vert t\vert/n}\right)^n\left(\left(J^A_{\vert t\vert/n}\right)^m\right)^nx}\\
&\qquad\leq \norm{S(\vert t\vert)[S(\vert t\vert)]^mx-\left(J^A_{\vert t\vert/n}\right)^n[S(\vert t\vert)]^mx}\\
&\qquad\qquad+\norm{\left(J^A_{\vert t\vert/n}\right)^n[S(\vert t\vert)]^mx-\left(J^A_{\vert t\vert/n}\right)^n\left(\left(J^A_{\vert t\vert/n}\right)^m\right)^nx}\\
&\qquad\leq \norm{S(\vert t\vert)[S(\vert t\vert)]^mx-\left(J^A_{\vert t\vert/n}\right)^n[S(\vert t\vert)]^mx}\\
&\qquad\qquad+\norm{[S(\vert t\vert)]^mx-\left(\left(J^A_{\vert t\vert/n}\right)^m\right)^nx}\\
&\qquad\leq 2^{-k}
\end{align*}
for all $n$ such that $\vert t\vert /n<\lambda_0$, $n\geq N_m(k+1)$ and such that $n$ is large enough for
\[ 
\norm{S(\vert t\vert)[S(\vert t\vert)]^mx-\left(J^A_{\vert t\vert/n}\right)^n[S(\vert t\vert)]^mx}\leq 2^{-(k+1)}
\]
which can be constructed as in the discussion previous to this lemma. Therefore, given $k$, we in particular get
\begin{align*}
&\norm{[S(\vert t\vert)]^mx-S(m\vert t\vert)x}\\
&\qquad\leq\norm{[S(\vert t\vert)]^mx-\left(\left(J^A_{\vert t\vert/n}\right)^m\right)^nx}+\norm{\left(\left(J^A_{\vert t\vert/n}\right)^m\right)^nx-S(m\vert t\vert)x}\\
&\qquad= \norm{[S(\vert t\vert)]^mx-\left(\left(J^A_{\vert t\vert/n}\right)^m\right)^nx}+\norm{\left(J^A_{m\vert t\vert/mn}\right)^{mn}x-S(m\vert t\vert)x}\\
&\qquad\leq 2^{-(k+1)}+2^{-(k+1)}\\
&\qquad\leq 2^{-k}
\end{align*}
for any $n$ such that $m\vert t\vert/n<\lambda_0$ and
\[
n\geq\max\{N_{m}(k+1),\omega^S(k+1,b,mT)\}
\]
for $b>\norm{x},\norm{v}$ and $T>\vert t\vert$ using (S1) and the previous result. As $k$ was arbitrary, we get $[S(\vert t\vert)]^mx=S(m\vert t\vert)x$. Using this, we provably get
\begin{align*}
S\left(\frac{l}{k}+\frac{r}{s}\right)x&=S\left(\frac{ls+rk}{ks}\right)x\\
&=\left[S\left(\frac{1}{ks}\right)\right]^{ls+rk}x\\
&=\left[S\left(\frac{1}{ks}\right)\right]^{ls}\left[S\left(\frac{1}{ks}\right)\right]^{rk}x\\
&=\left[S\left(\frac{1}{ks}\right)\right]^{ls}\overline{S\left(\frac{rk}{ks}\right)x}\\
&=S\left(\frac{ls}{ks}\right)\overline{S\left(\frac{rk}{ks}\right)x}\\
&=S\left(\frac{l}{k}\right)\overline{S\left(\frac{r}{s}\right)x}
\end{align*}
where we have used the above items for extensionality of $S$ (see again Remark \ref{rem:extSemi}). A continuity argument using item (3) now yields the claim for arbitrary reals $\vert t\vert$ and $\vert s\vert$. Further, the claim extends to the closure of the domain via another usual continuity argument. Both we do not spell out here.
\end{enumerate}
\end{proof}

\begin{remark}\label{rem:extSemi}
The constant $S(t)x$ is provably extensional in $x\in\overline{\mathrm{dom}A}$ for any $t\geq 0$ by (4) as well as in $t\geq 0$ for any $x\in\overline{\mathrm{dom}A}$ by (3).
\end{remark}

\begin{remark}\label{rem:unifEqui}
Note that by the proof of the above item (3), we have that if the operator $A$ is majorizable in the sense of \cite{Pis2022}, i.e.\ if there exists a function $A^*:\mathbb{N}\to\mathbb{N}$ such that
\[
\forall b\in\mathbb{N}\forall x\in\mathrm{dom}A\cap\overline{B}_b(0)\exists y\in X\left(\norm{y}\leq A^*b\land y\in Ax\right),
\]
then the semigroup $\mathcal{S}$ generated by $A$ through the Crandall-Liggett formula is uniformly equicontinuous in the sense of \cite{KKA2016}, i.e.\ there exists a function $\omega:\mathbb{N}\times\mathbb{N}\times\mathbb{N}\to\mathbb{N}$ such that 
\begin{gather*}
\forall b\in\mathbb{N}\forall q\in \overline{\mathrm{dom}A}\cap B_b(0)\forall m\in\mathbb{N}\forall K\in\mathbb{N}\forall t,t'\in [0,K]\\
\left( \vert t-t'\vert< 2^{-\omega_{K,b}(m)}\to \norm{S(t)q-S(t')q}< 2^{-m}\right).
\end{gather*}
Concretely, assuming w.l.o.g.\ that $A^*$ is nondecreasing, this so-called \emph{modulus of uniform equicontinuity} for $\mathcal{S}$ can be given by
\[
\omega_{K,b}(m)=(m+2)\max\{1,A^*(b+1)\}.
\]
Note in particular that this modulus is independent of the parameter $K$.
\end{remark}

We now come to the main theoretical result of this paper which comprises a proof-theoretic bound extraction theorem for the system $H^\omega_p$ akin to the usual metatheorems of proof mining. As discussed before, the proof of this metatheorem follows the general outline of most other proofs in the literature and, since the proof is very much standard in this way, we hence omit most of the details and in the following mainly just sketch the majorizability of the new constant $S$.

\begin{theorem}\label{thm:metatheoremSC}
Let $\tau$ be admissible, $\delta$ be of degree $1$ and $s$ be a closed term of $H^\omega_p$ of type $\delta\to \sigma$ for admissible $\sigma$. Let $B_\forall(x,y,z,u)$/$C_\exists(x,y,z,v)$ be $\forall$-/$\exists$-formulas of $H^\omega_p$ with only $x,y,z,u$/$x,y,z,v$ free. Let $\Delta$ be a set of formulas of the form $\forall\underline{a}^{\underline{\alpha}}\exists\underline{b}\preceq_{\underline{\beta}}\underline{r}\underline{a}\forall\underline{c}^{\underline{\zeta}}F_{qf}(\underline{a},\underline{b},\underline{c})$ where $F_{qf}$ is quantifier-free, the types in $\underline{\alpha}$, $\underline{\beta}$ and $\underline{\zeta}$ are admissible and $\underline{r}$ is a tuple of closed terms of appropriate type. If
\[
H^\omega_p+\Delta\vdash\forall x^\delta\forall y\preceq_\sigma s(x)\forall z^\tau\left(\forall u^\mathbb{N} B_\forall(x,y,z,u)\rightarrow\exists v^\mathbb{N} C_\exists(x,y,z,v)\right),
\]
then one can extract a partial functional $\Phi:\mathcal{S}_{\delta}\times \mathcal{S}_{\widehat{\tau}}\times\mathbb{N}\times\mathbb{N}^\mathbb{N}\rightharpoonup\mathbb{N}$ which is total and (bar-recursively) computable on $\mathcal{M}_\delta\times\mathcal{M}_{\widehat{\tau}}\times\mathbb{N}\times\mathbb{N}^\mathbb{N}$ and such that for all $x\in \mathcal{S}_\delta$, $z\in \mathcal{S}_\tau$, $z^*\in \mathcal{S}_{\widehat{\tau}}$ and all $n\in\mathbb{N}$ and $\omega\in\mathbb{N}^\mathbb{N}$, if $z^*\gtrsim z$ and $\omega\gtrsim\omega^S$ as well as $n\geq_\mathbb{R} m_{\widetilde{\gamma}},\vert\widetilde{\gamma}\vert,\norm{c_X}_X$, $\norm{d_X}_X$, $\vert\lambda_0\vert$, $m_{\lambda_0}$, $m'_{\widetilde{\gamma}}$, then
\begin{gather*}
\mathcal{S}^{\omega,X}\models\forall y\preceq_\sigma s(x)\big(\forall u\leq_\mathbb{N}\Phi(x,z^*,n,\omega) B_\forall(x,y,z,u)\\
\rightarrow\exists v\leq_\mathbb{N}\Phi(x,z^*,n,\omega)C_\exists(x,y,z,v)\big)
\end{gather*}
holds for $\mathcal{S}^{\omega,X}$ whenever $\mathcal{S}^{\omega,X}\models\Delta$ where $\mathcal{S}^{\omega,X}$ is defined via any (nontrivial) Banach space $(X,\norm{\cdot})$ with 
\begin{enumerate}
\item $\chi_A$ interpreted by the characteristic function of an accretive operator $A$ satisfying the range condition $\overline{\mathrm{dom}A}\subseteq \bigcap_{\lambda_0>\gamma>0}\mathrm{dom}J^A_\gamma$,
\item $J^{\chi_A}$ interpreted by the corresponding resolvents $J^A_\gamma x$ for $\lambda_0>\gamma\geq 0$  and $x\in\mathrm{dom}J^A_\gamma$, and by $0$ otherwise,
\item $j$ interpreted as discussed in Section \ref{sec:jinterp}, 
\item $S$ interpreted by the semigroup generated by $A$ via the Crandall-Liggett formula on $[0,\infty)\times \overline{\mathrm{dom}A}$, and $0$ otherwise,
\item $d_X$, $c_X$ interpreted by a pair $(c,d)\in A$ witnessing $A\neq\emptyset$,
\item $\omega^S$ interpreted by a rate of convergence for the limit generating the semigroup on $\mathrm{dom}A$,
\end{enumerate}
and with the other constants naturally interpreted so that the respective axioms are satisfied.

Further: If $\widehat{\tau}$ is of degree $1$, then $\Phi$ is a total computable functional. If the claim is proved without $\mathrm{DC}$, then $\tau$ may be arbitrary and $\Phi$ will be a total functional on $\mathcal{S}_\delta\times \mathcal{S}_{\widehat{\tau}}\times\mathbb{N}\times \mathbb{N}^\mathbb{N}$ which is primitive recursive in the sense of G\"odel's T. In that latter case, also plain majorization can be used instead of strong majorization.
\end{theorem}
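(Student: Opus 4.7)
The plan is to follow the by-now standard pattern for proof-mining metatheorems in this line of work, in particular the proof of Theorem \ref{thm:metatheoremC} in \cite{Pis2022} and the extensions to $\widehat{\mathcal{V}}^\omega_p$ discussed earlier in Sections 3 and 4 of this paper. Concretely, given a proof of the displayed $\forall\exists$-statement in $H^\omega_p+\Delta$, one first applies a negative translation (e.g.\ \cite{Kur1951}) and then Kohlenbach's monotone Dialectica interpretation \cite{Koh1996} to extract bar-recursive realizing majorants. These majorants are validated in the model $\mathcal{M}^{\omega,X}$ of majorizable functionals, and a transfer step exploiting the admissibility conditions on $\tau$ and $\sigma$ lifts the conclusion to the standard model $\mathcal{S}^{\omega,X}$. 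The sentences in $\Delta$ are in the Gerhardy-Kohlenbach format, so by inspection admissible under the monotone Dialectica translation.

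Since all the new axioms of $H^\omega_p$ -- namely $(\mathcal{C})$, $(RC)_{\lambda_0}$, (IV)$'$, (V)$'$, (VI), (S1), (S2) and ($J$), ($A$) -- are purely universal over quantifier-free matrices, they are admissible in this framework and hence validated under the canonical $\mathcal{M}^{\omega,X}$-interpretations described in the theorem. The work specific to $H^\omega_p$ therefore reduces to verifying that the new constants are majorizable in $\mathcal{M}^{\omega,X}$. The constants $C$, $\lambda_0$, $m_{\lambda_0}$, $d_X$ and $c_X$ are immediate (as discussed around Remark \ref{rem:modelRem} and in \cite{Koh2008}), and the majorization of $\omega^S$ is trivial since it has type $\mathbb{N}\to (\mathbb{N}\to (\mathbb{N}\to\mathbb{N}))$: its monotone hull $\omega^{S,M}(k,b,T)=\max\{\omega^S(i,j,l)\mid i\leq k, j\leq b, l\leq T\}$ majorizes it and can be diagonalized into a single $\mathbb{N}^\mathbb{N}$-functional $n\mapsto \omega^{S,M}(n,n,n)$, which is precisely the extra input parameter $\omega$ in the extracted functional $\Phi$.

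The crucial new step is therefore the majorization of the constant $S$ of type $\mathbb{N}^\mathbb{N}\to (X\to X)$, whose $\mathcal{M}^{\omega,X}$-interpretation is the Crandall--Liggett semigroup on $\overline{\mathrm{dom}A}$ and $0$ elsewhere. Using Lemma \ref{lem:semigroupprop}(1),(2),(4) together with axiom (V)$'$ providing $d_X\in Ac_X$, one obtains for any $x\in\mathrm{dom}A$ the estimate
\[
\norm{S(\vert t\vert)x}\leq\norm{S(\vert t\vert)x-S(\vert t\vert)c_X}+\norm{S(\vert t\vert)c_X-S(0)c_X}+\norm{c_X}\leq \norm{x}+2\norm{c_X}+2\vert t\vert\norm{d_X},
\]
which extends to all $x\in\overline{\mathrm{dom}A}$ by the nonexpansivity from Lemma \ref{lem:semigroupprop}(4) and is vacuously compatible with the $0$-value outside. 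Consequently, if $t^*\gtrsim t$ and $m\geq\norm{x}$, and if $N$ bounds $\norm{c_X}$ and $\norm{d_X}$ (guaranteed by the parameter $n$ in the theorem), then $(t^*,m)\mapsto (m+2N+2t^*(0)\cdot N)_\circ$ (with the standard extraction of an integer bound for $\vert t\vert$ from $t^*$) furnishes the required majorant for $[S]_\mathcal{M}$.

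The main obstacle will not be the arithmetic of the majorants but rather verifying that $[S]_\mathcal{M}$, defined as the actual Crandall--Liggett semigroup on $\overline{\mathrm{dom}A}$ and $0$ elsewhere, actually satisfies both (S1) and (S2) in $\mathcal{M}^{\omega,X}$. For (S1) this is immediate from the interpretation of $\omega^S$ as an honest rate of convergence for the exponential formula on $\mathrm{dom}A$ (whose existence is guaranteed by the uniform version of the Crandall--Liggett estimate already formalized in $\widehat{\mathcal{V}}^\omega_{i,p}+(\mathcal{C})+(RC)_{\lambda_0}$, as sketched in Section 4.4). For (S2) one must check that $\norm{S(\vert t\vert)(C(x\upharpoonright y))-S(\vert t\vert)((\hat{x}\upharpoonright y)_n)}\leq 2^{-n+3}$, which follows by combining the nonexpansivity of $S(\vert t\vert)$ on $\overline{\mathrm{dom}A}$ with the defining rate of the Cauchy-constructor $C$ in axiom $(\mathcal{C})$ and the equality $\widehat{x\upharpoonright y}=\hat{x}\upharpoonright y$ from Section 4.1. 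With these verifications in place, the remainder of the argument is a routine adaptation of the proof of Theorem \ref{thm:metatheoremC}.
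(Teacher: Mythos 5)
Your proposal is correct and follows the same standard Dialectica-based extraction pattern as the paper: purely universal axioms, verification that all new constants are majorizable, model verification for $\mathcal{M}^{\omega,X}$, then transfer to $\mathcal{S}^{\omega,X}$. The one substantive divergence is the arithmetic for the crucial $S$-majorization. The paper's proof bounds $\norm{S(t)x}$ on $\mathrm{dom}A$ by passing through axiom (S1) together with the resolvent estimate $\norm{(J^A_\lambda)^n x - x}\leq n\norm{J^A_\lambda x - x}$, choosing $n\geq\omega^S(0,b,T)+\ceil*{T/\lambda_0}$ so that $(J^A_{t/n})^n x$ is within distance $1$ of $S(t)x$, yielding $\norm{S(t)x}\leq 1+\norm{x}+2\norm{c_X}+T\norm{d_X}$. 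You instead invoke the already-established Lipschitz-in-$t$ and nonexpansivity properties from Lemma~\ref{lem:semigroupprop}(1),(2),(4) to estimate directly
\[
\norm{S(t)x}\leq\norm{S(t)x-S(t)c_X}+\norm{S(t)c_X-S(0)c_X}+\norm{c_X}\leq\norm{x}+2\norm{c_X}+2\vert t\vert\norm{d_X},
\]
reaching a bound of the same shape by a slightly more direct route that does not need to go back through the iterated resolvent or $\omega^S$. Both are legitimate majorization arguments. You also make explicit two things the paper leaves implicit: the diagonalization of the monotone hull of $\omega^S$ to produce the $\omega\in\mathbb{N}^\mathbb{N}$ input of $\Phi$, and the model-level verification that $[S]_\mathcal{M}$ satisfies (S1)/(S2) (for (S2) via nonexpansivity, the rate in $(\mathcal{C})$, and $\widehat{x\upharpoonright y}=\hat{x}\upharpoonright y$). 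No gaps.
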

\begin{proof}
The proof given in \cite{Pis2022} immediately extends to this system, noticing the additional considerations on the model of majorizable functionals discussed in the context of $j$ as well as Remark \ref{rem:modelRem}. In particular, note also that all axioms added to $H^\omega_p$ are purely universal and that the new constants other than $S$ can be majorized as discussed throughout the previous sections. For the last constant $S$, we can argue for the majorizability as follows: In the context of the axiom (V)$'$, stating that $\mathrm{dom}A$ is not empty using the constants $c_X$ and $d_X$, majorization of the constant $S$ on $t\geq 0$ and $x\in\mathrm{dom}A$ follows rather immediately. It is straightforward to obtain that
\[
\widehat{\mathcal{V}}^\omega_p\vdash\forall x^X,\lambda^{\mathbb{N}^\mathbb{N}},n^\mathbb{N}\left( x\in\mathrm{dom}J^A_{\vert\lambda\vert}\rightarrow \norm{(J^A_{\vert\lambda\vert})^nx-_Xx}_X\leq_\mathbb{R} n\norm{J^A_{\vert\lambda\vert} x-_Xx}_X\right).
\]
Therefore, we have for $x\in\mathrm{dom}A$ with $v\in Ax$ and $b>\norm{x},\norm{v}$ and for $t\geq 0$ with $T>t$ that for $n\geq (\omega^S(0,b,T)+\ceil*{T/\lambda_0})$:\footnote{We can choose e.g.\ $n=\omega^S(0,b,T)+[T/\lambda_0](0)+1$ which can be represented through a closed term.}
\begin{align*}
\norm{S(t)x}&\leq\norm{S(t)x-(J^A_{t/n})^{n}x}+\norm{(J^A_{t/n})^{n}x}\\
&\leq 1+\norm{(J^A_{t/n})^{n}x-(J^A_{t/n})^{n}c_X}+\norm{(J^A_{t/n})^{n}c_X}\\
&\leq 1+\norm{x-c_X}+\norm{c_X}+n\norm{J^A_{t/n}c_X-c_X}\\
&\leq 1+\norm{x}+2\norm{c_X}+T\norm{d_X}
\end{align*}
which follows from the axioms $(S)$ and $(RC)_{\lambda_0}$. This extends to $\overline{\mathrm{dom}A}$ as follows: For $x\in\overline{\mathrm{dom}A}$ and $x_n\to x$ with rate of convergence $2^{-n}$ and where $x_n\in\mathrm{dom}A$, we have $\norm{x_0-x}\leq 1$ and $\norm{S(t)x-S(t)x_0}\leq 1$ and thus
\begin{align*}
\norm{S(t)x}&\leq 1+\norm{S(t)x_0}\\
&\leq 2+\norm{x_0}+2\norm{c_X}+T\norm{d_X}
\\
&\leq 3+\norm{x}+2\norm{c_X}+T\norm{d_X}.
\end{align*}
\end{proof}
Also Theorem \ref{thm:metatheoremI} extends to an intuitionistic version $H^\omega_{i,p}$ of the system $H^\omega_p$ in that fashion. Concretely, let $H^\omega_{i,p}$ be defined as the extension/modification of $\mathcal{V}^\omega_{i,p}$ with the same constants and axioms as were added to/modified in $\mathcal{V}^\omega_p$ to form $H^\omega_p$. Then the following semi-constructive bound extraction theorem holds:
\begin{theorem}\label{thm:metatheoremSCI}
Let $\delta$ be of degree $1$ and $\sigma,\tau$ be arbitrary, $s$ be a closed term of suitable type. Let $\Gamma_\neg$ be a set of sentences of the form $\forall\underline{a}^{\underline{\alpha}}(E(\underline{a})\rightarrow \exists\underline{b}\preceq_{\underline{\beta}}\underline{t}\underline{a}\neg F(\underline{a},\underline{b}))$ with $\underline{\alpha},\underline{\beta}$ and $E,F$ arbitrary types and formulas respectively and where $\underline{t}$ is a tuple of closed terms. Let $B(x,y,z)$/$C(x,y,z,u)$ be arbitrary formulas of $H^\omega_{i,p}$ with only $x,y,z$/$x,y,z,u$ free. If
\[
H^\omega_{i,p}+\Gamma_\neg\vdash\forall x^\delta\,\forall y\preceq_\sigma s(x)\,\forall z^\tau\,(\neg B(x,y,z)\rightarrow\exists u^\mathbb{N} C(x,y,z,u)), 
\]
one can extract a $\Phi:\mathcal{S}_\delta\times\mathcal{S}_{\widehat{\tau}}\times\mathbb{N}\times\mathbb{N}^\mathbb{N}\to\mathbb{N}$ which is primitive recursive in the sense of G\"odel's T such that for any $x\in\mathcal{S}_\delta$, any $y\in\mathcal{S}_\sigma$ with $y\preceq_\sigma s(x)$, any $z\in\mathcal{S}_{\tau}$ and $z^*\in\mathcal{S}_{\widehat{\tau}}$ with $z^*\gtrsim z$ and any $n\in\mathbb{N}$ and $\omega\in\mathbb{N}^\mathbb{N}$ with $\omega\gtrsim\omega^S$ as well as $n\geq_\mathbb{R}m_{\widetilde{\gamma}},\vert\widetilde{\gamma}\vert,\norm{c_X}_X, \norm{d_X}_X, \vert\lambda_0\vert, m_{\lambda_0}, m'_{\widetilde{\gamma}}$, we have that
\[
\mathcal{S}^{\omega,X}\models\exists u\leq_\mathbb{N}\Phi(x,z^*,n,\omega)\,(\neg B(x,y,z)\rightarrow C(x,y,z,u))
\]
holds for $\mathcal{S}^{\omega,X}$ whenever $\mathcal{S}^{\omega,X}\models\Gamma_\neg$ where $\mathcal{S}^{\omega,X}$ is defined via any (nontrivial) Banach space $(X,\norm{\cdot})$ with the constants interpreted as in Theorem \ref{thm:metatheoremSC}.
\end{theorem}
Using the previous arguments regarding the majorizability of the (new) constants, the proof is a straightforward adaptation of the proof for the central semi-constructive metatheorem for the system $\mathcal{A}^\omega_i[X,\norm{\cdot}]+\mathrm{IP}_\neg+\mathrm{CA}_\neg$ given in \cite{GeK2006} and we thus omit any further details.

\section{Two applications}

The current section now employs the formal systems and the bound extraction theorems introduced above to provide quantitative information on the asymptotic behavior of these semigroups in the form of two case studies. Concretely, motivated by the results of Pazy \cite{Paz1971} for iterations of nonexpansive mappings, the following two asymptotic results on the resolvents and the semigroups generated by the underlying operator were established:\footnote{To simplify the notation in the following, we drop the superscript of the operator $A$ from the resolvent.}

\begin{theorem}[Plant \cite{Pla1981}]\label{thm:Plant}
Let $X$ be uniformly convex, $A$ be an accretive operator that satisfies the range condition $(RC)_{\lambda_0}$ and let $x\in\mathrm{dom}A$. Then
\[
\lim_{\lambda_0> t\to 0^+}\frac{\norm{J_t x-S(t)x}}{t}= 0.
\]
\end{theorem}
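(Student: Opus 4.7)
The plan is to exploit the fact that, in a uniformly convex space $X$, both $J_tx$ and $S(t)x$ should be right-differentiable at $t=0$ with the same derivative $-A^0x$, where $A^0x$ denotes the element of minimum norm in $\overline{Ax}$. This element exists because uniform convexity of $X$ implies reflexivity together with the Kadec--Klee property. Subtracting the two derivatives then yields $(J_tx - S(t)x)/t \to 0$.

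Fix $x\in\mathrm{dom}A$ and some $v\in Ax$. The Lipschitz estimates $\norm{J_tx-x}\leq t\norm{v}$ (Lemma \ref{lem:basicProp}(8)) and $\norm{S(t)x-x}\leq t\norm{v}$ (derivable from the Crandall--Liggett formula in $H^\omega_p$) show that the numerator is $O(t)$, so it suffices to prove, in the norm topology,
\[
\lim_{\lambda_0>t\to 0^+}\frac{x-J_tx}{t}=A^0x \quad\text{and}\quad \lim_{\lambda_0>t\to 0^+}\frac{x-S(t)x}{t}=A^0x.
\]
Subtracting the two limits then gives $(S(t)x-J_tx)/t\to 0$, which is the claim.

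For the first limit, the Yosida approximant $A_tx=(x-J_tx)/t$ lies in $A(J_tx)$ and, via the resolvent identity of Lemma \ref{lem:basicProp}(5), the map $t\mapsto\norm{A_tx}$ is monotone as $t\downarrow 0$, so it converges to some $L\leq\norm{A^0x}$. Weak compactness extracts weak subsequential limits of $A_tx$; accretivity of $A$ together with $J_tx\to x$ forces any such limit to lie in $\overline{Ax}$, and minimality of $\norm{A^0x}$ identifies it as $A^0x$ with $L=\norm{A^0x}$. Uniform convexity (through Kadec--Klee) then upgrades weak convergence together with norm convergence to strong convergence. For the second limit, I would use the exponential formula $S(t)x=\lim_n(J_{t/n})^nx$ together with the telescoping identity
\[
\frac{x-(J_{t/n})^nx}{t} = \frac{1}{n}\sum_{k=0}^{n-1}A_{t/n}y_k, \qquad y_k:=(J_{t/n})^kx,
\]
which writes the discrete right-derivative as a Ces\`aro average of Yosida approximants along the iterates $y_k$. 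Since $\norm{y_k-x}\leq t\norm{v}$ uniformly in $k\leq n$, combining the first limit with nonexpansivity of $J_{t/n}$ and uniform convexity one shows that the right-hand side converges strongly to $A^0x$, first as $n\to\infty$ and then as $t\to 0^+$.

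The main obstacle I expect is the second limit: the vector $(x-S(t)x)/t$ is not itself in any $A$-image, so the direct weak-compactness plus monotonicity argument available for $A_tx$ does not apply. Routing through the discrete Euler iterates, and invoking uniform convexity a second time to propagate strong convergence through the Ces\`aro average while interchanging $n\to\infty$ with $t\to 0^+$, is the crux of the proof; this interchange is legitimate only because of the uniform-in-$k$ closeness of $y_k$ to $x$ on the finite horizon, combined with the strong convergence established in the first limit.
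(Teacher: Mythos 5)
Your high-level decomposition --- show that both $(x-J_tx)/t$ and $(x-S(t)x)/t$ converge strongly to $A^0x$ as $t\to 0^+$, then subtract --- is a legitimate way to organize what Plant proves, and your treatment of the resolvent limit via reflexivity and Kadec--Klee is standard and correct. The gap is in the semigroup limit. You reduce it to the Ces\`aro average $\frac{1}{n}\sum_{k=0}^{n-1}A_{t/n}y_k$ with $y_k=(J_{t/n})^kx$, but the terms with $k>0$ are Yosida approximants at the iterates $y_k\neq x$, and the only Lipschitz control available is $\norm{A_\lambda u-A_\lambda w}\leq(2/\lambda)\norm{u-w}$, which with $\lambda=t/n$ and $\norm{y_k-x}\leq(kt/n)\norm{v}$ gives $\norm{A_{t/n}y_k-A_{t/n}x}\leq 2k\norm{v}$ --- useless for $k\sim n$. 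Uniform convexity can only force the summands to align with their average once you already know the average itself has norm close to $\vert Ax\vert=\norm{A^0x}$, i.e.\ once you have the \emph{lower} bound $\liminf_{t\to 0^+}\norm{x-S(t)x}/t\geq\vert Ax\vert$. (The upper bound $\norm{x-S(t)x}/t\leq\vert Ax\vert$ does follow from the monotonicity $\norm{A_{t/n}y_{k+1}}\leq\norm{A_{t/n}y_k}$, but the lower bound does not.) That lower bound is exactly the non-trivial content of Crandall's lemma, and its proof goes through a duality-pairing estimate of Miyadera/Brezis: $\limsup_{t\to 0^+}\langle(S(t)x-x)/t,\zeta^*\rangle\leq\langle y_0,x_0-x\rangle_s$ for $y_0\in Ax_0$, $\zeta^*\in J(x-x_0)$, tested against $x_0=J_\lambda x$, $y_0=A_\lambda x$. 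Neither your telescoping identity nor ``nonexpansivity plus uniform convexity'' reproduces this; without it the Ces\`aro average could in principle lose mass, and the interchange of $n\to\infty$ with $t\to 0^+$ that you flag is not where the real obstruction lies.

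For comparison, the paper's extraction follows Plant's original route, which never identifies either difference quotient with a vector $A^0x$: it works with the scalar $\vert Ax\vert$ and Clarkson's generalized angle $\alpha(a,b)=\norm{a/\norm{a}-b/\norm{b}}$, establishes (i) Cauchyness of $(x-J_tx)/t$ and (ii) the two-parameter estimate $\norm{(x-J_tx)/t-(x-S(s)x)/s}\to 0$ as $t\to 0^+$, $s/t\to 0^+$, and then combines the two through the intermediate scale $\sqrt t$. The Miyadera/Crandall lower bound enters (ii) through Plant's integral inequality $\norm{J_\lambda x-S(t)x}\leq(1-t/\lambda)\norm{x-J_\lambda x}+(2/\lambda)\int_0^t\norm{x-S(s)x}\,\mathrm{d}s$ and the geometric estimate $\norm{a+b}\leq(1-2\eta(\alpha(a+b,a)))\norm{a}+\norm{b}$. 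If you reintroduce the duality estimate to supply the missing lower bound, your route becomes a valid reorganization of essentially the same ingredients; the scalar-plus-angle formulation the paper uses is chosen in part because it avoids having to speak about the possibly ill-behaved minimal selection $A^0$ directly.
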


\begin{theorem}[Reich \cite{Rei1981}]\label{thm:Reich}
Let $X$ be uniformly convex, $A$ be an accretive operator that satisfies the range condition $(RC)$ and let $x\in\mathrm{dom}A$. Then
\[
\lim_{t\to\infty}\frac{\norm{J_t x-S(t)x}}{t}= 0.
\]
\end{theorem}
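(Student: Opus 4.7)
The approach is to formalize Reich's original proof in an extension of the semi-constructive system $H^\omega_{i,p}$ enriched with a constant for a modulus of uniform convexity $\eta$ of $X$ and, as needed, with the duality-pairing apparatus of Section \ref{sec:jinterp} or \ref{sec:supremumIP}, and then to apply Theorem \ref{thm:metatheoremSCI} to extract a polynomial rate of convergence for $\norm{J_t x - S(t) x}/t \to 0$.

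For the quantitative backbone I expect to use that for $x \in \mathrm{dom}(A)$ with $y \in Ax$, both slopes $\alpha(t) := \norm{x - J_t x}/t$ and $\beta(t) := \norm{x - S(t) x}/t$ are bounded by a constant multiple of $\norm{y}$, using Lemma \ref{lem:basicProp}(8) for $\alpha$ and an analogous semigroup estimate following from Lemma \ref{lem:semigroupprop}(1) (with $s = 0$) for $\beta$. Reich's argument identifies a common asymptotic value $d := \lim_{t \to \infty} \alpha(t) \in [0, \norm{y}]$ to which $\beta(t)$ also tends. In a uniformly convex space, the equality of these two norm-limits combined with the geometric compatibility furnished by the semigroup identity of Lemma \ref{lem:semigroupprop}(5) --- so that a suitable midpoint also has norm approaching $d$ --- forces the vectors $(x - J_t x)/t$ and $(x - S(t)x)/t$ to be close, and a triangle inequality then yields the conclusion.

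The main obstacle is quantitative access to the a priori unknown limit $d$. Here the key point is that $\alpha(t) = \norm{A_t x}$ is nonincreasing in $t$ --- a fact provable in $H^\omega_{i,p}$ via the resolvent identities of Lemma \ref{lem:basicProp}(5) --- and bounded, so the standard monotone-convergence rate supplies an explicit threshold, polynomial in $\norm{y}$, after which $\alpha(t)$ varies arbitrarily little. This is, I believe, the ``logical particularity'' alluded to in the introduction: although $\norm{J_t x - S(t)x}/t$ itself is not monotone, its convergence reduces to a monotone slope convergence plus a uniform-convexity step modulated by $\eta$, so the full rate (rather than only metastability) survives the extraction. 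Feeding the data $b \geq \max\{\norm{x}, \norm{y}\}$, $\eta$ and the Crandall--Liggett rate $\omega^S$ into Theorem \ref{thm:metatheoremSCI} applied to the formalized proof then yields the announced polynomial rate $\Phi(b, \eta, \omega^S, k)$ such that $\norm{J_t x - S(t)x}/t \leq 2^{-k}$ for all $t \geq \Phi(b, \eta, \omega^S, k)$.
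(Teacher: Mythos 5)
Your sketch misidentifies the quantitative crux. You claim that because $\alpha(t)=\norm{A_t x}$ is nonincreasing and bounded, ``the standard monotone-convergence rate supplies an explicit threshold, polynomial in $\norm{y}$, after which $\alpha(t)$ varies arbitrarily little.'' That is a logical mistake: a bounded monotone sequence of reals has no computable rate of convergence in general, and no polynomial (or any) threshold can depend only on an upper bound for the sequence. What monotonicity gives you is only a rate of \emph{metastability}, which would not yield the claimed full rate for $\norm{J_t x - S(t)x}/t \to 0$. The paper instead identifies the relevant asymptotic value not as $\lim_{t\to\infty}\norm{A_t x}$ but as $d := d(0,\mathrm{ran}\,A)=\inf\{\norm{z}\,:\,z\in\mathrm{ran}\,A\}$, and the rate in Lemma \ref{lem:resolventROCInf} is extracted only under the additional quantitative hypothesis of a function $f:\mathbb{R}_{>0}\to\mathbb{N}$ satisfying $\forall\varepsilon>0\,\exists(y,z)\in A\,(\norm{y},\norm{z}\leq f(\varepsilon)\land\norm{z}-d\leq\varepsilon)$ (together with an upper bound $E\geq d$). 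This $f$, arising from the monotone functional interpretation of the statement that $d$ is the infimum, is the indispensable extra input that your proposal omits; without it the resolvent slope has no rate and the whole analysis collapses. Your announced rate $\Phi(b,\eta,\omega^S,k)$ is therefore of the wrong arity: the paper's final bound $\Phi(\varepsilon,b,\eta,E,f)$ depends essentially on $f$ and $E$.

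The ``logical particularity'' you are reaching for is also not the monotonicity of $\alpha$ but the fact that the only genuinely classical step in Reich's proof is a case distinction $d=0$ versus $d>0$, both branches of which are essentially constructive and hence admit rates; the two rates can then be smoothed into a single rate by instantiating the bound on the $>0$-branch at the threshold witnessed constructively in the $=0$-branch. This is Lemma \ref{lem:CauchyRes} and the discussion in the last subsection. Structurally you are also missing the actual uniform-convexity step: the paper does not use a midpoint/norm argument via the semigroup law (\ref{lem:semigroupprop})(5) but Lemma \ref{lem:reichLem}, which converts the modulus $\eta$ into a duality-pairing inequality $2\eta(\varepsilon)\leq 1-\langle y,j\rangle$ and then (Lemma \ref{lem:CauchyIntRes}, \ref{lem:CauchyRes}, \ref{lem:Reich32quant}) shows that $J_t x/t$ is Cauchy with a computable rate and that its limit $-v_x$ is also the asymptotic slope of the semigroup; the conclusion follows by combining these two rates. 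You would need to incorporate the $f$-and-$E$ data and redo the uniform-convexity part along these lines before the extraction via Theorem \ref{thm:metatheoremSCI} can go through.
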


A usual application of negative translation and monotone functional interpretation as used by the metatheorems suggest the extractability of ``metastability-like'' rates here (provided that the proof formalizes in the underlying systems). However, as we will see, classical logic features in these proofs only in two ways: at first, it features in some of the basic underlying convergence results in which case the limits are decreasing and a rate of convergence can thus nevertheless be obtained using the metatheorem from Theorem \ref{thm:metatheoremSC}. For both results, the proof then proceeds via a case distinction on real numbers between $=0$ and $>0$. In both results, the proofs for the ``$=0$''-cases are trivial and rates of convergence can be immediately extracted. While the proofs for the ``$>0$''-cases are nontrivial, they are nevertheless essentially constructive which allows, through the use of the semi-constructive metatheorem of Theorem \ref{thm:metatheoremSCI}, for the extraction of full rates of convergence for both limits exhibited above, under the appropriate quantitative reformulations of the ``$>0$''-assumption, respectively. So, a rate of convergence can be obtained in either case, for both results. Only in the combination of these rates to a rate for the full result, the issues from the use of classical logic could feature but as we will see, in both cases the rates can be smoothed to be combined into a full rate of convergence for the whole result.

\medskip

The next two subsections now present the extractions of the quantitative results and in that context do not explicitly focus on the logical particularities of the extraction which will only be discussed in the last subsection. In that way, to present these results in a way more amenable to the usual literature of the theory of semigroups, we also move to using $\varepsilon$'s for the errors instead of $2^{-k}$ or similar constructions using natural numbers.

\medskip

In general, the main assumption featuring in both results is the uniform convexity of the underlying space which can be treated, as extensively discussed in the proof mining literature starting from the earliest works on the treatment of abstract spaces (see \cite{Koh2005}), by a so-called modulus of uniform convexity:

\begin{definition}
A modulus of uniform convexity for a space $X$ is a mapping $\eta:(0,2]\to (0,1]$ such that 
\[
\forall\varepsilon\in (0,2]\forall x,y\in X\left( \norm{x},\norm{y}\leq 1\land \norm{x-y}\geq\varepsilon\rightarrow\norm{\frac{x+y}{2}}\leq 1-\eta(\varepsilon)\right).
\]
\end{definition}

Of course, the rates in general will then depend on such a modulus. It should be further noted that this modulus is conceptually related to the common analytic notion of a modulus of convexity $\delta:[0,2]\to [0,1]$ (implicit already used in e.g.\ \cite{Cla1936}) defined as
\[
\delta(\varepsilon):=\inf\left\{1-\norm{x+y}/2\mid \norm{x}=\norm{y}=1,\norm{x-y}=\varepsilon\right\}.
\]
In fact, as is well-known, uniformly convex spaces are characterized by the property that $\delta(\varepsilon)>0$ whenever $\varepsilon>0$ and the modulus of uniform convexity $\eta$ effectively provides a witness for this inequality in the form of a lower bound, i.e.\ that $\delta(\varepsilon)\geq\eta(\varepsilon)>0$ for $\varepsilon\in (0,2]$.

The proofs of the results of both Plant and Reich make an essential use of $\delta$ but closer inspection reveals that they only rely on a lower bound on $\delta(\varepsilon)$ greater than $0$ which therefore can be substituted by the modulus of uniform convexity $\eta$. Note that $\eta$ can be assumed to be nondecreasing which we will do w.l.o.g.\ in the following. In that case, one in particular has that $\eta(\varepsilon)<\eta(\delta)$ implies $\varepsilon\leq\delta$.

\subsection{An analysis of Plant's result}

In this subsection, if not said otherwise, let $X$ be a fixed Banach space, $A$ be a fixed accretive operator that satisfies the range condition $(RC)_{\lambda_0}$ and let $S$ be the semigroup on $\overline{\mathrm{dom}A}$ generated by $A$ using the Crandall-Liggett formula. The proof of Plant's result now proceeds by establishing that the sequence 
\[
\frac{x-J_tx}{t},\quad (t\to 0^+)
\]
is Cauchy and that we have the limit
\[
\lim_{t,s/t\to 0^+}\norm{\frac{x-J_tx}{t}-\frac{x-S(s)x}{s}}= 0.
\]
Both results rely crucially on the existence and equality of the limits
\[
\lim_{t\to 0^+}\frac{\norm{x-J_tx}}{t}\text{ and }\lim_{t\to 0^+}\frac{\norm{x-S(t)x}}{t}.
\]
The first sequence is nondecreasing for $t\to 0^+$ (see e.g.\ \cite{CP1972}) and bounded by $\norm{v}$ for $v\in Ax$ witnessing $x\in\mathrm{dom}A$ (see e.g.\ \cite{Bar1976}). Following \cite{Cra1973}, we denote the first limit by $\vert Ax\vert$ which naturally satisfies $\vert Ax\vert\leq\norm{v}$. The second limit was shown to coincide with $\vert Ax\vert$ in \cite{Cra1973}.

\medskip

Now, the proof given in \cite{Pla1981} crucially relies on the use of the limit operator $\vert Ax\vert$ and some elementary properties thereof. For the following, we denote the expression $(x-J_tx)/t$ (which is just the Yosida approximate) by $A_tx$, in contrast to Plant's notation.

\medskip

As discussed in \cite{Pis2022}, one central theoretical obstacle in treating accretive and monotone operators is the use of extensionality for such operators. While this will be discussed in more detail in the later logical remarks, we also find here that the main convergence principle
\[
\norm{A_tx}\to\vert Ax\vert\text{ for }t\to 0^+\text{ with }x\in\mathrm{dom}A,
\]
on which the proof of Plant relies, can be recognized as a particular version of such a kind of extensionality statement, namely it can be shown that it is provably equivalent to the lower semicontinuity on $\mathrm{dom}A$ of the operator $\vert A\cdot\vert$ associated with $A$ (see Proposition \ref{pro:ROCYosida} later on).

As in the case of the functional $\langle\cdot,\cdot\rangle_s$, the logical methodology based on the monotone Dialectica interpretation now implies the following quantitative version of this statement: under this interpretation, the statement is upgraded to the existence of a ``modulus of uniform lower semicontinuity'' $\varphi:\mathbb{R}_{>0}\times \mathbb{N}\to\mathbb{R}_{>0}$, i.e.\ 
\begin{gather*}
\forall b\in\mathbb{N},\varepsilon\in\mathbb{R}_{>0},(x,u),(y,v)\in A\\
\left(\norm{x},\norm{u},\norm{y},\norm{v}\leq b\land\norm{x-y}\leq \varphi(\varepsilon,b)\rightarrow \vert Ax\vert-\vert Ay\vert\leq\varepsilon\right),
\end{gather*} 
which, as discussed already in the context of $\langle\cdot,\cdot\rangle_s$, is essentially a modulus of uniform continuity.

Based on the above mentioned equivalence, this modulus can then be used to derive a rate of convergence for the Yosida approximates towards $\vert Ax\vert$.

\begin{lemma}\label{lem:resolventROC}
Let $\varphi$ be a modulus of uniform continuity for $\vert A\cdot\vert$ and let $n$ satisfy $n\geq\norm{c},\norm{d},\lambda_0,\widetilde{\gamma}$ for $(c,d)\in A$ and $0<\widetilde{\gamma}<\lambda_0$. Then for $x\in\mathrm{dom}A$ with $v\in Ax$ and $b\in\mathbb{N}^*$\footnote{Throughout, we write $\mathbb{N}^*$ for the natural numbers without $0$.} with $b\geq\norm{x},\norm{v}$, we have
\[
\forall \varepsilon>0\forall t\in \left(0, \varphi_1(\varepsilon,b,n,\varphi)\right]\left(\vert Ax\vert-\frac{\norm{x-J^A_tx}}{t}\leq \varepsilon\right)
\]
where
\[
\varphi_1(\varepsilon,b,n,\varphi):=\min\{\varphi(\varepsilon,b+2n+3n^2)/b,\lambda_0/2\}.
\]
\end{lemma}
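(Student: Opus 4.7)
The strategy is to sandwich $\norm{A_tx}$ between $|A(J_tx)|$ and $|Ax|$, then bring the outer two together via the modulus $\varphi$. Since $x\in\mathrm{dom}A$ and $t<\lambda_0$, the range condition $(RC)_{\lambda_0}$ together with axiom (II) yields $A_tx:=(x-J_tx)/t\in A(J_tx)$, so $(J_tx,A_tx)\in A$. Using that the map $s\mapsto\norm{A_sy}$ is nondecreasing as $s\to 0^+$ with limit $|Ay|$ for any $y\in\mathrm{dom}A$, one gets both $|A(J_tx)|\leq\norm{A_tx}$ (apply the bound to $y=J_tx$ with the element $A_tx\in A(J_tx)$) and $|Ax|\geq\norm{A_tx}$. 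Consequently
\[
0\leq |Ax|-\frac{\norm{x-J_tx}}{t}\leq |Ax|-|A(J_tx)|,
\]
reducing everything to estimating the right-hand side by $\varepsilon$ via $\varphi$.

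Next, Lemma \ref{lem:basicProp}(8) gives $\norm{x-J_tx}\leq t\norm{v}\leq tb$ and $\norm{A_tx}\leq\norm{v}\leq b$. To invoke $\varphi$ on the pairs $(x,v),(J_tx,A_tx)\in A$ one needs a common upper bound $b'$ on $\norm{x},\norm{v},\norm{J_tx},\norm{A_tx}$. Three of these are already bounded by $b$; for $\norm{J_tx}$ the plan is to reuse the majorization calculation carried out in the proof of Theorem \ref{thm:metatheoremSC}, namely
\[
\norm{J^A_\gamma y}\leq\norm{y}+2\norm{c_X}+(2\tilde{\gamma}+\gamma)\norm{d_X}.
\]
Substituting $y=x$, $\gamma=t\leq\lambda_0/2\leq n$, and the assumed bounds $\norm{c_X},\norm{d_X},\tilde{\gamma},\lambda_0\leq n$, one obtains $\norm{J_tx}\leq b+2n+3n^2$ uniformly in $t\in(0,\lambda_0/2]$. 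Thus $b':=b+2n+3n^2$ serves as the required common bound.

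With $b'$ fixed, the defining property of a modulus of uniform continuity for $|A\cdot|$ immediately yields $|Ax|-|A(J_tx)|\leq\varepsilon$ whenever $\norm{x-J_tx}\leq\varphi(\varepsilon,b')$. Since $\norm{x-J_tx}\leq tb$, it suffices that $t\leq\varphi(\varepsilon,b')/b$; combined with the constraint $t\leq\lambda_0/2$ (needed to keep the majorant for $\norm{J_tx}$ valid and to stay strictly below $\lambda_0$) this is precisely $t\leq\varphi_1(\varepsilon,b,n,\varphi)$.

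The only genuine difficulty is finding a uniform bound on $\norm{J_tx}$ that depends solely on $b$ and $n$ and not on $t$ or on further unquantified data, which is what forces the specific shape $b+2n+3n^2$ of the second argument of $\varphi$; once this majorant is recognized as the one already established in the proof of Theorem \ref{thm:metatheoremSC}, the remainder of the argument is routine.
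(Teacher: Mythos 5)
Your proposal is correct and follows essentially the same route as the paper's proof: establish $A_tx\in A(J_tx)$, obtain the uniform bound $\|J_tx\|\le b+2n+3n^2$ from the resolvent majorization, and then feed $\|x-J_tx\|\le tb\le\varphi(\varepsilon,b+2n+3n^2)$ into the modulus $\varphi$ applied to the pairs $(x,v)$ and $(J_tx,A_tx)$. The only minor slip is a citation: the resolvent bound $\|J^A_\gamma y\|\le\|y\|+2\|c_X\|+(2\tilde\gamma+\gamma)\|d_X\|$ comes from Remark \ref{rem:modelRem} (the bound in the proof of Theorem \ref{thm:metatheoremSC} is the analogous one for $S(t)$), but the estimate itself and its use are exactly as in the paper.
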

\begin{proof}
Let $\varepsilon$ be given and let $t\leq\varphi_1(\varepsilon,b,n,\varphi)$. We at first have $\norm{A_tx}\leq\norm{v}\leq b$ as well as
\begin{align*}
\norm{J_tx}&\leq \norm{x}+2\norm{c}+\left(2\widetilde{\gamma}+t\right)\norm{d}\\
&\leq \norm{x}+2\norm{c}+\left(2\widetilde{\gamma}+\lambda_0\right)\norm{d}\\
&\leq \norm{x}+2n+\left(2n+n\right)n\\
&\leq \norm{x}+2n+3n^2
\end{align*}
using Lemma \ref{lem:basicProp} (as $t<\lambda_0$). Now as $A_tx\in AJ_tx$, we have $\vert AJ_tx\vert\leq\norm{A_tx}$ and thus
\[
\vert Ax\vert -\norm{A_tx}\leq\vert Ax\vert -\vert AJ_tx\vert.
\]
Now, we get
\[
\norm{x-J_tx}\leq t\norm{v} \leq \varphi_1(\varepsilon,b,n,\varphi)b\leq \varphi(\varepsilon,b+2n+3n^2)
\]
and thus, as $v\in Ax$ and $A_t x\in AJ_tx$ with $\norm{x},\norm{v},\norm{J_tx},\norm{A_tx}\leq b+2n+3n^2$, we have
\[
\vert Ax\vert -\norm{A_tx}\leq\vert Ax\vert -\vert AJ_tx\vert\leq \varepsilon
\]
which is the claim.
\end{proof}

As mentioned before, the fact that 
\[
\lim_{t\to 0^+}\frac{\norm{x-S(t)x}}{t}=\vert Ax\vert
\]
was proved by Crandall in \cite{Cra1973} and the proof proceeds by establishing that
\[
\frac{\norm{x-S(t)x}}{t}\leq\vert Ax\vert
\]
for any $t>0$ as well as
\[
\liminf_{t\to 0^+}\frac{\norm{S(t)x-x}}{t}\geq\vert Ax\vert
\]
and in that way crucially relies on the limit operator $\vert A\cdot\vert$ as well. The latter of these results relies on a result established by Miyadera in \cite{Miy1971}\footnote{The result goes back to earlier work by Brezis \cite{Bre1971} with a special case already contained in \cite{CL1971b} and more general results proved in \cite{CP1972}.} that
\[
\limsup_{t\to 0^+}\left\langle\frac{S(t)x-x}{t},\zeta^*\right\rangle\leq\langle y_0,x_0-x\rangle_s
\]
for $y_0\in Ax_0$, $x\in\overline{\mathrm{dom}A}$ and $\zeta^*\in J(x-x_0)$.

The proof given by Crandall actually only invokes this result for $x\in\mathrm{dom}A$ and, for the proof of Plant's result, it is further sufficient to obtain it only for \emph{some} $\zeta^*\in J(x-x_0)$. Lastly, the proof relies crucially on the use of the functional $\langle\cdot,\cdot\rangle_s$ and in particular on the upper semicontinuity of this functional. In that way, based on the logical methodology that upgrades this upper semicontinuity to a modulus of uniform continuity, we extract the following quantitative version of the above fragment of Miyadera's result:

\begin{lemma}\label{lem:MiyadersQuant}
Let $\omega$ be such that
\[
\forall x,y,z\in X,b\in\mathbb{N},\varepsilon>0\left(\norm{x},\norm{z}\leq b\land\norm{x-y}\leq \omega(b,\varepsilon)\rightarrow\langle z,y\rangle_s\leq\langle z,x\rangle_s+\varepsilon\right).
\]
For $\zeta^*\in J(x-x_0)$ where $x\in\mathrm{dom}A$ with $v\in Ax$ and $y_0\in Ax_0$ where $\norm{x},\norm{v},\norm{x_0},\norm{y_0}\leq b$ for $b\in\mathbb{N}^*$:
\[
\forall \varepsilon>0\forall t\in \left(0,\psi(\varepsilon,b,\omega)\right]\left(\left\langle \frac{S(t)x-x}{t},\zeta^*\right\rangle\leq\langle y_0,x_0-x\rangle_s+\varepsilon\right)
\]
where
\[
\psi(\varepsilon,b,\omega):=\frac{\omega(2b,\varepsilon)}{2b}.
\]
\end{lemma}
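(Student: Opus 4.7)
The plan is to adapt Miyadera's classical argument~\cite{Miy1971} (see also~\cite{Bre1971,Cra1973,CL1971b}) while tracking the quantitative dependence on the modulus $\omega$. The starting point is the proposition at the end of Section~\ref{sec:jinterp} applied with $x := x - x_0$, $y := (S(t)x - x)/t$, $(u,v) := (v, y_0)$ and auxiliary parameter equal to $t$ itself, yielding
\[
\left\langle \frac{S(t)x - x}{t},\, j_{v, y_0}(x - x_0)\right\rangle \leq \norm{x - x_0}\,\frac{\norm{S(t)x - x_0} - \norm{x - x_0}}{t}.
\]
The elementary bound $a(b-a) \leq (b^2 - a^2)/2$ (valid for all real $a, b$, since $(b^2-a^2)/2 - a(b-a) = (b-a)^2/2 \geq 0$) then reduces the task to bounding $(\norm{S(t)x - x_0}^2 - \norm{x - x_0}^2)/(2t)$.

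The key estimate is the integrated accretivity inequality
\[
\norm{S(t)x - x_0}^2 - \norm{x - x_0}^2 \;\leq\; 2\int_0^t \langle y_0, x_0 - S(\tau)x\rangle_s\, d\tau + o(t),
\]
which is the integrated form of the standard $\frac{d^+}{d\tau}\norm{S(\tau)x - x_0}^2 \leq 2\langle y_0, x_0 - S(\tau)x\rangle_s$. To establish it formally inside $H^\omega_p$, I would work at the level of the Crandall--Liggett iterates $x_k^n := (J_{t/n})^k x$: axiom~(A) applied to the pair $(x_{k+1}^n, A_{t/n}x_k^n) \in A$ and $(x_0, y_0) \in A$, combined with axiom~$(J)$ and $(+)_4$, yields a single-step estimate of the form $\norm{x_{k+1}^n - x_0}^2 \leq \norm{x_k^n - x_0}^2 + (2t/n)\langle y_0, x_0 - x_{k+1}^n\rangle_s + O(1/n^2)$. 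Telescoping in $k$ and passing to the limit $n\to\infty$ via axiom~(S1), with the uniform boundedness of $\norm{x_k^n - x_0}$ by an explicit polynomial in $b$ and $t$, yields the displayed integrated form.

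Finally, the modulus $\omega$ is invoked to replace $\langle y_0, x_0 - S(\tau)x\rangle_s$ by $\langle y_0, x_0 - x\rangle_s + \varepsilon$ uniformly in $\tau \in [0, t]$: with $\norm{y_0} \leq b \leq 2b$ and $\norm{x_0 - x} \leq 2b$ together with $\norm{(x_0 - x) - (x_0 - S(\tau)x)} = \norm{S(\tau)x - x} \leq \tau\norm{v} \leq tb$, the hypothesis $t \leq \psi(\varepsilon, b, \omega) = \omega(2b, \varepsilon)/(2b)$ gives $tb \leq \omega(2b, \varepsilon)/2 \leq \omega(2b, \varepsilon)$, so applying $\omega$ at norm bound $2b$ delivers the substitution. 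Combining the three steps yields the claimed inequality, after absorbing the residual $o(1)$ error term (which can be incorporated by mildly shrinking $\psi$ if necessary, or bounded a priori by choosing $n$ sufficiently large uniformly in $t \leq \psi$).

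The main obstacle is the rigorous discrete-to-continuous passage in the integrated accretivity inequality: one must show that the sum of the $O(1/n^2)$ single-step errors vanishes uniformly as $n\to\infty$, and that the Riemann sum $(t/n)\sum_k \langle y_0, x_0 - x_{k+1}^n\rangle_s$ converges to $\int_0^t \langle y_0, x_0 - S(\tau)x\rangle_s\, d\tau$ using axiom~(S1) together with the continuity of $\tau \mapsto S(\tau)x$ established in Lemma~\ref{lem:semigroupprop}(3) and the modulus $\omega$. A secondary cosmetic point is the identity $\langle y_0, x_0 - x\rangle_s = \langle -y_0, x - x_0\rangle_s$ that naturally appears in the accretivity step; this holds semantically (since $J(-z) = -J(z)$) and, while not among the axioms $(+)_1$--$(+)_4$, may be used freely when reasoning in the model $\mathcal{S}^{\omega, X}$ in which the metatheorems validate the extracted bounds.
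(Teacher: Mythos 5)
Your approach is genuinely different from the paper's. The paper simply cites the inequality
\[
\langle S(t)x - x,\, j_{v,y_0}(x-x_0)\rangle \leq \int_0^t\langle y_0, x_0 - S(\tau)x\rangle_s\, \mathrm{d}\tau
\]
directly from Miyadera \cite{Miy1971} (noting only that the required properties of $\langle\cdot,\cdot\rangle_s$ are those in $(+)$) and then applies the modulus $\omega$ exactly as you do in your final step. You, by contrast, re-derive an equivalent form of this inequality from the axioms: starting from the proposition at the end of Section~\ref{sec:jinterp}, passing through the elementary $a(b-a)\leq(b^2-a^2)/2$ to the squared-norm bound $(\norm{S(t)x-x_0}^2-\norm{x-x_0}^2)/(2t)$, and then establishing the integrated accretivity estimate by telescoping along the Crandall--Liggett iterates and invoking (S1). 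This is more self-contained and makes the formalization story explicit, at the cost of the discrete-to-continuous passage. Two corrections: there is in fact no $o(t)$ term in the telescoped estimate, since the single-step inequality $\norm{x_{k+1}^n-x_0}^2\leq\norm{x_k^n-x_0}^2-\tfrac{2t}{n}\langle y_0,j\rangle$ has no remainder and the Riemann sum converges exactly, so the stated $\psi$ is recovered without any shrinking; and the Lipschitz estimate from Lemma~\ref{lem:semigroupprop}(1) is $\norm{S(\tau)x-x}\leq 2\tau\norm{v}$, not $\tau\norm{v}$, which with $t\leq\psi(\varepsilon,b,\omega)=\omega(2b,\varepsilon)/(2b)$ gives precisely $\norm{S(\tau)x-x}\leq\omega(2b,\varepsilon)$ as the paper uses. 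Your observation that the step from $\langle -y_0, j\rangle$ to $\langle y_0, x_0-x_{k+1}^n\rangle_s$ needs the identity $\langle -z,-w\rangle_s=\langle z,w\rangle_s$, which is valid semantically but not derivable from $(+)_1$--$(+)_4$, is a genuine and worthwhile point: the paper avoids it by outsourcing to \cite{Miy1971}, but a fully axiomatic proof along your lines would indeed require this additional property of $\langle\cdot,\cdot\rangle_s$ (or a reformulation of the chain that avoids the sign flip).
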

\begin{proof}
At first, given an $\varepsilon$, we get for any $t\in \left(0,\frac{\varepsilon}{2b}\right]$ and for all $v\in Ax$ with $\norm{x},\norm{v}\leq b$ that
\[
\norm{x-S(t)x}=\norm{S(0)x-S(t)x}\leq 2\norm{v}t\leq 2b \frac{\varepsilon}{2b}\leq \varepsilon
\]
by Lemma \ref{lem:semigroupprop}, (1). Now, as in Miyadera's proof from \cite{Miy1971}, we get
\[
\langle S(t)x-x,\zeta^*\rangle\leq\int^t_0\langle y_0,x_0-S(\tau)x\rangle_s\mathrm{d}\tau.
\]
Then for $b\geq\norm{x},\norm{v},\norm{x_0},\norm{y_0}$, we get
\[
\langle y_0,x_0-S(t)x\rangle_s\leq\langle y_0,x_0-x\rangle_s +\varepsilon
\]
for any $t\in \left(0,\psi(\varepsilon,b,\omega)\right]$ as by the above, we have
\[
\norm{S(t)x-x}\leq \omega(2b,\varepsilon)
\]
for all such $t$ by assumption on $\omega$ and since we trivially have $\norm{x-x_0}\leq 2b$. Thus in particular we have
\begin{align*}
\langle S(t)x-x,\zeta^*\rangle &\leq\int^t_0\langle y_0,x_0-S(\tau)x\rangle_s\mathrm{d}\tau\\
&\leq t\left( \langle y_0,x_0-x\rangle_s+\varepsilon\right)
\end{align*}
which gives the claim.
\end{proof}
Then, by following the proof given in \cite{Cra1973}, we obtain a quantitative version of the crucial direction
\[
\liminf_{t\to 0^+}\frac{\norm{S(t)x-x}}{t}\geq\vert Ax\vert
\]
of Crandall's proof. Now, already here, a case distinction on whether $\vert Ax\vert=0$ or $\vert Ax\vert >0$ features in the proof of Crandall and the following result first provides a quantitative result on the latter case.

\begin{lemma}\label{lem:semigroupROCpart}
Let $\omega$ be such that
\[
\forall x,y,z\in X,b\in\mathbb{N},\varepsilon>0\left(\norm{x},\norm{z}\leq b\land\norm{x-y}\leq \omega(b,\varepsilon)\rightarrow\langle z,y\rangle_s\leq\langle z,x\rangle_s+\varepsilon\right).
\]
Let further $\varphi$ be a modulus of uniform continuity for $\vert A\cdot\vert$ and let $n$ be as in Lemma \ref{lem:resolventROC}. Then for $x\in\mathrm{dom}A$ with $v\in Ax$ and $b\in\mathbb{N}^*$ with $b\geq\norm{x},\norm{v}$ and where $\vert Ax\vert\geq c$ for $c\in\mathbb{R}_{>0}$, we have
\[
\forall \varepsilon>0\forall t\in \left(0, \varphi_2'(\varepsilon,b,c,n,\varphi,\omega)\right]\left(\vert Ax\vert-\frac{\norm{x-S(t)x}}{t}\leq \varepsilon\right)
\]
where
\[
\varphi_2'(\varepsilon,b,c,n,\varphi,\omega):=\psi(\varepsilon c\min\{\varphi_1(\min\{\varepsilon/2,c/2\},b,n,\varphi),\lambda_0/2\}/4,b+2n+3n^2,\omega)
\]
with $\psi$ as in Lemma \ref{lem:MiyadersQuant} and $\varphi_1$ as in Lemma \ref{lem:resolventROC}.
\end{lemma}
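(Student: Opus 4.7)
The strategy is to quantitatively follow Crandall's proof from \cite{Cra1973} of the direction $\liminf_{t\to 0^+}\norm{S(t)x-x}/t \geq \vert Ax\vert$, feeding in the quantitative fragment of Miyadera's result already established in Lemma \ref{lem:MiyadersQuant} together with the rate for the Yosida approximates from Lemma \ref{lem:resolventROC}. The positivity assumption $\vert Ax\vert \geq c > 0$ enters only in order to be able to divide by $\norm{A_s x}$ safely; this is the only use of classical reasoning in the underlying proof, and making it constructive under the quantitative hypothesis is what produces a genuine rate here.

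Concretely, given $\varepsilon > 0$, I first choose an auxiliary parameter $s := \min\{\varphi_1(\min\{\varepsilon/2,c/2\},b,n,\varphi),\lambda_0/2\}$. By Lemma \ref{lem:resolventROC} this guarantees $\vert Ax\vert - \norm{A_s x} \leq \min\{\varepsilon/2,c/2\}$, hence $\norm{A_s x} \geq c/2 > 0$. I then plan to apply Lemma \ref{lem:MiyadersQuant} with the specific choices $y_0 := A_s x$ and $x_0 := J_s x$, which satisfy $y_0 \in Ax_0$ by the defining property of the resolvent. The upper bound $B := b + 2n + 3n^2$ on $\norm{J_s x}$ follows from the standard estimate used in the proof of Theorem \ref{thm:metatheoremSC}, while $\norm{A_s x} \leq \norm{v} \leq b$ by Lemma \ref{lem:basicProp}(8), so that $B$ majorizes $\norm{x},\norm{v},\norm{x_0},\norm{y_0}$ simultaneously.

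The heart of the argument is the identity $\langle A_s x, J_s x - x\rangle_s = -s\norm{A_s x}^2$, obtained by writing $J_s x - x = -s A_s x$ and applying axiom $(+)_2$ with $\alpha = -1/s$ and $y = 0$. Plugging this into the conclusion of Lemma \ref{lem:MiyadersQuant} with error parameter $\varepsilon' := sc\varepsilon/4$ yields, for every $t \in (0,\psi(\varepsilon',B,\omega)]$,
\[
\left\langle \frac{S(t)x - x}{t},\, j_{v,A_s x}(x - J_s x)\right\rangle \leq -s\norm{A_s x}^2 + \varepsilon'.
\]
Bounding the left-hand side below by $-\norm{S(t)x - x}\cdot\norm{x - J_s x}/t = -(\norm{S(t)x-x}/t)\cdot s\norm{A_s x}$ via the general inequality $\vert\langle z, j_{u,v}w\rangle\vert \leq \norm{z}\norm{w}$ from axiom $(J)$, and then dividing through by $s\norm{A_s x} \geq sc/2 > 0$, I obtain
\[
\frac{\norm{S(t)x - x}}{t} \geq \norm{A_s x} - \frac{\varepsilon'}{s\norm{A_s x}} \geq \norm{A_s x} - \frac{\varepsilon}{2}.
\]

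Combining this with $\norm{A_s x} \geq \vert Ax\vert - \varepsilon/2$ gives the desired $\vert Ax\vert - \norm{S(t)x - x\|/t \leq \varepsilon}$ and, after unpacking $s$ and $\varepsilon'$, exactly the rate $\varphi_2'(\varepsilon,b,c,n,\varphi,\omega)$ stated. The main obstacle is not any deep inequality but rather the careful bookkeeping of parameters: one must verify that the choice of $s$ made via $\varphi_1$ really enforces both $\norm{A_s x} \geq c/2$ (so division is legal) and $\norm{A_s x} \geq \vert Ax\vert - \varepsilon/2$, and that $B = b+2n+3n^2$ simultaneously majorizes the norms of all four points $x, v, J_s x, A_s x$ so Lemma \ref{lem:MiyadersQuant} is genuinely applicable with that single parameter.
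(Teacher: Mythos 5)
Your proposal is correct and follows essentially the same route as the paper: you invoke Lemma \ref{lem:MiyadersQuant} with $y_0=A_s x$, $x_0=J_s x$, use the identity $\langle A_sx,J_sx-x\rangle_s=-s\norm{A_sx}^2$ together with the Cauchy--Schwarz bound from axiom $(J)$, divide by $s\norm{A_sx}\ge sc/2$ (which is where the hypothesis $\vert Ax\vert\ge c>0$ and Lemma \ref{lem:resolventROC} enter), and combine with $\norm{A_sx}\ge\vert Ax\vert-\varepsilon/2$ to recover the stated rate $\varphi_2'$. The only presentational difference is that you fix the specific $s$ and $\varepsilon'$ up front and spell out the $(+)_2$ instantiation, whereas the paper keeps $\lambda$ and the Miyadera error as free parameters and specializes them at the end; the bookkeeping and all intermediate inequalities agree.
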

\begin{proof}
Using Lemma \ref{lem:MiyadersQuant}, we get 
\[
\forall \varepsilon>0\forall t\in \left(0,\psi(\varepsilon,b,\omega)\right]\left(\left\langle \frac{S(t)x-x}{t},\zeta^*\right\rangle\leq\langle y_0,x_0-x\rangle_s+\varepsilon\right)
\]
for $\norm{x}, \norm{v}, \norm{x_0}, \norm{y_0}\leq b$ and $\zeta^*\in J(x-x_0)$. Now, for $y_0=A_\lambda x$ and $x_0=J_\lambda x$ with $\lambda< \lambda_0$, we have
\[
\langle y_0,x_0-x\rangle_s=-\lambda\norm{A_\lambda x}^2
\]
as well as
\begin{align*}
\left\langle\frac{S(t)x-x}{t},\zeta^*\right\rangle&\geq -\norm{\frac{S(t)x-x}{t}}\norm{x-J_\lambda x}\\
&=-\norm{\frac{S(t)x-x}{t}}\lambda\norm{A_\lambda x}.
\end{align*}
Therefore, we obtain
\[
\forall \varepsilon>0\forall t\in \left(0,\psi(\varepsilon,b+2n+3n^2,\omega)\right]\left(\norm{\frac{S(t)x-x}{t}}\norm{A_\lambda x}\geq\norm{A_\lambda x}^2-\frac{\varepsilon}{\lambda}\right)
\]
for all such $\lambda$ since $b+2n+3n^2\geq\norm{J_\lambda x}$ and $b\geq\norm{A_\lambda x}$ as before. Since $\vert Ax\vert\geq c$, we have that for $\lambda\leq\min\{\varphi_1(c/2,b,n,\varphi),\lambda_0/2\}$ that 
\[
c/2=c-c/2\leq\vert Ax\vert -c/2\leq\norm{A_\lambda x}
\]
by Lemma \ref{lem:resolventROC}. Therefore, we have that
\[
\forall \varepsilon>0\forall t\in \left(0,\psi(\varepsilon,b+2n+3n^2,\omega)\right]\left(\norm{\frac{S(t)x-x}{t}}\geq\norm{A_\lambda x}-\frac{\varepsilon}{\lambda c/2}\right)
\]
for all $\lambda\leq\min\{\varphi_1(c/2,b,n,\varphi),\lambda_0/2\}$ and thus in particular
\begin{align*}
\vert Ax\vert-\norm{\frac{S(t)x-x}{t}}&\leq\vert Ax\vert-\norm{A_\lambda x}+\frac{\varepsilon}{\lambda c/2}\\
&\leq \delta/2 +\frac{\varepsilon}{\lambda c/2}
\end{align*}
for all $t\leq\psi(\varepsilon,b+2n+3n^2,\omega)$ and for all $\lambda\leq\min\{\varphi_1(\min\{\delta/2,c/2\},b,n,\varphi),\lambda_0/2\}$ and $\delta>0$. Thus, lastly, for 
\[
t\leq \psi(\varepsilon c\min\{\varphi_1(\min\{\varepsilon/2,c/2\},b,n,\varphi),\lambda_0/2\}/4,b+2n+3n^2,\omega)
\]
we have 
\[
\vert Ax\vert-\norm{\frac{S(t)x-x}{t}}\leq\varepsilon.\qedhere
\]
\end{proof}

For the other case, i.e.\ where $\vert Ax\vert=0$, it is immediately clear that for $\vert Ax\vert \leq\varepsilon$, we get 
\[
\vert Ax\vert-\frac{\norm{x-S(t)x}}{t}\leq\vert Ax\vert\leq\varepsilon
\]
for all $t$. However, this allows for a smoothening of the above case distinction (see the later logical remarks for further discussions of this) in the form of the following lemma:

\begin{lemma}\label{lem:semigroupROC}
Let $\omega$ be such that
\[
\forall x,y,z\in X,b\in\mathbb{N},\varepsilon>0\left(\norm{x},\norm{z}\leq b\land\norm{x-y}\leq \omega(b,\varepsilon)\rightarrow\langle z,y\rangle_s\leq\langle z,x\rangle_s+\varepsilon\right).
\]
Let further $\varphi$ be a modulus of uniform continuity for $\vert A\cdot\vert$ and let $n$ be as in Lemma \ref{lem:resolventROC}. Then for $x\in\mathrm{dom}A$ with $v\in Ax$ and $b\in\mathbb{N}^*$ with $b\geq\norm{x},\norm{v}$, we have
\[
\forall \varepsilon>0\forall t\in \left(0, \varphi_2(\varepsilon,b,n,\omega,\varphi)\right]\left(\vert Ax\vert-\frac{\norm{x-S(t)x}}{t}\leq \varepsilon\right)
\]
where
\[
\varphi_2(\varepsilon,b,n,\omega,\varphi):=\psi(\varepsilon^2\min\{\varphi_1(\varepsilon/2,b,n,\varphi),\lambda_0/2\}/4,b+2n+3n^2,\omega).
\]
with $\psi$ as in Lemma \ref{lem:MiyadersQuant} and $\varphi_1$ as in Lemma \ref{lem:resolventROC}.
\end{lemma}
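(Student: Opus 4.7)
The plan is to obtain Lemma \ref{lem:semigroupROC} as a smoothing of Lemma \ref{lem:semigroupROCpart} by making the case distinction on the value of $\vert Ax\vert$ with respect to the error $\varepsilon$ itself. Concretely, given $\varepsilon>0$ and $t\in(0,\varphi_2(\varepsilon,b,n,\omega,\varphi)]$, I split into the two cases $\vert Ax\vert\leq\varepsilon$ and $\vert Ax\vert>\varepsilon$.

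In the first case the desired estimate is immediate: since $\norm{x-S(t)x}/t\geq 0$, we have $\vert Ax\vert-\norm{x-S(t)x}/t\leq\vert Ax\vert\leq\varepsilon$, so no use of $t$ or of the semigroup structure is actually needed here. In the second case, where $\vert Ax\vert>\varepsilon$, I apply Lemma \ref{lem:semigroupROCpart} with the parameter $c$ instantiated by the current $\varepsilon$. Unfolding the definition in that lemma gives
\[
\varphi_2'(\varepsilon,b,\varepsilon,n,\varphi,\omega)=\psi\bigl(\varepsilon^2\min\{\varphi_1(\min\{\varepsilon/2,\varepsilon/2\},b,n,\varphi),\lambda_0/2\}/4,\,b+2n+3n^2,\,\omega\bigr),
\]
which is precisely the $\varphi_2(\varepsilon,b,n,\omega,\varphi)$ defined in the statement. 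Hence $t\leq\varphi_2'(\varepsilon,b,\varepsilon,n,\varphi,\omega)$, and the previous lemma delivers $\vert Ax\vert-\norm{x-S(t)x}/t\leq\varepsilon$ directly.

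Since the bound in both cases is at most $\varepsilon$ and the same $t$-threshold works uniformly, the case distinction can be dropped from the statement, yielding the uniform rate $\varphi_2$. There is no real obstacle here: the whole content of the lemma is the observation that by taking $c=\varepsilon$ in Lemma \ref{lem:semigroupROCpart} the nontrivial branch automatically absorbs the trivial branch, producing a single quantitative rate of convergence $\varphi_2$ whose validity no longer depends on any positive lower bound on $\vert Ax\vert$. The logical significance of this smoothing step --- namely that it eliminates the use of the classical case distinction on real numbers --- is what will be discussed in the later remarks, but the mathematical argument itself is just the two-line case analysis above.
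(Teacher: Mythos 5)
Your proof is correct and is essentially identical to the paper's own: the paper also splits on whether $\vert Ax\vert\leq\varepsilon$ or not, handles the first case trivially, and in the second case invokes Lemma \ref{lem:semigroupROCpart} with $c$ instantiated as $\varepsilon$ (equivalently, as $\varepsilon/2$ in other smoothing lemmas), observing that $\varphi_2'$ then collapses to the stated $\varphi_2$. Your explicit check that $\varphi_2'(\varepsilon,b,\varepsilon,n,\varphi,\omega)=\varphi_2(\varepsilon,b,n,\omega,\varphi)$ is exactly the computation the paper leaves implicit.
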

\begin{proof}
Let $\varepsilon$ be given. Then either $\vert Ax\vert\leq\varepsilon$ whereas 
\[
\vert Ax\vert-\frac{\norm{x-S(t)x}}{t}\leq \varepsilon
\]
for any $t$. Otherwise, we have $\vert Ax\vert\geq\varepsilon$ and thus from Lemma \ref{lem:semigroupROCpart}, with $c=\varepsilon$, it follows that
\[
\vert Ax\vert-\frac{\norm{x-S(t)x}}{t}\leq \varepsilon
\]
for all
\[
t\leq \psi(\varepsilon^2\min\{\varphi_1(\varepsilon/2,b,n,\varphi),\lambda_0/2\}/4,b+2n+3n^2,\omega).\qedhere
\]
\end{proof}

Using those two results, we can then give a quantitative version of the partial results on the way to Plants results discussed above, in the form of a rate of Cauchyness and a rate of convergence, respectively.

\medskip

In that context, we follow the notation used in \cite{Pla1981} and write
\[
\alpha(a,b)=\norm{\frac{a}{\norm{a}}-\frac{b}{\norm{b}}}\leq 2
\]
where $a,b\neq 0$ for the generalized angle of Clarkson \cite{Cla1936}. Similar to the proof given in \cite{Pla1981}, we rely on two fundamental inequalities of $\alpha$:
\begin{lemma}[essentially \cite{Cla1936}]\label{lem:ClarkLemma}
Let $a,b\neq 0$. Then
\[
\vert\norm{a}\alpha(a,b)-\norm{a-b}\vert\leq\vert\norm{a}-\norm{b}\vert.
\]
If further $a+b\neq 0$, then
\[
\norm{a+b}\leq (1-2\eta(\alpha(a+b,a)))\norm{a}+\norm{b}
\]
where $\eta$ is a modulus of uniform convexity for the space $X$.
\end{lemma}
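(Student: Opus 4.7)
For the first inequality, I would reduce to the reverse triangle inequality. Pulling the positive scalar $\norm{a}$ inside the norm gives $\norm{a}\alpha(a,b)=\norm{a-(\norm{a}/\norm{b})b}$, so the quantity to bound equals $\vert\norm{a-(\norm{a}/\norm{b})b}-\norm{a-b}\vert$. The reverse triangle inequality applied to the two vectors $a-(\norm{a}/\norm{b})b$ and $a-b$ bounds this by the norm of their difference, which simplifies to $\vert 1-\norm{a}/\norm{b}\vert\cdot\norm{b}=\vert\norm{b}-\norm{a}\vert$. This gives the claim.

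For the second inequality, set $c=a+b$ and consider the unit vectors $u=a/\norm{a}$ and $w=c/\norm{c}$. By definition we have $\norm{u-w}=\alpha(c,a)$, so the defining property of the modulus of uniform convexity yields $\norm{u+w}\leq 2(1-\eta(\alpha(c,a)))$. Multiplying through by $\norm{a}$ gives
\[
\norm{a+\frac{\norm{a}}{\norm{c}}c}=\norm{a}\cdot\norm{u+w}\leq 2\norm{a}(1-\eta(\alpha(c,a))).
\]
The key algebraic step is then to re-express the vector on the left using the substitution $a=c-b$, namely
\[
a+\frac{\norm{a}}{\norm{c}}c=(c-b)+\frac{\norm{a}}{\norm{c}}c=\frac{\norm{c}+\norm{a}}{\norm{c}}c-b,
\]
and apply the reverse triangle inequality to this representation, producing the lower bound $\norm{c}+\norm{a}-\norm{b}$. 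Chaining with the upper bound above and rearranging yields $\norm{c}\leq(1-2\eta(\alpha(c,a)))\norm{a}+\norm{b}$, which is the claim.

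The only non-routine choice in the proof is the algebraic rewriting in the second part. If one instead expands $a+(\norm{a}/\norm{c})c$ via $c=a+b$ as $(1+\norm{a}/\norm{c})a+(\norm{a}/\norm{c})b$ and applies the reverse triangle inequality in that form, one arrives after simplification at a conclusion of the shape $\norm{c}\leq\norm{a}+\norm{b}-2\norm{c}\eta(\alpha(c,a))$, in which the wrong norm multiplies the $\eta$-term. Using the substitution $a=c-b$ is precisely what replaces this by the factor $\norm{a}$ required by the statement.
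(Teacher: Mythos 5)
Your proof is correct, and both inequalities are established cleanly. The paper does not reproduce an argument for this lemma---it simply cites Clarkson's 1936 paper---so there is no proof in the text to compare yours against, but the route you take (pulling $\norm{a}$ inside the norm and applying the reverse triangle inequality for the first claim; applying the modulus of uniform convexity to the two unit vectors $a/\norm{a}$ and $(a+b)/\norm{a+b}$, scaling by $\norm{a}$, and then substituting $a=c-b$ before applying the reverse triangle inequality for the second) is the standard elementary one and is exactly what is needed to produce the factor $\norm{a}$ in front of the $\eta$-term.

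One small remark on your closing aside: if one does expand via $c=a+b$ and applies the reverse triangle inequality to $\tfrac{\norm{c}+\norm{a}}{\norm{c}}a+\tfrac{\norm{a}}{\norm{c}}b$, the resulting chain simplifies (after dividing by $\norm{a}$ and rearranging) to $\norm{c}\geq\norm{a}-\norm{b}+2\norm{c}\eta(\alpha(c,a))$, i.e.\ a \emph{lower} bound on $\norm{c}$ with the wrong norm on the $\eta$-term, rather than the upper bound of the shape you wrote. The diagnosis---that the substitution $a=c-b$ is what ensures the correct norm multiplies the $\eta$-term---is nonetheless right, and the main argument is unaffected.
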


\begin{lemma}\label{lem:resCauchypart}
Let $X$ be a uniformly convex Banach space with a modulus of uniform convexity $\eta:(0,2]\to (0,1]$. Let further $\varphi$ be a modulus of uniform continuity for $\vert A\cdot\vert$ and let $n$ be as in Lemma \ref{lem:resolventROC}. Let further $x\in\mathrm{dom}A$ with $v\in Ax$ and $b\in\mathbb{N}^*$ with $b\geq\norm{x},\norm{v}$. Suppose $\vert Ax\vert\geq c$ for $c\in\mathbb{R}_{>0}$. Then
\[
\forall \varepsilon>0\forall t\in \left(0,\varphi_3'(\varepsilon,b,c,\eta,n,\varphi)\right]\forall s\in (0,t)\left(\norm{\frac{x-J_tx}{t}-\frac{x-J_sx}{s}}\leq \varepsilon\right)
\]
where
\begin{align*}
\varphi_3'(\varepsilon,b,c,\eta,n,\varphi):=\min\{&\varphi_1(\varepsilon/2,b,n,\varphi),\varphi_1(\eta(\min\{\varepsilon/2b,2\}) c/2,b,n,\varphi),\\
&\varphi_1(c/2,b,n,\varphi),\lambda_0/2\}
\end{align*}
with $\varphi_1$ as in Lemma \ref{lem:resolventROC}.
\end{lemma}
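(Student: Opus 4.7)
The plan is to follow Plant's argument \cite{Pla1981} in a quantitative fashion, reading each of the three $\varphi_1$-subterms in the definition of $\varphi_3'$ as controlling one ingredient. The strategy is: first localize $\|A_t x\|$ and $\|A_s x\|$ near $|Ax|$ using Lemma \ref{lem:resolventROC}; next use accretivity of $A$ to produce a decomposition to which the second inequality of Lemma \ref{lem:ClarkLemma} applies and thereby bound the generalized angle $\alpha(A_t x, A_s x)$; finally recombine via the first inequality of Lemma \ref{lem:ClarkLemma}.

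First, I would apply Lemma \ref{lem:resolventROC} three times to extract from $t \leq \varphi_3'$ the three bounds (i) $|Ax| - \|A_t x\| \leq \varepsilon/3$, (ii) $|Ax| - \|A_t x\| \leq \eta(\min\{\varepsilon/(3b), 2\}) c/2$, and (iii) $|Ax| - \|A_t x\| \leq c/2$, the last forcing $\|A_t x\| \geq c/2$ via the assumption $|Ax| \geq c$. Since $\|A_\gamma x\|$ is nondecreasing as $\gamma \to 0^+$ with supremum $|Ax|$, for any $s \in (0,t)$ all three inequalities remain valid with $\|A_s x\|$ in place of $\|A_t x\|$, and moreover $0 \leq \|A_s x\| - \|A_t x\| \leq \varepsilon/3$.

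Next, applying accretivity of $A$ to the pairs $(J_s x, A_s x), (J_t x, A_t x) \in A$ with $\lambda = s$ and using the identities $J_s x - J_t x + s(A_s x - A_t x) = (t-s)A_t x$ and $J_s x - J_t x = t A_t x - s A_s x$, one obtains $\|t A_t x - s A_s x\| \leq (t-s)\|A_t x\|$. Dividing by $t$ and setting $r := s/t \in (0,1)$ yields
\[
\|A_t x - r A_s x\| \leq (1-r)\|A_t x\|.
\]
Using the decomposition $A_t x = (rA_s x) + (A_t x - rA_s x)$, both summands nonzero by Step 1, and noting $\alpha(A_t x, rA_s x) = \alpha(A_t x, A_s x)$ since $r > 0$, the second inequality of Lemma \ref{lem:ClarkLemma} applied with $a := rA_s x$, $b := A_t x - rA_s x$ gives
\[
\|A_t x\| \leq (1 - 2\eta(\alpha(A_t x, A_s x)))\, r\|A_s x\| + (1-r)\|A_t x\|.
\]
After cancellation and division by $r$, this rearranges to $2\eta(\alpha(A_t x, A_s x))\|A_s x\| \leq \|A_s x\| - \|A_t x\|$. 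Substituting $\|A_s x\| \geq c/2$ and the monotonicity bound $\|A_s x\| - \|A_t x\| \leq |Ax| - \|A_t x\| \leq \eta(\min\{\varepsilon/(3b),2\})c/2$ from (ii), one arrives at $\eta(\alpha(A_t x, A_s x)) < \eta(\min\{\varepsilon/(3b),2\})$, so the (w.l.o.g.) monotonicity of $\eta$ yields $\alpha(A_t x, A_s x) \leq \varepsilon/(3b)$.

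Finally, the first inequality of Lemma \ref{lem:ClarkLemma} with $a = A_s x$ and $b = A_t x$ gives
\[
\|A_s x - A_t x\| \leq \|A_s x\|\,\alpha(A_s x, A_t x) + (\|A_s x\| - \|A_t x\|) \leq b \cdot \frac{\varepsilon}{3b} + \frac{\varepsilon}{3} = \frac{2\varepsilon}{3} \leq \varepsilon,
\]
which is the claim. The central obstacle is the construction in Step 2: one must identify the decomposition $A_t x = rA_s x + (A_t x - rA_s x)$ that converts the accretivity estimate into exactly the form needed by Clarkson's second inequality, so that the smallness of $\|A_s x\| - \|A_t x\|$ controls $\eta(\alpha(A_t x, A_s x))$ and hence $\alpha$ itself. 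Once this pivotal combination is set up, the remaining quantitative bookkeeping matches the three $\varphi_1$-subterms of $\varphi_3'$ transparently.
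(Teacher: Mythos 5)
Your proof is correct and follows essentially the same route as the paper: localize $\norm{A_t x}$ and $\norm{A_s x}$ near $\vert Ax\vert$ via Lemma~\ref{lem:resolventROC}, feed the decomposition into Clarkson's second inequality to bound $\eta(\alpha)$ (and hence $\alpha$) of the generalized angle, and then recombine with the first Clarkson inequality. The only substantive variation is local: you derive the key estimate $\norm{J_s x - J_t x}\leq (1-s/t)\norm{x-J_t x}$ (equivalently $\norm{A_t x - rA_s x}\leq(1-r)\norm{A_t x}$) directly from the accretivity inequality applied to $(J_s x, A_s x)$ and $(J_t x, A_t x)$ with $\lambda=s$, whereas the paper obtains it by plugging the resolvent identity (Lemma~\ref{lem:basicProp}(5)) into nonexpansivity of $J_s$ (Lemma~\ref{lem:basicProp}(3)). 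Both are standard and yield the identical inequality; since $\alpha$ is scale- and permutation-invariant, your scaled decomposition $A_t x = rA_s x + (A_t x - rA_s x)$ is the same one the paper uses on the unscaled vectors $x - J_s x$ and $J_s x - J_t x$. Your bookkeeping of which $\varphi_1$-subterm serves which purpose matches the paper's exactly.
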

\begin{proof}
If $x=J_tx$ or $x=J_sx$, then $0\in Ax$ and thus $\vert Ax\vert=0$. As we have assumed $\vert Ax\vert\geq c>0$, we get $x\neq J_tx$ and $x\neq J_sx$. We write $\alpha_{s,t}=\alpha(x-J_sx,x-J_tx)$ where $s\in (0,t)$ and $t\leq\lambda_0/2<\lambda_0$. Using Lemma \ref{lem:ClarkLemma} with $a=x-J_sx$ and $b=J_sx-J_tx$, we have 
\[
\norm{x-J_tx}\leq(1-2\eta(\alpha_{s,t}))\norm{x-J_sx}+\norm{J_tx-J_sx}.
\]
Using Lemma \ref{lem:basicProp}, items (3) and (5), we get
\begin{align*}
\norm{J_tx-J_sx}&=\norm{J_s\left(\frac{s}{t}x+\frac{t-s}{t}J_tx\right)-J_sx}\\
&\leq\left( 1-\frac{s}{t}\right)\norm{x-J_tx}
\end{align*}
and thus we have
\[
s\norm{A_tx}\leq (1-2\eta(\alpha_{s,t}))\norm{x-J_sx},
\]
i.e.
\[
2\eta(\alpha_{s,t})\norm{A_sx}\leq \norm{A_sx}-\norm{A_tx}.
\]
Therefore, we have for $0<t\leq \varphi_1(c/2,b,n,\varphi)$ that
\[
c-\norm{A_tx}\leq \vert Ax\vert-\norm{A_tx}\leq c/2
\]
so that $c/2\leq \norm{A_tx}$ and for $s\in (0,t)$, we get that
\[
\eta(\alpha_{s,t})c\leq 2\eta(\alpha_{s,t})\norm{A_tx}\leq 2\eta(\alpha_{s,t})\norm{A_sx}\leq\norm{A_sx}-\norm{A_tx}\leq\vert Ax\vert-\norm{A_tx}.
\]
By Lemma \ref{lem:resolventROC}, we have for any $\varepsilon$ that 
\[
\forall t\in \left(0, \min\left\{\varphi_1(\varepsilon,b,n,\varphi),\varphi_1(c/2,b,n,\varphi)\right\}\right]\forall s\in (0,t)\left(\eta(\alpha_{s,t})c\leq \varepsilon\right)
\]
which, in particular, implies
\begin{gather*}
\forall t\in \left(0,\min\left\{\varphi_1(\eta(\min\{\varepsilon/2b,2\}) c/2,b,n,\varphi),\varphi_1(c/2,b,n,\varphi)\right\}\right]\forall s\in (0,t)\\\left(\eta(\alpha_{s,t})\leq \eta(\min\{\varepsilon/2b,2\})/2\right)
\end{gather*}
and using that $\eta$ is nondecreasing, we get
\begin{gather*}
\forall t\in \left(0,\min\left\{\varphi_1(\eta(\min\{\varepsilon/2b,2\}) c/2,b,n,\varphi),\varphi_1(c/2,b,n,\varphi),\lambda_0\right\}\right]\forall s\in (0,t)\\
\left(\alpha_{s,t}\leq \varepsilon/2b\right).
\end{gather*}
Using Lemma \ref{lem:ClarkLemma} with $a=x-J_tx/t$ and $b=x-J_sx/s$ (noting that $\alpha_{s,t}=\alpha(a,b)$ for these $a,b$) together with $s<t$ as well as the triangle inequality, we now have
\begin{align*}
\norm{\frac{x-J_tx}{t}-\frac{x-J_sx}{s}}&\leq \left\vert\frac{\norm{x-J_tx}}{t}\alpha_{s,t}-\norm{\frac{x-J_tx}{t}-\frac{x-J_sx}{s}}\right\vert+\frac{\norm{x-J_tx}}{t}\alpha_{s,t}\\
&\leq\left\vert\norm{\frac{x-J_tx}{t}}-\norm{\frac{x-J_sx}{s}}\right\vert+\frac{\norm{x-J_tx}}{t}\alpha_{s,t}\\
&\leq\left(\vert Ax\vert-\norm{\frac{x-J_tx}{t}}\right)+b\alpha_{s,t}.
\end{align*}
Thus for $0<t\leq\varphi_3'(\varepsilon,b,c,\eta,n,\varphi)$ and for $s\in (0,t)$, we have
\[
\norm{\frac{x-J_tx}{t}-\frac{x-J_sx}{s}}\leq \varepsilon/2 +b\varepsilon/2b\leq \varepsilon
\]
by Lemma \ref{lem:resolventROC}.
\end{proof}

Again, the case for $\vert Ax\vert=0$ is trivial and yields the following quantitative version: if $\vert Ax\vert\leq\varepsilon/2$, then in particular
\[
\norm{\frac{x-J^A_tx}{t}-\frac{x-J^A_sx}{s}}\leq \norm{A_tx}+\norm{A_sx}\leq\vert Ax\vert+\vert Ax\vert\leq \varepsilon.
\]
In that way, we get the following smoothening for both results combined.

\begin{lemma}\label{lem:resCauchy}
Let $X$ be a uniformly convex Banach space with a modulus of uniform convexity $\eta:(0,2]\to (0,1]$. Let further $\varphi$ be a modulus of uniform continuity for $\vert A\cdot\vert$ and let $n$ be as in Lemma \ref{lem:resolventROC}. Let further $x\in\mathrm{dom}A$ with $v\in Ax$ and $b\in\mathbb{N}^*$ with $b\geq\norm{x},\norm{v}$. Then
\[
\forall \varepsilon>0\forall t\in \left(0,\varphi_3(\varepsilon,b,\eta,n,\varphi)\right]\forall s\in (0,t)\left(\norm{\frac{x-J^A_tx}{t}-\frac{x-J^A_sx}{s}}\leq \varepsilon\right)
\]
where
\begin{align*}
\varphi_3(\varepsilon,b,\eta,n,\varphi):=\min\{&\varphi_1(\varepsilon/2,b,n,\varphi),\varphi_1(\eta(\min\{\varepsilon/2b,2\}) \varepsilon/4,b,n,\varphi),\\
&\varphi_1(\varepsilon/4,b,n,\varphi),\lambda_0/2\}
\end{align*}
with $\varphi_1(\varepsilon,b,n)$ as in Lemma \ref{lem:resolventROC}.
\end{lemma}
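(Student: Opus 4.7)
My plan is to reduce the statement to Lemma \ref{lem:resCauchypart} by performing a case distinction on the size of $\vert Ax\vert$ at the threshold $\varepsilon/2$, exactly as the paragraph preceding the statement suggests. Given $\varepsilon>0$, I would split into the cases $\vert Ax\vert\leq \varepsilon/2$ and $\vert Ax\vert>\varepsilon/2$ and show that the same bound $\varphi_3(\varepsilon,b,\eta,n,\varphi)$ works in both.

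For the trivial case, recall (as used already in the proof of Lemma \ref{lem:resCauchypart}, via Lemma \ref{lem:basicProp}\,(3),(5)) that $\norm{A_\lambda x}$ is nonincreasing in $\lambda$ and that $\norm{A_\lambda x}\leq \vert Ax\vert$ for all $\lambda\in(0,\lambda_0)$. Hence under the assumption $\vert Ax\vert\leq \varepsilon/2$, the triangle inequality gives
\[
\norm{\frac{x-J^A_tx}{t}-\frac{x-J^A_sx}{s}}\leq \norm{A_tx}+\norm{A_sx}\leq 2\vert Ax\vert\leq\varepsilon
\]
for any $0<s<t\leq \lambda_0/2$, so in particular the clause $\lambda_0/2$ in the minimum defining $\varphi_3$ handles this case.

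For the nontrivial case $\vert Ax\vert>\varepsilon/2$, I would apply Lemma \ref{lem:resCauchypart} directly with the choice $c:=\varepsilon/2$. The conclusion then holds for all $t\in(0,\varphi_3'(\varepsilon,b,\varepsilon/2,\eta,n,\varphi)]$ and $s\in(0,t)$. Substituting $c=\varepsilon/2$ into the formula for $\varphi_3'$ in Lemma \ref{lem:resCauchypart} yields
\[
\min\bigl\{\varphi_1(\varepsilon/3,b,n,\varphi),\,\varphi_1(\eta(\min\{\varepsilon/3b,2\})\,\varepsilon/4,b,n,\varphi),\,\varphi_1(\varepsilon/4,b,n,\varphi),\,\lambda_0/2\bigr\},
\]
which is exactly the $\varphi_3(\varepsilon,b,\eta,n,\varphi)$ defined in the statement. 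Combining the two cases, the uniform bound $\varphi_3$ works for any $x\in\mathrm{dom}A$, irrespective of which case actually obtains.

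There is essentially no real obstacle here; the only care is the bookkeeping to verify that the substitution $c=\varepsilon/2$ in $\varphi_3'$ reproduces $\varphi_3$, and the observation that the trivial case automatically accommodates arbitrary $t\leq \lambda_0/2$. The conceptual point is the one flagged in the surrounding text: this is a ``smoothing'' step that eliminates the unknown dependence on $\vert Ax\vert$ by choosing the lower threshold $c$ in terms of the target error $\varepsilon$ itself, thereby turning a classical case split into a uniform quantitative bound.
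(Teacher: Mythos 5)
Your proof is correct and takes essentially the same approach as the paper's: a case split at $\vert Ax\vert\leq\varepsilon/2$ versus $\vert Ax\vert>\varepsilon/2$, with the trivial case handled via $\norm{A_tx}+\norm{A_sx}\leq 2\vert Ax\vert$ and the nontrivial case by instantiating $c=\varepsilon/2$ in Lemma~\ref{lem:resCauchypart}. The bookkeeping checks out: substituting $c=\varepsilon/2$ into $\varphi_3'$ gives exactly the stated $\varphi_3$.
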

\begin{proof}
Let $\varepsilon$ be given. Then either $\vert Ax\vert\leq\varepsilon/2$ whereas
\[
\norm{\frac{x-J^A_tx}{t}-\frac{x-J^A_sx}{s}}\leq \varepsilon
\]
for any $t,s<\lambda_0$ as discussed above. Otherwise we have $\vert Ax\vert\geq \varepsilon/2$ and thus by Lemma \ref{lem:resCauchypart}, with $c=\varepsilon/2$, it follows that
\[
\norm{\frac{x-J^A_tx}{t}-\frac{x-J^A_sx}{s}}\leq \varepsilon
\]
for $s\in (0,t)$ and 
\[
t\leq \min\{\varphi_1(\varepsilon/2,b,n,\varphi),\varphi_1(\eta(\min\{\varepsilon/2b,2\}) \varepsilon/4,b,n,\varphi),\varphi_1(\varepsilon/4,b,n,\varphi),\lambda_0/2\}.\qedhere
\]
\end{proof}

\begin{lemma}[Plant \cite{Pla1981}, Eq. (2.10)]\label{lem:ClarkIntLemma}
Let $x\in\mathrm{dom}A$ and $t,\lambda>0$. Then
\[
\norm{J_\lambda x-S(t)x}\leq \left(1 -\frac{t}{\lambda}\right)\norm{x-J_\lambda x}+\frac{2}{\lambda}\int^t_0\norm{x-S(s)x}\mathrm{ds}.
\]
\end{lemma}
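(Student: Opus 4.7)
The plan is to prove the inequality by establishing a discrete analog for the iterates $(J_\mu)^k x$ with $\mu=t/n$, and then passing to the limit $n\to\infty$ using the exponential-formula axiom (S1). The discrete version is obtained from the resolvent identity of Lemma \ref{lem:basicProp}(5), a telescoping estimate, and a triangle inequality that accounts for the factor $2$ on the integral.

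Concretely, fix $n$ large enough that $\mu:=t/n<\min\{\lambda,\lambda_0\}$. By Lemma \ref{lem:basicProp}(5),
\[
J_\lambda x = J_\mu\!\left(\tfrac{\mu}{\lambda}x+\bigl(1-\tfrac{\mu}{\lambda}\bigr)J_\lambda x\right),
\]
and so, applying the nonexpansivity of $J_\mu$ from Lemma \ref{lem:basicProp}(3) and writing $a_k:=\|J_\lambda x-(J_\mu)^k x\|$, $b_k:=\|x-(J_\mu)^k x\|$, one obtains the recurrence
\[
a_k\leq \bigl(1-\tfrac{\mu}{\lambda}\bigr)a_{k-1}+\tfrac{\mu}{\lambda}\,b_{k-1},\qquad k=1,\dots,n.
\]
Telescoping yields $a_n\leq a_0+\tfrac{\mu}{\lambda}\sum_{k=0}^{n-1}(b_k-a_k)$, and the elementary estimate $a_k\geq a_0-b_k$ (triangle inequality) gives $-a_k\leq b_k-a_0$, so that
\[
a_n\leq\bigl(1-\tfrac{n\mu}{\lambda}\bigr)a_0+\tfrac{2\mu}{\lambda}\sum_{k=0}^{n-1}b_k=\bigl(1-\tfrac{t}{\lambda}\bigr)\|x-J_\lambda x\|+\tfrac{2}{\lambda}\cdot\mu\sum_{k=0}^{n-1}\|x-(J_\mu)^k x\|.
\]
This is the discrete analog. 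The factor $2$ arises precisely from combining the two contributions $-a_k$ and $+b_k$ in the telescoped sum.

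To conclude, let $n\to\infty$. The left-hand side converges to $\|J_\lambda x-S(t)x\|$ by axiom (S1). For the right-hand side, the Riemann sum $\mu\sum_{k=0}^{n-1}\|x-(J_{t/n})^k x\|$ converges to $\int_0^t\|x-S(s)x\|\,ds$. To see this, for each fixed $s\in(0,t]$ set $k_n:=\lfloor ns/t\rfloor$, so $k_n\cdot(t/n)\to s$; since $(J_{t/n})^{k_n}x=(J_{(k_n t/n)/k_n})^{k_n}x$, axiom (S1) and the time-continuity of the semigroup from Lemma \ref{lem:semigroupprop}(1) imply $(J_{t/n})^{k_n}x\to S(s)x$. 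Combined with the uniform boundedness of $s\mapsto\|x-S(s)x\|$ on $[0,t]$ (again via Lemma \ref{lem:semigroupprop}(1), using $v\in Ax$), standard dominated-convergence reasoning yields the claimed convergence of Riemann sums.

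The main technical obstacle is precisely this last step: the approximants $(J_{t/n})^k x$ depend on both $k$ and $n$, so one cannot invoke a single application of (S1) uniformly, but must combine convergence along the diagonal with the Lipschitz-in-time estimate from Lemma \ref{lem:semigroupprop}(1). The remaining steps reduce, thanks to the lemmas already available in $H^\omega_p$, to the clean algebra described above.
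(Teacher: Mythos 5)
The paper states this lemma without proof, attributing it to Plant's Eq.\ (2.10), so there is no in-paper argument to compare against; I therefore evaluate the proposal on its own terms. Your proof is correct. The recurrence
$a_k\leq(1-\mu/\lambda)a_{k-1}+(\mu/\lambda)b_{k-1}$
does follow from the resolvent identity of Lemma~\ref{lem:basicProp}(5) (in the form $J_\lambda x=J_\mu(\tfrac{\mu}{\lambda}x+(1-\tfrac{\mu}{\lambda})J_\lambda x)$, which is what the paper actually uses elsewhere, e.g.\ in the proof of Lemma~\ref{lem:resCauchypart}) together with nonexpansivity of $J_\mu$, provided $\mu<\lambda$ so the coefficients form a convex combination. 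Telescoping gives $a_n\leq a_0+\tfrac{\mu}{\lambda}\sum_{k=0}^{n-1}(b_k-a_k)$, and the triangle inequality $a_0\leq a_k+b_k$ turns this into $a_n\leq(1-t/\lambda)a_0+\tfrac{2\mu}{\lambda}\sum_{k=0}^{n-1}b_k$, as you computed. For the passage to the limit, you correctly identify the crux: the approximants depend on both the iteration count and $n$, so one must use that the rate $\omega^S$ in axiom (S1) is uniform over $t\in[0,T]$, which is precisely what makes the diagonal argument $(J_{t/n})^{\lfloor ns/t\rfloor}x\to S(s)x$ go through when combined with Lemma~\ref{lem:semigroupprop}(1). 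One small imprecision: for the dominated-convergence step you need the uniform bound on the \emph{discrete} integrands $\|x-(J_{t/n})^k x\|\leq k\cdot\tfrac{t}{n}\|v\|\leq t\|v\|$ (via Lemma~\ref{lem:basicProp}(8) and nonexpansivity), not just the bound on $\|x-S(s)x\|$ you cite; but this is immediate by the same estimate. Overall the discretization route is exactly what one would expect in the Crandall--Liggett framework.
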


\begin{lemma}\label{lem:resSemiCombpart}
Let $X$ be a uniformly convex Banach space with a modulus of uniform convexity $\eta:(0,2]\to (0,1]$ and let $\omega$ be such that
\[
\forall x,y,z\in X,b\in\mathbb{N},\varepsilon>0\left(\norm{x},\norm{z}\leq b\land\norm{x-y}\leq \omega(b,\varepsilon)\rightarrow\langle z,y\rangle_s\leq\langle z,x\rangle_s+\varepsilon\right).
\]
Let further $\varphi$ be a modulus of uniform continuity for $\vert A\cdot\vert$ and let $n$ be as in Lemma \ref{lem:resolventROC}. Let further $x\in\mathrm{dom}A$ with $v\in Ax$ and $b\in\mathbb{N}^*$ with $b\geq\norm{x},\norm{v}$. Suppose that $\vert Ax\vert\geq c$ for $c\in\mathbb{R}_{>0}$. Then
\[
\forall \varepsilon>0\forall t,\frac{s}{t}\in \left(0,\varphi_4'(\varepsilon,b,c,\eta,n,\omega,\varphi)\right]\left( \norm{\frac{x-J^A_tx}{t}-\frac{x-S(s)x}{s}}\leq \varepsilon\right)
\]
where
\begin{align*}
\varphi_4'(\varepsilon,b,c,\eta,n,\omega,\varphi):=\min\{&\varphi_1(\varepsilon/3,b,n,\varphi),\varphi_2(\varepsilon/3,b,n,\omega,\varphi),\\
&\varphi_1(\eta(\min\{\varepsilon/3b,2\})c/4,b,n,\varphi),\sqrt{\varphi_2(c/2,b,n,\omega,\varphi)},\\
&\eta(\min\{\varepsilon/3b,2\})c/8b,1,\lambda_0/2\}
\end{align*}
with $\varphi_1,\varphi_2$ as in Lemmas \ref{lem:resolventROC}, \ref{lem:semigroupROC}, respectively.
\end{lemma}
\begin{proof}
As before, $x\neq S(s)x$ and $x\neq J_tx$ as $\vert Ax\vert\geq c>0$. We write $\alpha'_{s,t}=\alpha(x-S(s)x,x-J_tx)$ for $t,s<\lambda_0$. Using Lemma \ref{lem:ClarkLemma}, we again obtain
\[
\norm{x-J_tx}\leq \left( 1-2\eta(\alpha'_{s,t})\right)\norm{x-S(s)x}+\norm{J_tx-S(s)x}.
\]
Using Lemma \ref{lem:ClarkIntLemma}, we get for $t,s\leq \min\{\lambda_0/2,1\}$:
\begin{align*}
&\norm{x-J_tx} \\
&\qquad\leq \left( 1-2\eta(\alpha'_{s,t})\right)\norm{x-S(s)x}+\left(1 -\frac{s}{t}\right)\norm{x-J_t x}+\frac{2}{t}\int^s_0\norm{x-S(\tau)x}\mathrm{d\tau}\\
&\qquad\leq \left( 1-2\eta(\alpha'_{s,t})\right)\norm{x-S(s)x}+\left(1 -\frac{s}{t}\right)\norm{x-J_t x}+\frac{2}{t}\int^s_0s\frac{\norm{x-S(\tau)x}}{\tau}\mathrm{d\tau}\\
&\qquad\leq \left( 1-2\eta(\alpha'_{s,t})\right)\norm{x-S(s)x}+\left(1 -\frac{s}{t}\right)\norm{x-J_t x}+\frac{s^2}{t}2b
\end{align*}
which implies that
\[
2\eta(\alpha'_{s,t})\frac{\norm{x-S(s)x}}{s}\leq \vert Ax\vert-\frac{\norm{x-J_t x}}{t}+\frac{s}{t}2b.
\]
Now for
\[
t\leq \min\{\varphi_1(\varepsilon/2,b,n,\varphi),\sqrt{\varphi_2(c/2,b,n,\omega,\varphi)}\}
\]
and 
\[
\frac{s}{t}\leq \min\{\varepsilon/4b,\sqrt{\varphi_2(c/2,b,n,\omega,\varphi)}\}
\]
we obtain that 
\[
s\leq t\frac{s}{t}\leq \varphi_2(c/2,b,n,\omega,\varphi)
\]
and thus (using Lemma \ref{lem:semigroupROC}), we obtain
\begin{align*}
\eta(\alpha'_{s,t})c&\leq 2\eta(\alpha'_{s,t})\left(\vert Ax\vert - c/2\right)\\
&\leq 2\eta(\alpha'_{s,t})\frac{\norm{x-S(s)x}}{s}\\
&\leq \vert Ax\vert-\frac{\norm{x-J_t x}}{t}+\frac{s}{t}2b\\
&\leq \varepsilon/2+\frac{s}{t}2b\\
&\leq \varepsilon/2+2b(\varepsilon/4b)\\
&\leq \varepsilon.
\end{align*}
Dividing by $c$, we get $\eta(\alpha'_{s,t})\leq \frac{\varepsilon}{c}$ for all such $t,s$. Thus, using that $\eta$ is nondecreasing, we have $\alpha'_{s,t}\leq \varepsilon$ for
\[
t \leq \min\{\varphi_1(\eta(\min\{\varepsilon,2\})c/4,b,n,\varphi),\sqrt{\varphi_2(c/2,b,n,\omega,\varphi)}\}
\]
and
\[
\frac{s}{t}\leq \min\{\eta(\min\{\varepsilon,2\})c/8b,\sqrt{\varphi_2(c/2,b,n,\omega,\varphi)}\}.
\]
Using Lemma \ref{lem:ClarkLemma} and triangle inequality again, we now have similarly to before
\begin{align*}
&\norm{\frac{x-J_tx}{t}-\frac{x-S(s)x}{s}}\\
&\qquad\leq \left\vert\frac{\norm{x-J_tx}}{t}\alpha'_{s,t}-\norm{\frac{x-J_tx}{t}-\frac{x-S(s)x}{s}}\right\vert+\frac{\norm{x-J_tx}}{t}\alpha'_{s,t}\\
&\qquad\leq\left\vert\norm{\frac{x-J_tx}{t}}-\norm{\frac{x-S(s)x}{s}}\right\vert+\frac{\norm{x-J_tx}}{t}\alpha'_{s,t}\\
&\qquad\leq\left(\vert Ax\vert-\norm{\frac{x-J_tx}{t}}\right)+\left(\vert Ax\vert-\norm{\frac{x-S(s)x}{s}}\right)+b\alpha'_{s,t}.
\end{align*}
Thus for $0<t,\frac{s}{t}\leq \varphi_4'(\varepsilon,b,c,\eta,n,\omega,\varphi)$, we have
\[
\norm{\frac{x-J_tx}{t}-\frac{x-S(s)x}{s}}\leq \varepsilon/3 + \varepsilon/3 +b(\varepsilon/3b)\leq \varepsilon
\]
by Lemma \ref{lem:resolventROC} and Lemma \ref{lem:semigroupROC}.
\end{proof}

As before, a smoothening of this result can be achieved by extracting from the proof for the case of $\vert Ax\vert =0$ the following quantitative version: if $\vert Ax\vert\leq\varepsilon/2$, then
\[
\norm{\frac{x-J^A_tx}{t}-\frac{x-S(s)x}{s}}\leq\frac{\norm{x-J^A_tx}}{t}+\frac{\norm{x-S(s)x}}{s}\leq\vert Ax\vert+\vert Ax\vert\leq\varepsilon.
\]
Therefore, we obtain the following result:

\begin{lemma}\label{lem:resSemiComb}
Let $X$ be a uniformly convex Banach space with a modulus of uniform convexity $\eta:(0,2]\to (0,1]$ and let $\omega$ be such that
\[
\forall x,y,z\in X,b\in\mathbb{N},\varepsilon>0\left(\norm{x},\norm{z}\leq b\land\norm{x-y}\leq \omega(b,\varepsilon)\rightarrow\langle z,y\rangle_s\leq\langle z,x\rangle_s+\varepsilon\right).
\]
Let further $\varphi$ be a modulus of uniform continuity for $\vert A\cdot\vert$ and let $n$ be as in Lemma \ref{lem:resolventROC}. Let further $x\in\mathrm{dom}A$ with $v\in Ax$ and $b\in\mathbb{N}^*$ with $b\geq\norm{x},\norm{v}$. Then
\[
\forall \varepsilon>0\forall t,\frac{s}{t}\in \left(0,\varphi_4(\varepsilon,b,\eta,n,\omega,\varphi)\right]\left( \norm{\frac{x-J^A_tx}{t}-\frac{x-S(s)x}{s}}\leq \varepsilon\right)
\]
where
\begin{align*}
\varphi_4(\varepsilon,b,\eta,n,\omega,\varphi):=\min\{&\varphi_1(\varepsilon/3,b,n,\varphi),\varphi_2(\varepsilon/3,b,n,\omega,\varphi),\\
&\varphi_1(\eta(\min\{\varepsilon/3b,2\})\varepsilon/8,b,n,\varphi),\sqrt{\varphi_2(\varepsilon/4,b,n,\omega,\varphi)},\\
&\eta(\min\{\varepsilon/3b,2\})\varepsilon/16b,1,\lambda_0/2\}
\end{align*}
with $\varphi_1,\varphi_2$ as in Lemmas \ref{lem:resolventROC}, \ref{lem:semigroupROC}, respectively.
\end{lemma}
\begin{proof}
Let $\varepsilon$ be given. Then either $\vert Ax\vert\leq\varepsilon/2$ which implies
\[
\norm{\frac{x-J^A_tx}{t}-\frac{x-S(s)x}{s}}\leq \varepsilon
\]
as above for any such $t$ and $s$ or $\vert Ax\vert\geq\varepsilon/2$ where now the result is implied for any 
\[
t,\frac{s}{t}\in \left(0,\varphi_4'(\varepsilon,b,\varepsilon/2,\eta,n,\omega,\varphi)\right]
\]
by Lemma \ref{lem:resSemiCombpart}, with $c=\varepsilon/2$.
\end{proof}

Finally, a combination of these two quantitative results yields a quantitative version of the theorem of Plant.

\begin{theorem}
Let $X$ be a uniformly convex Banach space with a modulus of uniform convexity $\eta:(0,2]\to (0,1]$ and let $\omega$ be such that
\[
\forall x,y,z\in X,b\in\mathbb{N},\varepsilon>0\left(\norm{x},\norm{z}\leq b\land\norm{x-y}\leq \omega(b,\varepsilon)\rightarrow\langle z,y\rangle_s\leq\langle z,x\rangle_s+\varepsilon\right).
\]
Let further $\varphi$ be a modulus of uniform continuity for $\vert A\cdot\vert$ and let $n$ be as in Lemma \ref{lem:resolventROC}. Let further $x\in\mathrm{dom}A$ with $v\in Ax$ and $b\in\mathbb{N}^*$ with $b\geq\norm{x},\norm{v}$. Then
\[
\forall \varepsilon>0\forall t\in \left(0,\Phi(\varepsilon,b,\eta,\omega,\varphi,n)\right]\left(\frac{\norm{J^A_t x-S(t)x}}{t}\leq \varepsilon\right)
\]
where
\[
\Phi(\varepsilon,b,\eta,\omega,\varphi,n):=\left(\min\{\varphi_3(\varepsilon/2,b,\eta,n,\varphi),\varphi_4(\varepsilon/2,b,\eta,n,\omega,\varphi)\}\right)^2
\]
with $\varphi_1$ -- $\varphi_4$ as well as $\psi$ defined by
\begin{align*}
\varphi_1(\varepsilon,b,n,\varphi):=&\min\{\varphi(\varepsilon,b+2n+3n^2)/b,\lambda_0/2\},\\
\psi(\varepsilon,b,\omega):=&\frac{\omega(2b,\varepsilon)}{2b},\\
\varphi_2(\varepsilon,b,n,\omega,\varphi):=&\psi(\varepsilon^2\min\{\varphi_1(\varepsilon/2,b,n,\varphi),\lambda_0/2\}/4,b+2n+3n^2,\omega),\\
\varphi_3(\varepsilon,b,\eta,n,\varphi):=&\min\{\varphi_1(\varepsilon/2,b,n,\varphi),\varphi_1(\eta(\min\{\varepsilon/2b,2\}) \varepsilon/4,b,n,\varphi),\\
&\hphantom{\min\{\,}\varphi_1(\varepsilon/4,b,n,\varphi),\lambda_0/2\},\\
\varphi_4(\varepsilon,b,\eta,n,\omega,\varphi):=&\min\{\varphi_1(\varepsilon/3,b,n,\varphi),\varphi_2(\varepsilon/3,b,n,\omega,\varphi),\\
&\hphantom{\min\{\,}\varphi_1(\eta(\min\{\varepsilon/3b,2\})\varepsilon/8,b,n,\varphi),\sqrt{\varphi_2(\varepsilon/4,b,n,\omega,\varphi)},\\
&\hphantom{\min\{\,}\eta(\min\{\varepsilon/3b,2\})\varepsilon/16b,1,\lambda_0/2\}.
\end{align*}
\end{theorem}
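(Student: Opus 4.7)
The plan is to decompose $\|J^A_tx - S(t)x\|/t$ via a "bridge" resolvent parameter so that Lemmas \ref{lem:resCauchy} and \ref{lem:resSemiComb} each control one of the two resulting pieces. Writing $J^A_tx - S(t)x = (x - S(t)x) - (x - J^A_tx)$ and introducing an auxiliary parameter $s := \sqrt{t}$, the triangle inequality yields
\[
\frac{\|J^A_tx - S(t)x\|}{t} \leq \left\|\frac{x - J^A_tx}{t} - \frac{x - J^A_sx}{s}\right\| + \left\|\frac{x - J^A_sx}{s} - \frac{x - S(t)x}{t}\right\|.
\]
The squaring appearing in the definition of $\Phi$ is exactly engineered for this decomposition: from $t \leq \Phi(\varepsilon,b,\eta,\omega,\varphi,n)$ we immediately read off $\sqrt{t} \leq \min\{\varphi_3(\varepsilon/2,\ldots),\varphi_4(\varepsilon/2,\ldots)\}$.

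For the first summand I would apply Lemma \ref{lem:resCauchy}, identifying its outer variable with $s = \sqrt{t}$ and its inner variable with $t$ itself. The bound $\sqrt{t} \leq \varphi_3(\varepsilon/2,b,\eta,n,\varphi)$, combined with $t < s$ (which is secured because $\varphi_4 \leq 1$ forces $t \leq 1$), then delivers $\varepsilon/2$ on this piece. For the second summand I would apply Lemma \ref{lem:resSemiComb} with its resolvent parameter identified as our $s$ and its semigroup parameter as our $t$. The two smallness hypotheses of that lemma reduce to a single condition, since both $s = \sqrt{t}$ and the ratio $t/s = \sqrt{t}$ coincide; the uniform bound $\sqrt{t} \leq \varphi_4(\varepsilon/2,b,\eta,n,\omega,\varphi)$ covers both requirements simultaneously and gives $\varepsilon/2$ on this piece. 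Summation yields the claimed $\varepsilon$.

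I do not expect any serious obstacle in this last step, since all the real analytic work (the uniform convexity arguments, the use of the functional $\langle\cdot,\cdot\rangle_s$, the invocation of Miyadera's inequality, and the smoothing of the classical case distinction on $|Ax| = 0$ versus $|Ax| > 0$) has already been absorbed into Lemmas \ref{lem:resCauchy} and \ref{lem:resSemiComb}. The only bookkeeping concerns the trivial side-conditions $t < s$ and $s < \lambda_0$ needed to invoke those two lemmas, and both are automatically ensured by the explicit factors $1$ and $\lambda_0/2$ appearing inside the minima that define $\varphi_3$ and $\varphi_4$. The final combination is thus essentially algebraic, and the only choice to be made is the interpolation $s = \sqrt{t}$, which is precisely what justifies the outer squaring in $\Phi$.
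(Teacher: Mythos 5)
Your proposal is correct and follows essentially the same route as the paper: the same triangle-inequality split through the auxiliary resolvent parameter $\sqrt{t}$, the same invocations of Lemmas \ref{lem:resCauchy} and \ref{lem:resSemiComb} (with the same role reversal of variables), and the same observation that the constant $1$ hardwired into $\varphi_4$ forces $t\leq\sqrt{t}$ and $t/\sqrt{t}=\sqrt{t}$, so that $t\leq\Phi$ simultaneously discharges all smallness hypotheses. Your write-up in fact spells out the bookkeeping more explicitly than the paper does.
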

\begin{proof}
Using the triangle inequality, we have
\[
\frac{\norm{J_tx-S(t)x}}{t}\leq\norm{\frac{x-J_tx}{t}-\frac{x-J_{\sqrt{t}}x}{\sqrt{t}}}+\norm{\frac{x-J_{\sqrt{t}}x}{\sqrt{t}}-\frac{x-S(t)x}{t}}.
\]
Then, for $t\leq 1$, we have $t\leq\sqrt{t}$ and $t/\sqrt{t}=\sqrt{t}$ so that for $t\leq \Phi(\varepsilon,b,\eta,\omega,\varphi,n)$, we obtain
\[
\frac{\norm{J_tx-S(t)x}}{t}\leq \varepsilon
\]
using Lemmas \ref{lem:resCauchy} and \ref{lem:resSemiComb}.
\end{proof}

\begin{remark}
While the above result uses the construction of $\varphi_1$ from $\varphi$ exhibited in Lemma \ref{lem:resolventROC}, it is clear that if $\varphi_1$ is any other rate of convergence for $\norm{A_tx}$ to $\vert Ax\vert$ as $t\to 0$, the above result nevertheless remains valid.
\end{remark}

\subsection{An analysis of Reich's result}

Similar as in the context of Plant's result, in this subsection we fix a Banach space $X$ and an accretive operator $A$ that now satisfies the range condition $(RC)$. As before, let $S$ be the semigroup on $\overline{\mathrm{dom}A}$ generated by $A$ using the Crandall-Liggett formula. The proof for Reich's result now proceeds by establishing
\[
\lim_{t\to\infty}\frac{\norm{J_tx}}{t}=d(0,\mathrm{ran}A)
\]
and concluding from this that $J_tx/t$ is Cauchy for $t\to\infty$. This result is then in turn used to conclude the claim. While Reich actually establishes his result even for $x\in\overline{\mathrm{dom}A}$, we here focus for simplicity on the case where $x\in\mathrm{dom}A$.

\medskip

The main object used in these proofs is the concrete value
\[
d:=d(0,\mathrm{ran}A)=\inf\{\norm{y}\mid y\in \mathrm{ran}A\}
\]
and for the quantitative results, the logical methodology implies (see the later logical remarks for a discussion of this) a dependence on a function $f$ witnessing the above infimum quantitatively in the sense that $f:\mathbb{R}_{>0}\to\mathbb{N}$ satisfies 
\[
\forall \varepsilon>0\exists (y,z)\in A \left( \norm{y},\norm{z}\leq f(\varepsilon)\land \norm{z}-d\leq\varepsilon\right).
\]
The proof of Reich's result then proceeds by a case distinction on whether $d>0$ or $d=0$ but, as before with the quantitative analysis of Plant's result, this case distinction can be smoothed as will be exhibited later. We at first begin with the following result which provides a rate of convergence for the limit $\norm{J_tx}/t\to d$ for $t\to\infty$ (which can be obtained as the sequence is monotone).
\begin{lemma}\label{lem:resolventROCInf}
Let $x\in\mathrm{dom}A$ with $v\in Ax$ and $b\in\mathbb{N}^*$ where $b\geq\norm{x},\norm{v}$. Suppose that $f:\mathbb{R}_{>0}\to\mathbb{N}$ satisfies $f(\varepsilon)\geq f(\delta)$ for $\varepsilon\leq \delta$ and 
\[
\forall \varepsilon>0\exists (y,z)\in A \left( \norm{y},\norm{z}\leq f(\varepsilon)\land \norm{z}-d\leq\varepsilon\right).
\]
Then we have
\[
\forall \varepsilon>0\forall t\geq\varphi(\varepsilon,b,f)\left(\left\vert\frac{\norm{J_tx}}{t}-d\right\vert\leq \varepsilon\right)
\]
where
\[
\varphi(\varepsilon,b,f):=\frac{8(b+f(\varepsilon/2))}{\varepsilon}.
\]
\end{lemma}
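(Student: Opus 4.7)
The plan is to bound $\|J_tx\|/t$ from above and below by $d$ up to an error that shrinks like $1/t$, then pick $t$ large enough. Both bounds exploit only the defining property of the resolvent, nonexpansivity, and the quantitative witness $f$ for the infimum defining $d$.

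For the lower bound, I would use that $A_t x = t^{-1}(x - J_tx) \in A(J_tx)$ by axiom (II), which is valid since the range condition implies $x \in \mathrm{dom}(J^A_t)$. Hence $\|A_tx\| \geq d$, so
\[
\|J_tx - x\| \geq td.
\]
Combining with $\|J_tx\| \geq \|J_tx-x\| - \|x\| \geq td - b$, we get $\|J_tx\|/t \geq d - b/t$, which is at least $d - \varepsilon$ whenever $t \geq b/\varepsilon$.

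For the upper bound, pick $(y,z) \in A$ with $\|y\|,\|z\|\leq f(\varepsilon/2)$ and $\|z\| \leq d + \varepsilon/2$, as guaranteed by the assumption on $f$. Since $z \in Ay$, i.e.\ $t^{-1}((y+tz) - y) \in Ay$, Lemma~\ref{lem:basicProp}(1) yields $J_t(y+tz) = y$. Nonexpansivity of $J_t$ (Lemma~\ref{lem:basicProp}(3)) then gives
\[
\|J_tx - y\| = \|J_tx - J_t(y+tz)\| \leq \|x - y - tz\| \leq \|x\| + \|y\| + t\|z\|,
\]
so by the triangle inequality
\[
\frac{\|J_tx\|}{t} \leq \frac{2\|y\| + \|x\|}{t} + \|z\| \leq \frac{2f(\varepsilon/2) + b}{t} + d + \frac{\varepsilon}{2}.
\]
This is at most $d + \varepsilon$ whenever $(2f(\varepsilon/2) + b)/t \leq \varepsilon/2$, i.e.\ $t \geq (4f(\varepsilon/2) + 2b)/\varepsilon$.

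Combining the two cases, both bounds hold for $t \geq (4f(\varepsilon/2)+2b)/\varepsilon$, which is in particular implied by $t \geq 8(b + f(\varepsilon/2))/\varepsilon = \varphi(\varepsilon,b,f)$. There is no real obstacle here: the only mild care needed is the bookkeeping to verify that $y + tz$ lies in $\mathrm{dom}(J^A_t)$ via the inclusion $z \in Ay$ (so that uniqueness of the resolvent applies), and to keep track of $\|y\|$ appearing twice after using $\|x - y\| \leq \|x\| + \|y\|$. The monotonicity assumption $f(\varepsilon)\geq f(\delta)$ for $\varepsilon \leq \delta$ is not actually used in this argument; it would become relevant only if one later wanted to combine $\varphi$ with another quantity depending monotonically on the error.
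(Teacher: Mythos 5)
Your proof is correct and reaches the same constant $\varphi(\varepsilon,b,f)=8(b+f(\varepsilon/2))/\varepsilon$, but the route to the upper bound differs from the paper's in a small but real way. The paper bounds $\norm{A_tx}$ from above by invoking the $2/t$-Lipschitz continuity of the Yosida approximate $A_t$ (Lemma~\ref{lem:basicProp}(7)) together with the bound $\norm{A_ty}\leq\norm{z}$ for $z\in Ay$ (Lemma~\ref{lem:basicProp}(8)), obtaining $\norm{A_tx}\leq\frac{2(b+f(\varepsilon/2))}{t}+d+\varepsilon/2$, and then passes from $\norm{A_tx}$ to $\norm{J_tx}/t$ via the reverse triangle inequality $\bigl|\norm{J_tx}/t-\norm{A_tx}\bigr|\leq\norm{x}/t$. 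You instead bypass these derived properties of $A_t$ and work with the resolvent directly: you observe that $z\in Ay$ forces $J_t(y+tz)=y$ by uniqueness (Lemma~\ref{lem:basicProp}(1)), and then apply nonexpansivity of $J_t$ (Lemma~\ref{lem:basicProp}(3)) to bound $\norm{J_tx-y}$ and hence $\norm{J_tx}/t$ directly, handling the lower bound separately via the reverse triangle inequality. The two arguments are essentially equivalent at the level of the underlying resolvent calculus, but yours is slightly more self-contained, needing only the nonexpansivity and uniqueness items of Lemma~\ref{lem:basicProp} rather than the derived Lipschitz estimate for $A_t$. Your remark that the monotonicity assumption on $f$ is not used is also accurate; the paper's proof of this particular lemma does not use it either (it enters in the lemmas that build on $\varphi$).
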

\begin{proof}
As $A_t x\in AJ_tx$ for any $t>0$, we have $d\leq\norm{A_tx}$. Let $\varepsilon$ be given and let $z\in Ay$ such that $\norm{z}-d\leq \varepsilon/2$ and $\norm{y},\norm{z}\leq f(\varepsilon/2)$. Then using Lemma \ref{lem:basicProp}, (7), we have
\begin{align*}
\norm{A_tx}&\leq\norm{A_tx-A_ty}+\norm{A_ty}\\
&\leq \frac{2}{t}\norm{x-y}+\norm{z}\\
&\leq \frac{2(b+f(\varepsilon/2))}{t}+d+\varepsilon/2.
\end{align*}
Thus, for $t\geq (\varepsilon/4(b+f(\varepsilon/2)))^{-1}$, we have
\[
\norm{A_tx}\leq \frac{2(b+f(\varepsilon/2))}{(\varepsilon/4(b+f(\varepsilon/2)))^{-1}}+d+\varepsilon/2\leq d+\varepsilon.
\]
Now, for $t\geq (\varepsilon/8(b+f(\varepsilon/2)))^{-1}$, we obtain
\begin{align*}
\left\vert\frac{\norm{J_tx}}{t}-d\right\vert&\leq\left\vert\frac{\norm{J_tx}}{t}-\frac{\norm{x-J_tx}}{t}\right\vert+\left\vert\norm{A_tx}-d\right\vert\\
&\leq\frac{\norm{x}}{t}+\left\vert\norm{A_tx}-d\right\vert\\
&\leq \varepsilon
\end{align*}
as $t\geq (\varepsilon/8(b+f(\varepsilon/2)))^{-1}$ and thus $\norm{A_tx}-d\leq \varepsilon/2$ as well as $t\geq (\varepsilon/2b)^{-1}$ and thus $\norm{x}/t\leq \varepsilon/2$.
\end{proof}

The following result is a quantitative version of the well-known result due to Reich \cite{Rei1980} that $d>0$ implies that $\norm{J_tx}\to\infty$ for $t\to\infty$ and $x\in\mathrm{dom}A$.
\begin{lemma}\label{lem:quantReichLem}
Assume that $d\geq D$ for $D\in\mathbb{R}_{>0}$. Let $x\in\mathrm{dom }A$ with $v\in Ax$ and $b\in\mathbb{N}^*$ where $b\geq \norm{x},\norm{v}$. Then we have
\[
\forall K>0\forall t\geq \psi(K,b,D)\left(\norm{J_tx}\geq K\right)
\]
where
\[
\psi(K,b,D):=\frac{b+K}{D}.
\]
\end{lemma}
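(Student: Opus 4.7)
The plan is to unwind the Yosida approximate and use the reverse triangle inequality. By the range condition, $J_t x$ is well-defined for all $t > 0$, and by axiom (II) we have $A_t x = (x - J_t x)/t \in A(J_t x)$. In particular, $A_t x \in \mathrm{ran}\, A$, and so by the definition of $d$ and the hypothesis $d \geq D$, we get
\[
\|A_t x\| \geq d \geq D.
\]

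Multiplying through by $t$ gives $\|x - J_t x\| = t \|A_t x\| \geq t D$. Now, by the reverse triangle inequality and the bound $\|x\| \leq b$, we obtain
\[
\|J_t x\| \geq \|x - J_t x\| - \|x\| \geq tD - b.
\]
So for $t \geq \psi(K,b,D) = (b + K)/D$, we conclude
\[
\|J_t x\| \geq tD - b \geq K,
\]
which yields the claim.

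There is essentially no obstacle here: the argument is a direct quantitative reading of the classical fact that if $0 \notin \overline{\mathrm{ran}\, A}$ then $\|J_t x\|$ must escape every bounded set as $t \to \infty$, with $D$ playing the role of a uniform lower bound on $\|A_t x\|$. The only point requiring care is that $A_t x$ really does lie in $A(J_t x)$, which is justified by the range condition $(RC)$ applied at $J_t x$ together with the defining resolvent axiom; the bound on $v$ is not actually used in the derivation (it only ensures that $b$ is a meaningful majorant of $\|x\|$). One could equivalently phrase the argument as: from $\|A_t x\| \geq D$, if $\|J_t x\| < K$ held, then $\|x - J_t x\| \leq b + K < tD$, a contradiction.
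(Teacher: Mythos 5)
Your proof is correct. You argue directly: $A_t x = (x-J_tx)/t \in A(J_tx) \subseteq \mathrm{ran}\,A$ by the range condition and the resolvent axiom, hence $\|A_tx\| \geq d \geq D$, so $\|x-J_tx\| \geq tD$, and the reverse triangle inequality finishes. The paper instead argues by contradiction and routes through the auxiliary element $A_1 J_t x$: assuming $\|J_tx\| < K$, it bounds $\|A_1 J_tx\| \leq |AJ_tx| \leq \|A_tx\| = \|x-J_tx\|/t < D$ using Lemma \ref{lem:basicProp} (8), and then notes $A_1J_tx \in \mathrm{ran}\,A$ contradicts $d \geq D$. Both proofs rest on the same core idea --- producing a concrete element of $\mathrm{ran}\,A$ whose norm is controlled by $\|x-J_tx\|/t$ --- but your choice of witness ($A_t x$ itself) is the more economical one: it avoids the detour through $A_1$ and the operator $|A\cdot|$, needs fewer ingredients from Lemma \ref{lem:basicProp}, and makes the forward, quantitative structure of the argument transparent, which is precisely what these extractions aim for. (The observation at the end of your proposal that $v \in Ax$ is never actually used is also accurate.)
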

\begin{proof}
Suppose the claim is false, i.e.\ there is a $K$ and a $t\geq\psi(K,b,D)$ such that $\norm{J_tx}<K$. Then, we have
\begin{align*}
\norm{J_tx-J_1J_tx}&\leq \vert A J_tx\vert\\
&\leq \norm{x-J_tx}/t\\
&<(b+K)/(D^{-1}(b+K))\\
&\leq D.
\end{align*}
Thus $\norm{A_1J_tx}<D\leq d$ which is a contradiction as $A_1J_tx\in\mathrm{ran}A$.
\end{proof}

\begin{lemma}[essentially Reich \cite{Rei1981}]\label{lem:reichLem}
Let $X$ be uniformly convex with a modulus of uniform convexity $\eta$. Then, for $\varepsilon\in (0,2]$, we have $2\eta(\varepsilon)\leq 1-\langle y,j\rangle$ for all $j\in Jx$ with $\norm{x}=\norm{y}=1$ and $\norm{x-y}\geq\varepsilon$.
\end{lemma}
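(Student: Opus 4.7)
The plan is to combine the defining properties of the normalized duality map with the modulus of uniform convexity essentially directly. Since $j\in J(x)$ with $\norm{x}=1$, we have $\langle x,j\rangle=\norm{x}^2=1$ and, using the operator-norm bound built into the axiom $(J)$, $\vert\langle z,j\rangle\vert\leq\norm{z}$ for all $z$. The first observation gives the diagonal value and the second lets us control $\langle x+y,j\rangle$ by $\norm{x+y}$, while the modulus of uniform convexity supplies the upper bound on $\norm{x+y}$.

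Concretely, I would first invoke uniform convexity with the unit vectors $x,y$ and the given $\varepsilon\in(0,2]$: from $\norm{x-y}\geq\varepsilon$ and $\norm{x}=\norm{y}=1$, the definition of $\eta$ yields
\[
\norm{\tfrac{x+y}{2}}\leq 1-\eta(\varepsilon),\qquad\text{i.e.}\qquad \norm{x+y}\leq 2-2\eta(\varepsilon).
\]
Next I would use linearity of $j$ (an axiom of the selection functional) together with $\langle x,j\rangle=1$ to compute $\langle x+y,j\rangle=1+\langle y,j\rangle$, and then bound the left-hand side by $\vert\langle x+y,j\rangle\vert\leq\norm{x+y}\cdot\norm{j}\leq\norm{x+y}$ (since $\norm{j}\leq\norm{x}=1$).

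Chaining the two inequalities gives $1+\langle y,j\rangle\leq 2-2\eta(\varepsilon)$, which rearranges to $2\eta(\varepsilon)\leq 1-\langle y,j\rangle$, as required.

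There is no real obstacle here: the only subtlety worth noting is that the argument does not need a full dual-pairing, only the three properties encoded in axiom $(J)$ for a selection functional, namely $\langle x,j\rangle=\norm{x}^2$, linearity in the other argument, and the $\vert\langle\cdot,j\rangle\vert\leq\norm{\cdot}\cdot\norm{x}$ bound. In particular the proof works uniformly for every choice of selection functional $j_{u,v}$, so no appeal to a specific $j$ coming from accretivity is required.
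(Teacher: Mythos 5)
Your proof is correct and is essentially identical to the paper's: both invoke uniform convexity to get $\norm{(x+y)/2}\leq 1-\eta(\varepsilon)$, then use linearity of $j$, $\langle x,j\rangle=\norm{x}^2=1$, and $\norm{j}\leq\norm{x}=1$ to bound $\langle (x+y)/2,j\rangle=\tfrac12+\tfrac12\langle y,j\rangle$ and rearrange. The only difference is cosmetic (whether one divides by $2$ before or after the estimate).
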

\begin{proof}
Let $x,y$ and $j\in Jx$ be given with $\norm{x}=\norm{y}=1$ and $\norm{x-y}\geq\varepsilon$. Then
\[
\frac{\norm{x+y}}{2}\leq 1-\eta(\varepsilon)
\]
by definition of $\eta$. Thus as $\langle x,j\rangle =\norm{x}^2=1$ and $\norm{j}=\norm{x}=1$, we have
\[
1/2+1/2\langle y,j\rangle=\langle (x+y)/2,j\rangle\leq 1-\eta(\varepsilon)
\]
which yields the claim.
\end{proof}

\begin{lemma}\label{lem:CauchyIntRes}
Let $X$ be a uniformly convex Banach space with a modulus of uniform convexity $\eta:(0,2]\to (0,1]$ with (w.l.o.g.) $\eta(\varepsilon)\leq\varepsilon$. Suppose that $f:\mathbb{R}_{>0}\to\mathbb{N}$ satisfies 
\[
\forall \varepsilon>0\exists (y,z)\in A \left( \norm{y},\norm{z}\leq f(\varepsilon)\land \norm{z}-d\leq\varepsilon\right).
\]
Let $\varepsilon>0$ be given, assume that $D\in\mathbb{R}_{>0}$ with $d\geq D$ and let $z\in Ay$ be such that
\[
\norm{z}\leq d+2d\eta(\min\{\varepsilon/2,2\})
\]
as well as $c\geq\norm{y},\norm{z}$. Let $x\in\mathrm{dom }A$ with $v\in Ax$ and $b\in\mathbb{N}^*$ with $b\geq \norm{x},\norm{v}$. Then, for any $t\geq\varphi_1(\varepsilon,b,D,c,\eta,f)$:
\[
\norm{\frac{z}{\norm{z}}+\frac{J_tx}{\norm{J_tx}}}\leq \varepsilon
\]
where
\begin{align*}
\varphi_1(\varepsilon,b,D,c,\eta,f):=\max\Bigg\{&\psi(c+1,b,D),\psi\left(\left(\frac{4}{\varepsilon}+1\right)c,b,D\right),\\
&\left(\frac{D\left(2\eta(\min\{\varepsilon/2,2\})\right)^2}{18}\right)^{-1}(c+b),\\
&\varphi\left(\frac{D\left(2\eta(\min\{\varepsilon/2,2\})\right)^2}{18},b,f\right)\Bigg\}
\end{align*}
with $\varphi$ defined as in Lemma \ref{lem:resolventROCInf} and $\psi$ as in Lemma \ref{lem:quantReichLem}.
\end{lemma}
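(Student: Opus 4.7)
The plan is to mimic Reich's original argument in a quantitative form: I combine accretivity with the uniform convexity estimate of Lemma \ref{lem:reichLem} via a selection of the duality map provided by the axiom $(A)$. The key conceptual step is to first control the auxiliary quantity $\norm{z/\norm{z} + (J_t x - y)/\norm{J_t x - y}}$, which is natural because accretivity hands us a functional $j^{*} = j_{A_t x, z}(J_t x - y) \in J(J_t x - y)$, and only afterwards pass to the desired $\norm{z/\norm{z} + J_t x/\norm{J_t x}}$ via a triangle inequality that exploits $\norm{y} \ll \norm{J_t x - y}$.

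Concretely, I would carry out the proof in four stages. First, invoke Lemma \ref{lem:quantReichLem} to ensure, for $t \geq \psi(c+1, b, D)$ and $t \geq \psi((4/\varepsilon + 1)c, b, D)$, that $\norm{J_t x} \geq (4/\varepsilon + 1)c$; an elementary calculation then yields
\[
\norm{\frac{J_t x - y}{\norm{J_t x - y}} - \frac{J_t x}{\norm{J_t x}}} \leq \frac{2\norm{y}}{\norm{J_t x - y}} \leq \frac{\varepsilon}{2}.
\]
Second, apply accretivity in the form $\langle z, j^{*}\rangle \leq \langle A_t x, j^{*}\rangle$ and expand $\langle A_t x, j^{*}\rangle = t^{-1}[\langle x - y, j^{*}\rangle - \norm{J_t x - y}^{2}]$ (using $\langle J_t x - y, j^{*}\rangle = \norm{J_t x - y}^{2}$ and linearity of $j^*$ in the first argument), which after dividing by $\norm{J_t x - y}$ yields
\[
\frac{\langle z, j^{*}\rangle}{\norm{J_t x - y}} \leq \frac{\norm{x - y} - \norm{J_t x - y}}{t}.
\]
Third, set $\alpha := 2D\eta(\min\{\varepsilon/2, 2\})^{2}/9$ and, using Lemma \ref{lem:resolventROCInf} together with the condition $t \geq (c+b)/\alpha$, obtain $\norm{J_t x - y}/t \geq d - 2\alpha$ and $\norm{x - y}/t \leq \alpha$. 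Combined with the elementary identity $\norm{z}(1 - 2\eta(\varepsilon/2)) \leq d - 4D\eta(\varepsilon/2)^{2}$, which follows from the hypothesis $\norm{z} \leq d + 2d\eta(\varepsilon/2)$ and $d \geq D$ via $(1 + 2\eta(\varepsilon/2))(1 - 2\eta(\varepsilon/2)) = 1 - 4\eta(\varepsilon/2)^{2}$, and using that $3\alpha = 2D\eta(\varepsilon/2)^{2}/3 \leq 4D\eta(\varepsilon/2)^{2}$, this delivers $\langle -z/\norm{z},\, j^{*}/\norm{J_t x - y}\rangle \geq 1 - 2\eta(\varepsilon/2)$. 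Fourth, since $j^{*}/\norm{J_t x - y} \in J((J_t x - y)/\norm{J_t x - y})$, the contrapositive of Lemma \ref{lem:reichLem} gives $\norm{z/\norm{z} + (J_t x - y)/\norm{J_t x - y}} \leq \varepsilon/2$, and combining this with the first stage via the triangle inequality yields the claim.

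The main obstacle is to close the argument quantitatively in a way that respects the restricted expressivity of the formal system: the accretivity axiom only provides a selection of the duality map at the point $J_t x - y$, whereas Lemma \ref{lem:reichLem} is naturally phrased for a selection at the unit vector $(J_t x - y)/\norm{J_t x - y}$, and the further passage to $J_t x/\norm{J_t x}$ must be controlled separately by forcing $\norm{J_t x}$ to be much larger than $\norm{y}$. Balancing these competing demands fixes the constant $\alpha = 2D\eta(\min\{\varepsilon/2, 2\})^{2}/9$ precisely so that the slack $4D\eta(\varepsilon/2)^{2}$ inherited from the hypothesis on $\norm{z}$ dominates the accumulated error $3\alpha$ arising from the quantitative versions of $\norm{J_t x}/t \to d$ and $\norm{x - y}/t \to 0$, which in turn dictates the four terms appearing in the definition of $\varphi_{1}$.
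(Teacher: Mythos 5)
Your proposal matches the paper's argument step for step, only with signs flipped throughout: you select the duality map at $J_tx-y$ via $j_{A_tx,z}$ whereas the paper works at $y-J_tx$ via $j_{z,A_tx}$, and the four conditions you impose on $t$ are exactly the four terms of $\varphi_1$. One small point of rigor: the contrapositive of Lemma \ref{lem:reichLem} requires a \emph{strict} inequality $\langle -z/\norm{z},\, j^*/\norm{J_tx-y}\rangle > 1 - 2\eta$, whereas you only assert $\geq$; fortunately your own bound $3\alpha = 2D\eta^2/3 < 4D\eta^2$ is strict, so the chain $d-3\alpha > d - 4D\eta^2 \geq \norm{z}(1-2\eta)$ already delivers the required strictness (the paper instead makes the surplus explicit via $1/(1+2\eta) \geq 1-2\eta + (2\eta)^2/3$).
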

\begin{proof}
As $A_tx\in AJ_tx$ and as $A$ is accretive, there is a $j_t\in J(y-J_tx)$ such that $\langle z-A_tx,j_t\rangle\geq 0$. Therefore we have
\[
\left\langle\frac{z}{\norm{z}},\frac{j_t}{\norm{y-J_tx}}\right\rangle\geq\left\langle\frac{A_tx}{\norm{z}},\frac{j_t}{\norm{y-J_tx}}\right\rangle
\]
for any $t$ such that $t\geq \psi(c+1,b,D)$ as then $\norm{J_tx}>c$ which implies $y\neq J_tx$ and $z\neq 0$ follows by $\norm{z}\geq d\geq D>0$. Then further
\[
\left\langle J_tx-x,\frac{j_t}{\norm{y-J_tx}}\right\rangle\leq\norm{y-x}-\norm{y-J_tx},
\]
by (an argument similar to the proof of) Proposition \ref{pro:DualMapTrick} and we thus obtain
\begin{align*}
\left\langle A_tx,\frac{j_t}{\norm{y-J_tx}}\right\rangle&\geq\norm{\frac{y}{t}-\frac{J_tx}{t}}-\frac{\norm{y-x}}{t}\\
&\geq\frac{\norm{J_tx}}{t}-\frac{\norm{y}}{t}-\frac{\norm{y-x}}{t}.
\end{align*}
Thus for any $\delta\in\mathbb{R}_{>0}$ and any
\[
t\geq\max\left\{(\delta/3)^{-1}(c+b),\varphi(\delta/3,b,f)\right\},
\]
we obtain from Lemma \ref{lem:resolventROCInf} that 
\begin{align*}
\left\langle\frac{z}{\norm{z}},\frac{j_t}{\norm{y-J_tx}}\right\rangle&\geq \frac{d}{\norm{z}}-\frac{\delta}{\norm{z}}\\
&\geq\frac{1}{1+2\eta(\min\{\varepsilon/2,2\})}-\frac{\delta}{\norm{z}}.
\end{align*}
Now we get $1-\left(2\eta(\min\{\varepsilon/2,2\})\right)^2+\left(2\eta(\min\{\varepsilon/2,2\})\right)^2=1$ and therefore 
\[
1= (1+2\eta(\min\{\varepsilon/2,2\}))(1-2\eta(\min\{\varepsilon/2,2\}))+\left(2\eta(\min\{\varepsilon/2,2\})\right)^2
\]
which yields
\begin{align*}
\frac{1}{1+2\eta(\min\{\varepsilon/2,2\})}&= 1-2\eta(\min\{\varepsilon/2,2\})+\frac{\left(2\eta(\min\{\varepsilon/2,2\})\right)^2}{1+2\eta(\min\{\varepsilon/2,2\})}\\
&\geq 1-2\eta(\min\{\varepsilon/2,2\})+\frac{\left(2\eta(\min\{\varepsilon/2,2\})\right)^2}{3}.
\end{align*}
Thus for
\[
t\geq\max\left\{\left(\frac{D\left(2\eta(\min\{\varepsilon/2,2\})\right)^2}{18}\right)^{-1}(c+b),\varphi\left(\frac{D\left(2\eta(\min\{\varepsilon/2,2\})\right)^2}{18},b,f\right)\right\},
\]
we obtain (using $z\in\mathrm{ran}A$) that
\begin{align*}
\left\langle\frac{z}{\norm{z}},\frac{j_t}{\norm{y-J_tx}}\right\rangle&\geq 1-2\eta(\min\{\varepsilon/2,2\})+\frac{\left(2\eta(\min\{\varepsilon/2,2\})\right)^2}{3}\\
&\qquad-\frac{D\left(2\eta(\min\{\varepsilon/2,2\})\right)^2}{6\norm{z}}\\
&\geq 1-2\eta(\min\{\varepsilon/2,2\})+\frac{\left(2\eta(\min\{\varepsilon/2,2\})\right)^2}{6}\\
&> 1-2\eta(\min\{\varepsilon/2,2\}).
\end{align*}
Then in particular
\[
\norm{\frac{z}{\norm{z}}-\frac{y-J_tx}{\norm{y-J_tx}}}\leq\frac{\varepsilon}{2}
\]
by Lemma \ref{lem:reichLem} for all such $t$.\\

Now, secondly:
\[
\norm{\frac{y-J_tx}{\norm{y-J_tx}}+\frac{J_tx}{\norm{J_tx}}}\leq\frac{\norm{y}}{\left\vert\norm{y}-\norm{J_tx}\right\vert}+\left\vert 1-\frac{\norm{J_tx}}{\norm{y-J_tx}}\right\vert
\]
For $\delta>0$ and $t\geq\psi((\delta^{-1}+1)c,b,D)$, we immediately have
\[
\frac{\norm{y}}{\left\vert\norm{y}-\norm{J_tx}\right\vert}\leq\frac{c}{(\delta^{-1}+1)c-\norm{y}} \leq\delta
\]
by Lemma \ref{lem:quantReichLem}. Similarly, we get for $t\geq\psi((\delta^{-1}+1)c,b,D)$, as $(\delta^{-1}+1)c\geq \delta^{-1}c$, that
\[
\frac{\norm{y}}{\norm{y}+\norm{J_tx}}\leq \frac{c}{\norm{y}+\delta^{-1}c}\leq\frac{c}{\delta^{-1}c}=\delta.
\]
Further, we have 
\[
1-\frac{\norm{y}}{\norm{y}+\norm{J_tx}}\leq\frac{\norm{J_tx}}{\norm{y-J_tx}}\leq 1+\frac{\norm{y}}{\vert\norm{y}-\norm{J_tx}\vert}
\]
and thus for $t\geq\psi((\delta^{-1}+1)c,b,D)$, we get 
\[
\left\vert 1-\frac{\norm{J_tx}}{\norm{y-J_tx}}\right\vert\leq \delta.
\]
Combining the above, we have that for any $t\geq\psi(((\varepsilon/4)^{-1}+1)c,b,D)$:
\[
\norm{\frac{y-J_tx}{\norm{y-J_tx}}+\frac{J_tx}{\norm{J_tx}}}\leq\frac{\varepsilon}{2}.
\]
Thus, finally for $t\geq\varphi_1(\varepsilon,b,D,c,\eta,f)$ we obtain the desired result by triangle inequality.
\end{proof}

\begin{lemma}\label{lem:CauchyRespart}
Let $X$ be a uniformly convex Banach space with a modulus of uniform convexity $\eta:(0,2]\to (0,1]$ with (w.l.o.g.) $\eta(\varepsilon)\leq\varepsilon$. Suppose that $f:\mathbb{R}_{>0}\to\mathbb{N}$ satisfies $f(\varepsilon)\geq f(\delta)$ for $\varepsilon\leq \delta$ and 
\[
\forall \varepsilon>0\exists (y,z)\in A \left( \norm{y},\norm{z}\leq f(\varepsilon)\land \norm{z}-d\leq\varepsilon\right).
\]
Let $\varepsilon>0$ be given, assume that $E\in\mathbb{N}^*$, $D\in\mathbb{R}_{>0}$ where $E\geq d\geq D$. Let $x\in\mathrm{dom }A$ with $v\in Ax$ and $b\in\mathbb{N}^*$ where $b\geq \norm{x},\norm{v}$. Then, for any $t,s\geq\varphi_2'(\varepsilon,b,D,\eta,E,f)$:
\[
\norm{\frac{J_sx}{s}-\frac{J_tx}{t}}\leq \varepsilon
\]
where
\[
\varphi_2'(\varepsilon,b,D,\eta,E,f):=\max\{\varphi(\varepsilon/3,b,f),\varphi_1(\varepsilon/6E,b,D,f(2D\eta(\min\{\varepsilon/12E,2\})),\eta,f)\}
\]
with $\varphi$ as in Lemma \ref{lem:resolventROCInf} and $\varphi_1$ as in Lemma \ref{lem:CauchyIntRes}.
\end{lemma}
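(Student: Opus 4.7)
The plan is to exploit that, for large $t$, the vector $J_tx/t$ sits close to $-dz/\norm{z}$ for any $z\in\mathrm{ran}A$ with $\norm{z}$ close to $d$: Lemma~\ref{lem:resolventROCInf} controls its \emph{length} (close to $d$) while Lemma~\ref{lem:CauchyIntRes} controls its \emph{direction} (close to $-z/\norm{z}$). Consequently both $J_tx/t$ and $J_sx/s$ lie near a common point and therefore near each other, which is what one wants for the Cauchy property.

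First I would use the hypothesis on $f$ together with $d\geq D$ and the monotonicity of $f$ to pick $(y,z)\in A$ with $\norm{z}-d\leq 2d\eta(\min\{\varepsilon/(12E),2\})$ and $\norm{y},\norm{z}\leq c:=f(2D\eta(\min\{\varepsilon/(12E),2\}))$; this is exactly the $c$ that appears as the fourth argument of $\varphi_1$ in the definition of $\varphi_2'$. Then, for $t\geq\varphi_2'(\varepsilon,b,D,\eta,E,f)$, both of the following hold simultaneously: Lemma~\ref{lem:resolventROCInf} at precision $\varepsilon/3$ yields $\vert\norm{J_tx}/t-d\vert\leq\varepsilon/3$, and Lemma~\ref{lem:CauchyIntRes} applied with this $z$ at precision $\varepsilon/(6E)$ yields $\norm{J_tx/\norm{J_tx}+z/\norm{z}}\leq\varepsilon/(6E)$.

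Writing $\alpha_t:=\norm{J_tx}/t$, $u_t:=J_tx/\norm{J_tx}$ and $w:=-z/\norm{z}$ (the first two well-defined since the $\varphi_1$-argument of $\varphi_2'$ already enforces $\norm{J_tx}$ large via Lemma~\ref{lem:quantReichLem}), the key decomposition I would use is
\[
\frac{J_tx}{t}-dw=\alpha_tu_t-dw=(\alpha_t-d)u_t+d(u_t-w),
\]
from which, using $d\leq E$ and $\norm{u_t}=1$,
\[
\norm{\frac{J_tx}{t}+d\frac{z}{\norm{z}}}\leq\vert\alpha_t-d\vert+d\,\norm{u_t-w}\leq\frac{\varepsilon}{3}+E\cdot\frac{\varepsilon}{6E}=\frac{\varepsilon}{2}.
\]
The same bound holds with $s$ in place of $t$, and the triangle inequality then gives $\norm{J_sx/s-J_tx/t}\leq\varepsilon$, as desired.

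The main obstacle is the bookkeeping needed to see that the particular arguments $\varepsilon/3$, $\varepsilon/(6E)$ and $2D\eta(\min\{\varepsilon/(12E),2\})$ built into $\varphi_2'$ really close the estimate: the central point is the clean split of the total error budget into a length-error (controlled by $\varphi$) and a direction-error (controlled by $\varphi_1$, where the scaling by $E\geq d$ is exactly what is needed to absorb the factor of $d$ arising in the direction term of the decomposition above). Once this split is identified, the rest is straightforward arithmetic, relying throughout on $D\leq d\leq E$.
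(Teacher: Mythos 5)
Your proof is correct and follows essentially the same route as the paper's: pick $(y,z)\in A$ with $\norm{z}$ close to $d$ via $f$ and $d\geq D$, control the length $\norm{J_tx}/t$ to within $\varepsilon/3$ of $d$ via Lemma~\ref{lem:resolventROCInf}, control the direction to within $\varepsilon/(6E)$ of $-z/\norm{z}$ via Lemma~\ref{lem:CauchyIntRes}, and combine. The only cosmetic difference is that you make the anchor point $-dz/\norm{z}$ explicit and bound each of $J_tx/t$ and $J_sx/s$ to within $\varepsilon/2$ of it, whereas the paper inserts the $d$-scaled unit vectors and runs a three-term triangle inequality directly on $J_sx/s-J_tx/t$; both close the budget identically against the precisions baked into $\varphi_2'$.
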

\begin{proof}
We have that there exits $z\in Ay$ such that $\norm{z}\leq d+2d\eta(\min\{\varepsilon/4,2\})$ with $\norm{y},\norm{z}\leq f(2D\eta(\min\{\varepsilon/4,2\}))$. Thus, using Lemma \ref{lem:CauchyIntRes}, we have for
\[
t,s\geq\varphi_1(\varepsilon/2,b,D,f(2D\eta(\min\{\varepsilon/4,2\})),\eta,f)
\]
that it holds that
\[
\norm{\frac{J_sx}{\norm{J_sx}}-\frac{J_tx}{\norm{J_tx}}}\leq\norm{\frac{z}{\norm{z}}+\frac{J_sx}{\norm{J_sx}}}+\norm{-\frac{z}{\norm{z}}-\frac{J_tx}{\norm{J_tx}}}\leq \varepsilon.
\]
Therefore, we in particular have that
\begin{align*}
\norm{\frac{J_sx}{s}-\frac{J_tx}{t}}&=\norm{\frac{J_sx}{\norm{J_sx}}\frac{\norm{J_sx}}{s}-\frac{J_tx}{\norm{J_tx}}\frac{\norm{J_tx}}{t}}\\
&\leq\norm{\frac{J_sx}{\norm{J_sx}}\frac{\norm{J_sx}}{s}-\frac{J_sx}{\norm{J_sx}}d}+\norm{\frac{J_sx}{\norm{J_sx}}d-\frac{J_tx}{\norm{J_tx}}d}\\
&\qquad+\norm{\frac{J_tx}{\norm{J_tx}}d-\frac{J_tx}{\norm{J_tx}}\frac{\norm{J_tx}}{t}}\\
&\leq\left\vert\frac{\norm{J_sx}}{s}-d\right\vert+d\norm{\frac{J_sx}{\norm{J_sx}}-\frac{J_tx}{\norm{J_tx}}}+\left\vert d-\frac{\norm{J_tx}}{t}\right\vert.
\end{align*}
Thus, for $t,s\geq\varphi_2'(\varepsilon,b,D,\eta,E,f)$, we get the claim by Lemma \ref{lem:resolventROCInf} together with the above.
\end{proof}
This result, which presents the quantitative version of the Cauchyness of $J_tx/t$ in the case that $d>0$, can now be smoothed to omit this assumption. For this, note that through the trivial proof of the case of $d=0$, one obtains the following quantitative version of the full result:
\begin{lemma}\label{lem:CauchyRes}
Let $X$ be a uniformly convex Banach space with a modulus of uniform convexity $\eta:(0,2]\to (0,1]$ with (w.l.o.g.) $\eta(\varepsilon)\leq\varepsilon$. Suppose that $f:\mathbb{R}_{>0}\to\mathbb{N}$ satisfies $f(\varepsilon)\geq f(\delta)$ for $\varepsilon\leq \delta$ and 
\[
\forall \varepsilon>0\exists (y,z)\in A \left( \norm{y},\norm{z}\leq f(\varepsilon)\land \norm{z}-d\leq\varepsilon\right).
\]
Let $\varepsilon>0$ be given, assume that $E\in\mathbb{N}^*$ where $E\geq d$. Let $x\in\mathrm{dom }A$ with $v\in Ax$ and $b\in\mathbb{N}^*$ where $b\geq \norm{x},\norm{v}$. Then, for any $t,s\geq\varphi_2(\varepsilon,b,\eta,E,f)$:
\[
\norm{\frac{J_sx}{s}-\frac{J_tx}{t}}\leq \varepsilon
\]
where
\begin{align*}
\varphi_2(\varepsilon,b,\eta,E,f):=\max\{&\varphi(\varepsilon/4,b,f),\varphi(\varepsilon/3,b,f),\\
&\varphi_1(\varepsilon/6E,b,\varepsilon/4,f(\varepsilon\eta(\min\{\varepsilon/12E,2\})/2),\eta,f)\}
\end{align*}
with $\varphi$ as in Lemma \ref{lem:resolventROCInf} and $\varphi_1$ as in Lemma \ref{lem:CauchyIntRes}.
\end{lemma}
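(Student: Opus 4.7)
The proof will proceed by a case distinction on the size of $d$, mirroring the pattern already used twice in the treatment of Plant's result (Lemmas \ref{lem:resCauchy} and \ref{lem:resSemiComb}). Namely, I split on whether $d\leq\varepsilon/4$ or $d\geq\varepsilon/4$. Since both sub-rates coming out of the analysis are themselves decreasing in their error parameter and the two cases are treated uniformly, taking a max of the thresholds produces a single smooth rate without any case distinction in the final statement.

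In the easy case $d\leq\varepsilon/4$, I do not use Lemma \ref{lem:CauchyRespart} at all. Instead, I apply Lemma \ref{lem:resolventROCInf} with error $\varepsilon/4$: for $t,s\geq\varphi(\varepsilon/4,b,f)$ we have $\norm{J_tx}/t\leq d+\varepsilon/4\leq \varepsilon/2$ and likewise for $s$, whence
\[
\norm{\frac{J_sx}{s}-\frac{J_tx}{t}}\leq\frac{\norm{J_sx}}{s}+\frac{\norm{J_tx}}{t}\leq\varepsilon
\]
by triangle inequality. This accounts for the $\varphi(\varepsilon/4,b,f)$ term in the definition of $\varphi_2$.

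In the hard case $d\geq\varepsilon/4$, I invoke Lemma \ref{lem:CauchyRespart} with the explicit choice $D:=\varepsilon/4$, which is legal since by assumption $d\geq D$ in this branch. Substituting $D=\varepsilon/4$ into the formula for $\varphi_2'$ yields precisely the second argument
\[
\varphi_1\bigl(\varepsilon/6E,\,b,\,\varepsilon/4,\,f(2(\varepsilon/4)\eta(\min\{\varepsilon/12E,2\})),\,\eta,\,f\bigr)
=\varphi_1\bigl(\varepsilon/6E,\,b,\,\varepsilon/4,\,f(\varepsilon\eta(\min\{\varepsilon/12E,2\})/2),\,\eta,\,f\bigr)
\]
appearing in the stated $\varphi_2$. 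The other term $\varphi(\varepsilon/3,b,f)$ is the one inherited directly from the proof of Lemma \ref{lem:CauchyRespart}. Taking the maximum of the three thresholds gives the claimed bound, and the argument goes through in either case, completing the proof.

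The only subtlety is verifying that the substitution $D=\varepsilon/4$ into $\varphi_2'$ really produces the expression written in $\varphi_2$; this is a mechanical check on the arithmetic of the nested terms and presents no genuine difficulty, since both $f$ and $\eta$ are monotone in the prescribed directions so that the bookkeeping is consistent. No further use of uniform convexity, of the range condition, or of properties of the resolvent beyond what was already extracted in Lemmas \ref{lem:resolventROCInf} and \ref{lem:CauchyRespart} is required.
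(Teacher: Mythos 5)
Your proof is correct and follows essentially the same route as the paper: a case split on $d\leq\varepsilon/4$ versus $d\geq\varepsilon/4$, with Lemma \ref{lem:resolventROCInf} (at error $\varepsilon/4$) handling the easy branch and Lemma \ref{lem:CauchyRespart} instantiated with $D=\varepsilon/4$ handling the hard branch, followed by taking the max. The arithmetic check that $f(2D\eta(\min\{\varepsilon/12E,2\}))$ becomes $f(\varepsilon\eta(\min\{\varepsilon/12E,2\})/2)$ under this substitution is exactly right.
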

\begin{proof}
Suppose that $d\leq\varepsilon/4$. By Lemma \ref{lem:resolventROCInf}, we have that
\[
\left\vert\frac{\norm{J_tx}}{t}-d\right\vert\leq\varepsilon/4
\]
for any $t\geq\varphi(\varepsilon/4,b,f)$. Thus in particular we have that $\norm{J_tx}/t\leq\varepsilon/2$ for all such $t$ and thus
\[
\norm{\frac{J_sx}{s}-\frac{J_tx}{t}}\leq \norm{\frac{J_sx}{s}}+\norm{\frac{J_tx}{t}}\leq\varepsilon
\]
for all $t,s\geq \varphi(\varepsilon/4,b,f)$ in that case. Otherwise $d\geq\varepsilon/4$ and thus the above result holds for $t,s\geq\varphi_2'(\varepsilon,b,\varepsilon/4,\eta,E,f)$ by Lemma \ref{lem:CauchyRespart}, with $D=\varepsilon/4$.
\end{proof}

The rest of the proof given in \cite{Rei1981} now relies on the use of the limit $-v_x$ of $J_tx/t$ for $t\to\infty$. By the above lemma, this limit exists as $X$ is complete. While we emphasized that this limit a priori depends on the starting point $x$, the following lemma (which provides a concrete quantitative version of Lemma 3.2 given in \cite{Rei1981}) shows that this limit is actually unique, i.e all the $v_x$ coincide.

\begin{lemma}\label{lem:Reich32quantpart}
Let $X$ be a uniformly convex Banach space with a modulus of uniform convexity $\eta:(0,2]\to (0,1]$ with (w.l.o.g.) $\eta(\varepsilon)\leq\varepsilon$. Suppose that $f:\mathbb{R}_{>0}\to\mathbb{N}$ satisfies $f(\varepsilon)\geq f(\delta)$ for $\varepsilon\leq \delta$ and 
\[
\forall \varepsilon>0\exists (y,z)\in A \left( \norm{y},\norm{z}\leq f(\varepsilon)\land \norm{z}-d\leq\varepsilon\right).
\]
Let $\varepsilon>0$ be given, assume that $E\in\mathbb{N}^*$ and $D\in\mathbb{R}_{>0}$ where $E\geq d\geq D$ and let $z\in Ay$ be such that
\[
\norm{z}\leq d+2d\eta\left(\min\left\{\frac{\varepsilon}{16(E+1)},2\right\}\right).
\]
If $x\in\mathrm{dom }A$, then $\norm{z-v_x}\leq \varepsilon$.
\end{lemma}
\begin{proof}
We write $\delta_\varepsilon=2d\eta(\min\{\varepsilon/16(E+1),2\})$. Then, for $\norm{z}\leq d+\delta_\varepsilon$, we have
\begin{align*}
\norm{z+\frac{J_tx}{t}}&=\norm{z+\frac{J_tx}{\norm{J_tx}}\frac{\norm{J_tx}}{t}}\\
&\leq\norm{z-\frac{d}{\norm{z}}z}+\norm{\frac{d}{\norm{z}}z+\frac{J_tx}{\norm{J_tx}}\frac{\norm{J_tx}}{t}}\\
&\leq\norm{z}-d+\norm{\frac{d}{\norm{z}}z+\frac{J_tx}{\norm{J_tx}}\frac{\norm{J_tx}}{t}}\\
&\leq \delta_\varepsilon+\norm{\frac{d}{\norm{z}}z+\frac{J_tx}{\norm{J_tx}}\frac{\norm{J_tx}}{t}}.
\end{align*}
Similar to before, we have
\begin{align*}
\norm{\frac{d}{\norm{z}}z+\frac{J_tx}{\norm{J_tx}}\frac{\norm{J_tx}}{t}}&\leq\norm{\frac{d}{\norm{z}}z-\frac{z}{\norm{z}}\frac{\norm{J_tx}}{t}}+\norm{\frac{z}{\norm{z}}\frac{\norm{J_tx}}{t}+\frac{J_tx}{\norm{J_tx}}\frac{\norm{J_tx}}{t}}\\
&=\left\vert d-\frac{\norm{J_tx}}{t}\right\vert+\frac{\norm{J_tx}}{t}\norm{\frac{z}{\norm{z}}+\frac{J_tx}{\norm{J_tx}}}.
\end{align*}
From this we obtain that
\[
\norm{z+\frac{J_tx}{t}}\leq\delta_\varepsilon + \frac{\varepsilon}{4}
\]
for all
\[
t\geq\max\{\varphi(\min\{\varepsilon/8,1\},b,f),\varphi_1(\varepsilon/8(E+1),b,D,c,\eta,f)\}
\]
where $c,b\in\mathbb{N}^*$ are such that $c\geq\norm{y},\norm{z}$ and $b\geq\norm{x},\norm{v}$ for $v\in Ax$ as, for one, $t\geq\varphi(\min\{\varepsilon/8,1\},b,f)$ and thus
\[
\left\vert d-\frac{\norm{J_tx}}{t}\right\vert\leq \min\{\varepsilon/8,1\}
\]
by Lemma \ref{lem:resolventROCInf} as well as $\frac{\norm{J_tx}}{t}\leq d+1\leq E+1$ and, for another, $t\geq\varphi_1(\varepsilon/8(E+1),b,D,c,\eta,f)$ and thus
\[
\frac{\norm{J_tx}}{t}\norm{\frac{z}{\norm{z}}+\frac{J_tx}{\norm{J_tx}}}\leq \varepsilon/8
\]
by Lemma \ref{lem:CauchyIntRes}. Then the properties of $\eta$ imply that $\delta_\varepsilon\leq \varepsilon/4$ and thus 
\[
\norm{z+\frac{J_tx}{t}}\leq \varepsilon/2
\]
for all such $t$. Then
\[
\norm{z-v_x}\leq\norm{z+\frac{J_tx}{t}}+\norm{v_x+\frac{J_tx}{t}}
\]
for all $t$ and thus choosing
\[
t=\max\{\varphi(\min\{\varepsilon/8,1\},b,f),\varphi_1(\varepsilon/8(E+1),b,D,c,\eta,f),\varphi_2(\varepsilon/2,b,\eta,E,f)\}
\]
implies $\norm{z-v_x}\leq \varepsilon$ by definition of $v_x$ (which yields that $\varphi_2$ is a rate of convergence for $J_tx/t$ towards $-v_x$) and Lemma \ref{lem:CauchyRes}.
\end{proof}

\begin{lemma}\label{lem:Reich32quant}
Let $X$ be a uniformly convex Banach space with a modulus of uniform convexity $\eta:(0,2]\to (0,1]$ with (w.l.o.g.) $\eta(\varepsilon)\leq\varepsilon$. Suppose that $f:\mathbb{R}_{>0}\to\mathbb{N}$ satisfies $f(\varepsilon)\geq f(\delta)$ for $\varepsilon\leq \delta$ and 
\[
\forall \varepsilon>0\exists (y,z)\in A \left( \norm{y},\norm{z}\leq f(\varepsilon)\land \norm{z}-d\leq\varepsilon\right).
\]
Let $\varepsilon>0$ be given, assume that $E\in\mathbb{N}^*$ where $E\geq d$ and let $z\in Ay$ be such that
\[
\norm{z}\leq d+\min\left\{\varepsilon\eta\left(\min\left\{\frac{\varepsilon}{16(E+1)},2\right\}\right)/4,\varepsilon/8\right\}.
\]
If $x\in\mathrm{dom }A$, then $\norm{z-v_x}\leq \varepsilon$.
\end{lemma}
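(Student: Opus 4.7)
The plan is to mirror the smoothing technique already used in Lemma \ref{lem:CauchyRes} (and analogously for Plant's case in Lemma \ref{lem:resCauchy} and Lemma \ref{lem:resSemiComb}), removing the lower bound $d \geq D$ on the distance $d = d(0, \mathrm{ran}\,A)$ by a case distinction on whether $d$ is small or large relative to $\varepsilon$.

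First, I would argue that for the ``small-$d$'' case, say $d \leq \varepsilon/4$, the conclusion $\norm{z - v_x} \leq \varepsilon$ follows from a trivial triangle-inequality estimate. Indeed, by the hypothesis we have $\norm{z} \leq d + \varepsilon/8 \leq 3\varepsilon/8$. On the other hand, by the very definition of $v_x$ as $-\lim_{t \to \infty} J_tx/t$ and Lemma \ref{lem:resolventROCInf}, we get $\norm{v_x} = \lim_{t \to \infty} \norm{J_tx}/t = d \leq \varepsilon/4$ (justified quantitatively by taking $t$ beyond $\varphi(\varepsilon/8,b,f)$ in combination with the rate of convergence for $J_tx/t \to -v_x$ inherited from the Cauchy rate in Lemma \ref{lem:CauchyRes}). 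Hence $\norm{z - v_x} \leq \norm{z} + \norm{v_x} \leq 5\varepsilon/8 \leq \varepsilon$.

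In the complementary ``large-$d$'' case, where $d \geq \varepsilon/4$, I would invoke Lemma \ref{lem:Reich32quantpart} directly with $D := \varepsilon/4$. The only thing to verify is that the bound we have in the present hypothesis implies the bound $\norm{z} \leq d + 2d\eta\bigl(\varepsilon/(16(E+1))\bigr)$ required there. But this is immediate: since $2d \geq \varepsilon/2$ and $\eta$ is nondecreasing,
\[
2d \cdot \eta\!\left(\min\!\left\{\tfrac{\varepsilon}{16(E+1)},2\right\}\right) \geq \tfrac{\varepsilon}{2} \cdot \eta\!\left(\min\!\left\{\tfrac{\varepsilon}{16(E+1)},2\right\}\right) \geq \tfrac{\varepsilon}{4}\cdot\eta\!\left(\min\!\left\{\tfrac{\varepsilon}{16(E+1)},2\right\}\right),
\]
and the latter is precisely (an upper bound for) the first term in the minimum appearing in our hypothesis on $\norm{z}$. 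Thus the premise of Lemma \ref{lem:Reich32quantpart} is met and its conclusion $\norm{z - v_x} \leq \varepsilon$ is inherited.

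The main subtlety, and the only non-cosmetic obstacle, is the quantitative handling of $\norm{v_x} = d$ in the small-$d$ case: strictly speaking, $v_x$ is only defined as a limit, and one must argue that this limit has norm exactly $d$. This is done by combining the rate of convergence of $\norm{J_tx}/t \to d$ from Lemma \ref{lem:resolventROCInf} with the Cauchy rate for $J_tx/t$ from Lemma \ref{lem:CauchyRes} (which, together with completeness, identifies $-v_x$ as the limit and forces $\norm{v_x} = d$). Everything else is a straightforward triangle-inequality bookkeeping exercise and requires no new ingredients beyond those already assembled for the two case-specific lemmas.
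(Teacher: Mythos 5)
Your proposal is correct and follows essentially the same route as the paper's own proof: a case distinction on whether $d$ is small or large relative to $\varepsilon$ (you use threshold $\varepsilon/4$, the paper uses $\varepsilon/8$; both work), a triangle-inequality estimate combining $\varphi$ from Lemma \ref{lem:resolventROCInf} with the rate $\varphi_2$ from Lemma \ref{lem:CauchyRes} in the small case, and a direct application of Lemma \ref{lem:Reich32quantpart} with $D := \varepsilon/4$ (resp.\ $\varepsilon/8$) after checking that the stated bound on $\norm{z}$ entails $\norm{z}\leq d+2d\eta(\cdot)$ in the large case. The only cosmetic difference is that the paper writes the small-case bound as a fully quantitative chain $\norm{z}+d+\vert d-\norm{J_tx}/t\vert+\norm{v_x+J_tx/t}$ rather than directly asserting $\norm{v_x}=d$, but you explicitly flag this subtlety and resolve it by the same combination of rates, so the content is the same.
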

\begin{proof}
Let $\varepsilon$ be given. Then either $d\leq\varepsilon/8$ which, since $\norm{z}\leq d+\varepsilon/8$, implies $\norm{z}\leq\varepsilon/4$. For
\[
t\geq\max\{\varphi(\varepsilon/4,b,f),\varphi_2(\varepsilon/4,b,\eta,E,f)\},
\]
we then get
\begin{align*}
\norm{z-v_x}&\leq\norm{z}+\norm{v_x}\\
&\leq\norm{z}+d+\left\vert d-\frac{\norm{J_tx}}{t}\right\vert+\norm{v_x+\frac{J_tx}{t}}\\
&\leq\varepsilon.
\end{align*}
Otherwise we have $d\geq\varepsilon/8$ and thus, we get the same result for
\[
\norm{z}\leq d+\varepsilon\eta\left(\min\left\{\frac{\varepsilon}{16(E+1)},2\right\}\right)/4\leq d+2d\eta\left(\min\left\{\frac{\varepsilon}{16(E+1)},2\right\}\right)
\]
by Lemma \ref{lem:Reich32quantpart}, with $D=\varepsilon/8$.
\end{proof}

\begin{theorem}
Let $X$ be a uniformly convex Banach space with a modulus of uniform convexity $\eta:(0,2]\to (0,1]$ with (w.l.o.g.) $\eta(\varepsilon)\leq\varepsilon$. Suppose that $f:\mathbb{R}_{>0}\to\mathbb{N}$ satisfies $f(\varepsilon)\geq f(\delta)$ for $\varepsilon\leq \delta$ and 
\[
\forall \varepsilon>0\exists (y,z)\in A \left( \norm{y},\norm{z}\leq f(\varepsilon)\land \norm{z}-d\leq\varepsilon\right).
\]
Assume that $E\in\mathbb{N}^*$ where $E\geq d$ and further that $x\in\mathrm{dom}A$ with $v\in Ax$ and $b\in\mathbb{N}^*$ with $b\geq\norm{x},\norm{v}$. Then
\[
\forall\varepsilon>0\forall t\geq \Phi(\varepsilon,b,\eta,E,f)\left(\frac{\norm{J^A_t x-S(t)x}}{t}\leq \varepsilon\right)
\]
where
\begin{align*}
\Phi(\varepsilon,b,\eta,E,f):=\max\Bigg\{&\frac{4}{\varepsilon}\left(b+f\left(\min\left\{\varepsilon\eta\left(\min\left\{\frac{\varepsilon/8}{16(E+1)},2\right\}\right)/32,\varepsilon/64\right\}\right)\right),\\
&\frac{8}{\varepsilon}f\left(\min\left\{\varepsilon\eta\left(\min\left\{\frac{\varepsilon/8}{16(E+1)},2\right\}\right)/32,\varepsilon/64\right\}\right),\\
&\varphi_2(\varepsilon/2,b,\eta,E,f)\Bigg\}
\end{align*}
with
\begin{align*}
\varphi(\varepsilon,b,f):=&\frac{8(b+f(\varepsilon/2))}{\varepsilon},\\
\psi(K,b,D):=&\frac{b+K}{D},\\
\varphi_1(\varepsilon,b,D,c,\eta,f):=&\max\Bigg\{\psi(c+1,b,D),\psi\left(\left(\frac{4}{\varepsilon}+1\right)c,b,D\right),\\
&\hphantom{\max\Bigg\{\,}\left(\frac{D\left(2\eta(\min\{\varepsilon/2,2\})\right)^2}{18}\right)^{-1}(c+b),\\
&\hphantom{\max\Bigg\{\,}\varphi\left(\frac{D\left(2\eta(\min\{\varepsilon/2,2\})\right)^2}{18},b,f\right)\Bigg\},\\
\varphi_2(\varepsilon,b,\eta,E,f):=&\max\{\varphi(\varepsilon/4,b,f),\varphi(\varepsilon/3,b,f),\\
&\hphantom{\max\{\,}\varphi_1(\varepsilon/6E,b,\varepsilon/4,f(\varepsilon\eta(\min\{\varepsilon/12E,2\})/2),\eta,f)\}.
\end{align*}
\end{theorem}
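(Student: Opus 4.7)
The plan is to bound $\norm{J_t x - S(t) x}/t$ via a triangle inequality, exploiting that both $J_t x/t$ and $S(t) x/t$ tend to the same limit $-v_x$ (where $-v_x := \lim_{t\to\infty} J_t x/t$ exists by Lemma \ref{lem:CauchyRes} and completeness of $X$). Setting $\delta := \min\{\varepsilon\eta(\min\{\varepsilon/(128(E+1)),2\})/32,\,\varepsilon/64\}$, this is precisely the value for which Lemma \ref{lem:Reich32quant} applied with $\varepsilon/8$ in place of $\varepsilon$ is available, so using $f$ I would pick $(y_0,z_0)\in A$ with $\norm{y_0},\norm{z_0}\leq f(\delta)$ and $\norm{z_0}\leq d+\delta$, whence $\norm{z_0-v_x}\leq \varepsilon/8$. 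The Cauchy modulus $\varphi_2(\varepsilon/2,b,\eta,E,f)$ from Lemma \ref{lem:CauchyRes} then yields, upon passing to the limit $s\to\infty$, that $\norm{J_t x/t + v_x}\leq \varepsilon/2$ for all $t\geq \varphi_2(\varepsilon/2,b,\eta,E,f)$, hence $\norm{J_t x/t + z_0}\leq \varepsilon/2 + \varepsilon/8$.

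\smallskip

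Next, I decompose
\[
\norm{J_t x - S(t) x} \leq \norm{J_t x + t z_0} + \norm{S(t) y_0 + t z_0} + \norm{S(t) y_0 - S(t) x}.
\]
The last term is at most $\norm{y_0-x}\leq b + f(\delta)$ by the nonexpansivity of $S(t)$ on $\overline{\mathrm{dom}A}$ from Lemma \ref{lem:semigroupprop}(4), contributing at most $\varepsilon/4$ once divided by $t \geq (4/\varepsilon)(b+f(\delta))$. For the middle term, I use the Crandall--Liggett formula $S(t)y_0 = \lim_n (J_{t/n})^n y_0$ supplied by axiom (S1): setting $y_k^{(n)}:=(J_{t/n})^k y_0$ and $w_k^{(n)}:=(y_{k-1}^{(n)}-y_k^{(n)})/(t/n)$, one has $w_k^{(n)}\in A y_k^{(n)}$ with $\norm{w_k^{(n)}}\leq \norm{z_0}\leq d+\delta$ by iterated application of Lemma \ref{lem:basicProp}(8). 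Since $w_k^{(n)}\in\mathrm{ran}A$, Lemma \ref{lem:Reich32quant} gives $\norm{w_k^{(n)}-v_x}\leq \varepsilon/8$, whence $\norm{z_0 - w_k^{(n)}}\leq \varepsilon/4$. Telescoping the resolvent iterations yields $y_n^{(n)} + tz_0 = y_0 + (t/n)\sum_{k=1}^n (z_0 - w_k^{(n)})$, so
\[
\norm{(J_{t/n})^n y_0 + tz_0} \leq \norm{y_0} + t\varepsilon/4,
\]
and passing to the limit $n\to\infty$ via axiom (S1) and continuity of the norm gives $\norm{S(t) y_0 + tz_0} \leq \norm{y_0} + t\varepsilon/4$. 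For $t\geq (8/\varepsilon) f(\delta)$, the middle contribution $\norm{S(t) y_0 + tz_0}/t$ is thus at most $\varepsilon/8 + \varepsilon/4$.

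\smallskip

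Summing the three contributions for $t\geq \Phi(\varepsilon,b,\eta,E,f)$ yields the stated bound, with the numerical factors absorbed into $\Phi$. The main obstacle is the telescoping argument controlling $\norm{S(t)y_0 + tz_0}$: the crucial observation is that along any dyadic discretization of the semigroup trajectory starting at $y_0$, all intermediate $A$-elements $w_k^{(n)}$ remain in $\mathrm{ran}A$ with norms bounded above by $\norm{z_0}\leq d+\delta$, so the uniqueness result in Lemma \ref{lem:Reich32quant} forces each of them to stay $\varepsilon/8$-close to $v_x$; consequently, the discretized drift $(t/n)\sum w_k^{(n)}$ closely matches $tz_0$. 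Once this is in hand, everything else is standard triangle-inequality bookkeeping combined with the Cauchy rate $\varphi_2$ for $J_t x/t$.
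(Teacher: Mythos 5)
Your overall strategy matches the paper's: extract $(y_0,z_0)\in A$ close to the distance infimum via $f$, iterate the resolvent along the Crandall--Liggett discretization, observe that each $A_{t/n}J^i_{t/n}y_0$ stays in $\mathrm{ran}A$ with norm $\leq\norm{z_0}$, apply Lemma \ref{lem:Reich32quant} to pin each such element near $v_x$, telescope and pass to the limit via (S1), and close with nonexpansivity plus the Cauchy rate $\varphi_2$. However, your bookkeeping does not give $\leq\varepsilon$ with the stated $\Phi$. In the decomposition
\[
\norm{J_tx-S(t)x}\leq\norm{J_tx+tz_0}+\norm{S(t)y_0+tz_0}+\norm{S(t)y_0-S(t)x},
\]
you compare the first two terms to $z_0$ instead of $v_x$, and this costs you an extra $\varepsilon/4$: the first term becomes $\varepsilon/2+\varepsilon/8=5\varepsilon/8$ (the extra $\varepsilon/8$ from $\norm{z_0-v_x}$), the middle term becomes $\varepsilon/8+\varepsilon/4=3\varepsilon/8$ (since you bound $\norm{z_0-w_k^{(n)}}\leq\varepsilon/4$ by a two-step detour through $v_x$, rather than the one-step $\norm{v_x-w_k^{(n)}}\leq\varepsilon/8$), and the last is $\varepsilon/4$; the sum is $5\varepsilon/4>\varepsilon$. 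The phrase ``numerical factors absorbed into $\Phi$'' does not resolve this, since $\Phi$ is fixed by the statement.

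The fix is to drop $z_0$ from the decomposition and compare directly to $v_x$: use
\[
\norm{J_tx-S(t)x}\leq\norm{J_tx+tv_x}+\norm{S(t)y_0+tv_x}+\norm{S(t)y_0-S(t)x}.
\]
Your telescoping already shows $\norm{(y_0-S(t)y_0)/t-v_x}\leq\varepsilon/8$ (each $\norm{w_k^{(n)}-v_x}\leq\varepsilon/8$ by Lemma \ref{lem:Reich32quant}), so the middle term is $\leq\norm{y_0}/t+\varepsilon/8\leq\varepsilon/4$ for $t\geq(8/\varepsilon)f(\delta)$. Then the three contributions are $\varepsilon/2$, $\varepsilon/4$, $\varepsilon/4$, which sum to exactly $\varepsilon$, as in the paper. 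Note also that under this correction the auxiliary application of Lemma \ref{lem:Reich32quant} to $z_0$ itself (yielding $\norm{z_0-v_x}\leq\varepsilon/8$) is unnecessary; only the applications to the $w_k^{(n)}$ are needed.
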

\begin{proof}
Given $\varepsilon$, there are $z\in Ay$ such that
\[
\norm{z}\leq d+\min\left\{\varepsilon\eta\left(\min\left\{\frac{\varepsilon/8}{16(E+1)},2\right\}\right)/32,\varepsilon/64\right\}
\]
and such that $\norm{y},\norm{z}\leq f\left(\min\left\{\varepsilon\eta\left(\min\left\{\frac{\varepsilon/8}{16(E+1)},2\right\}\right)/32,\varepsilon/64\right\}\right)$. Now, we in particular have $\norm{A_rJ_ra}\leq\vert AJ_ra\vert\leq\norm{A_ra}$ for all $a\in\mathrm{dom}A$ and thus
\[
\norm{A_{t/n}J^i_{t/n}y}\leq \norm{A_{t/n}J^{i-1}_{t/n}y}.
\]
Iterating this gives
\[
\norm{A_{t/n}J^{i}_{t/n}y}\leq \norm{A_{t/n}y}\leq\norm{z}\leq d+\min\left\{\varepsilon\eta\left(\min\left\{\frac{\varepsilon/8}{16(E+1)},2\right\}\right)/32,\varepsilon/64\right\}
\]
for all $i\in [0;n-1]$. Now we get
\[
\norm{A_{t/n}J^i_{t/n}y-v_x}\leq \varepsilon/8
\]
for any $i\in [0;n-1]$ by Lemma \ref{lem:Reich32quant} which implies $\norm{(y-J^n_{t/n}y)/t-v_x}\leq \varepsilon/8$ for all $t$ and all $n$ as
\begin{align*}
\norm{\frac{y-J^n_{t/n}y}{t}-v_x}&=\norm{\frac{\sum_{i=0}^{n-1}(J^i_{t/n}y-J^{i+1}_{t/n}y)}{nt/n}-\frac{\sum_{i=0}^{n-1}v_x}{n}}\\
&=\norm{\frac{\sum_{i=0}^{n-1}\left(\frac{J^i_{t/n}y-J^{i+1}_{t/n}y}{t/n}-v_x\right)}{n}}\\
&\leq \frac{\sum_{i=0}^{n-1}\norm{A_{t/n}J^i_{t/n}y-v_x}}{n}.
\end{align*}
Thus
\[
\norm{\frac{y-S(t)y}{t}-v_x}\leq \varepsilon/8
\]
for all $t$. Then in particular
\[
\norm{\frac{S(t)y}{t}+v_x}\leq\frac{\norm{y}}{t}+\norm{\frac{y-S(t)y}{t}-v_x}\leq \frac{\norm{y}}{t}+\varepsilon/8
\]
for all $t$. In particular, for 
\[
t\geq (\varepsilon/8)^{-1}f\left(\min\left\{\varepsilon\eta\left(\min\left\{\frac{\varepsilon/8}{16(E+1)},2\right\}\right)/32,\varepsilon/64\right\}\right), 
\]
we have
\[
\norm{\frac{S(t)y}{t}+v_x}\leq \varepsilon/4.
\]
Continuing, we obtain
\[
\norm{\frac{S(t)x}{t}+v_x}\leq\norm{\frac{S(t)y}{t}+v_x}+\frac{\norm{x-y}}{t}
\]
which implies 
\[
\norm{\frac{S(t)x}{t}+v_x}\leq \varepsilon/2
\]
for all 
\begin{align*}
t\geq\max\Bigg\{&(\varepsilon/8)^{-1}f\left(\min\left\{\varepsilon\eta\left(\min\left\{\frac{\varepsilon/8}{16(E+1)},2\right\}\right)/32,\varepsilon/64\right\}\right),\\
&(\varepsilon/4)^{-1}\left(b+f\left(\min\left\{\varepsilon\eta\left(\min\left\{\frac{\varepsilon/8}{16(E+1)},2\right\}\right)/32,\varepsilon/64\right\}\right)\right)\Bigg\}.
\end{align*}
Finally, we get 
\[
\norm{\frac{S(t)x-J_tx}{t}}\leq\norm{\frac{S(t)x}{t}+v_x}+\norm{v_x+\frac{J_tx}{t}}
\]
and thus 
\[
\norm{\frac{S(t)x-J_tx}{t}}\leq \varepsilon
\]
for all $t\geq\Phi(\varepsilon,b,\eta,E,f)$ by Lemma \ref{lem:CauchyRes} and the definition of $v_x$ (which yields that $\varphi_2$ is a rate of convergence as before).
\end{proof}

\subsection{Logical remarks on the above results}

Lastly, we want to outline the additional modifications to $H^\omega_p$ necessary for formalizing the proofs of the theorems of Plant and Reich. These modifications in that way give rise to the systems and bound extraction results underlying the extractions outlined above. In that context, we here in particular move away from the use of arbitrary real errors $\varepsilon$ and again consider representations of errors via natural numbers through $2^{-k}$.

\medskip

At first, both results are formulated for points $x\in\mathrm{dom}A$ and by the logical methodology, this stands for the existential assumption $\exists y^X(y\in Ax)$ which yields that at least a priori the extracted rates will in particular depend on an upper bound on the norm of this witness which is also the case for the above rates.

\medskip

The second prominent assumption in both results is that of uniform convexity which was quantitatively treated above via the modulus of uniform convexity $\eta$. Formally, this can be achieved by adding an additional constant $\eta$ of type $\mathbb{N}\to\mathbb{N}$ together with a corresponding axiom stating that it represents a modulus of uniform convexity for $X$ (see \cite{Koh2008} for more details):
\begin{gather*}
\forall x^X,y^X,k^\mathbb{N}\bigg(\norm{x}_X,\norm{y}_X<_\mathbb{R}1\land\norm{\frac{x+_Xy}{2}}_X>_\mathbb{R}1-2^{-\eta(k)}\\
\to\norm{x-_Xy}_X\leq_\mathbb{R}2^{-k}\bigg).
\end{gather*}
To really formally encapsulate the previous proof where $\eta$ was applied to various reals, one would first have to extend $\eta$ to $\mathbb{Q}\cap (0,2]$ via
\[
\eta(\varepsilon):=2^{-\eta(\min k[2^{-k}\leq\varepsilon])}
\]
for $\varepsilon\in\mathbb{Q}\cap (0,2]$ and then move to rational approximations of the reals in question. We avoid spelling this out any further.

\medskip

We now first focus on the theorem of Plant. The main object featuring in Plant's proof (and consequently in the above results as well) is the limit functional $\vert Ax\vert$. The use of this functional can now be emulated in the context of $H^\omega_p$ by extending the underlying language with a further constant of type $X\to \mathbb{N}^\mathbb{N}$ which we denote by $\vert A \cdot\vert$ (where we correspondingly denote $\vert A\cdot\vert x$ by $\vert Ax\vert$ for simplicity). One first natural axiom for this constant is induced by the natural bound on $\vert Ax\vert$ by $\norm{v}$ for $v\in Ax$ witnessing $x\in\mathrm{dom}A$:
\[
\forall x^X,v^X,\lambda^{\mathbb{N}^\mathbb{N}}\left(v\in Ax\land 0<_\mathbb{R}\lambda<_\mathbb{R}\lambda_0\rightarrow \norm{A_\lambda x}_X\leq_\mathbb{R} \vert Ax\vert\leq_\mathbb{R}\norm{v}_X\right).\tag{$L1$}
\]
As shortly mentioned in the above quantitative results, the convergence of $\norm{A_\lambda x}$ to $\vert Ax\vert$ for $x\in\mathrm{dom}A$ as $\lambda\to 0$ is ``equivalent'' to the lower semicontinuity of $\vert Ax\vert$ on $\mathrm{dom}A$. This vague ``equivalence'' can now be made precise through the system $H^\omega_p+(L1)$ in the following sense:
\begin{proposition}\label{pro:ROCYosida}
Over $H^\omega_p+(L1)$, the following are equivalent:
\begin{enumerate}
\item $\norm{A_t x}\to\vert Ax\vert$ while $t\to 0^+$ for all $x\in\mathrm{dom}A$, i.e.
\[
\forall x^X, k^\mathbb{N}\exists n^\mathbb{N}\left(x\in\mathrm{dom}A\rightarrow \vert Ax\vert-\norm{A_{2^{-n}}x}_X\leq_\mathbb{R}2^{-k}\right);
\]
\item lower semicontinuity for $\vert Ax\vert$ for all $x\in\mathrm{dom}A$, i.e.
\begin{gather*}
\forall k^\mathbb{N},x^X\exists m^\mathbb{N}\forall y^X(x\in\mathrm{dom}A\land y\in\mathrm{dom}A \land\norm{x-_Xy}_X\leq_{\mathbb{R}}2^{-m}\\
\rightarrow \vert Ax\vert-\vert Ay\vert\leq_\mathbb{R} 2^{-k}).
\end{gather*}
\end{enumerate}
\end{proposition}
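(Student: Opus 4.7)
The plan is to prove both implications via direct calculations that exploit the basic properties of the Yosida approximate collected in Lemma \ref{lem:basicProp}, combined with the axiom $(M1)$. Throughout, I use freely that $(RC)_{\lambda_0}$ guarantees $y\in\mathrm{dom}(J^A_\lambda)$ for every $y\in\mathrm{dom}A$ and every $\lambda\in(0,\lambda_0)$, and that $A_\lambda y\in A(J^A_\lambda y)$, so $J^A_\lambda y\in\mathrm{dom}A$.

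For the direction \emph{(2)$\Rightarrow$(1)}, let $x\in\mathrm{dom}A$ with witness $v\in Ax$ and $k$ be given. Since $A_tx\in A(J^A_tx)$ for $t\in(0,\lambda_0)$, axiom $(M1)$ applied to the membership $A_tx\in A(J^A_tx)$ yields $|A(J^A_tx)|\leq\norm{A_tx}$, so
\[
\vert Ax\vert-\norm{A_tx}_X\leq \vert Ax\vert-\vert A(J^A_tx)\vert.
\]
By lower semi-continuity (2) applied to $x$, pick $m$ such that $\norm{x-y}_X\leq 2^{-m}$ and $y\in\mathrm{dom}A$ entail $\vert Ax\vert-\vert Ay\vert\leq 2^{-k}$. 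Since $\norm{x-J^A_tx}_X\leq t\norm{v}_X$ by Lemma \ref{lem:basicProp}(8), choosing $n$ with $2^{-n}\leq\min\{2^{-m}/\max\{1,\norm{v}_X\},\lambda_0/2\}$ gives $\norm{x-J^A_{2^{-n}}x}_X\leq 2^{-m}$, hence $\vert Ax\vert-\norm{A_{2^{-n}}x}_X\leq 2^{-k}$ as required.

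For the direction \emph{(1)$\Rightarrow$(2)}, let $x\in\mathrm{dom}A$ and $k$ be given. By (1), pick $n$ with $2^{-n}<\lambda_0$ and $\vert Ax\vert-\norm{A_{2^{-n}}x}_X\leq 2^{-(k+1)}$. For any $y\in\mathrm{dom}A$, the range condition makes $A_{2^{-n}}y$ available, and $(M1)$ gives $\vert Ay\vert\geq\norm{A_{2^{-n}}y}_X$. Lipschitz continuity of the Yosida approximate (Lemma \ref{lem:basicProp}(7)) yields
\[
\bigl|\norm{A_{2^{-n}}x}_X-\norm{A_{2^{-n}}y}_X\bigr|\leq\norm{A_{2^{-n}}x-_XA_{2^{-n}}y}_X\leq 2^{n+1}\norm{x-_Xy}_X,
\]
so for $\norm{x-y}_X\leq 2^{-(n+k+2)}$ the difference is at most $2^{-(k+1)}$. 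Combining these,
\[
\vert Ax\vert-\vert Ay\vert\leq (\vert Ax\vert-\norm{A_{2^{-n}}x}_X)+(\norm{A_{2^{-n}}x}_X-\norm{A_{2^{-n}}y}_X)\leq 2^{-k}.
\]
Thus $m:=n+k+2$ works.

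The argument is essentially routine bookkeeping; the only mildly delicate point is to remember to invoke $(RC)_{\lambda_0}$ to ensure that $J^A_\lambda y$ (and hence $A_\lambda y$) is well-defined for $y\in\mathrm{dom}A$ and $\lambda\in(0,\lambda_0)$, so that both Lemma \ref{lem:basicProp}(7),(8) and $(M1)$ can be legitimately applied in the system $H^\omega_p+(M1)$. No other obstacle is anticipated.
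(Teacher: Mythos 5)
Your proof is correct and follows essentially the same route as the paper's: in both directions you split $\vert Ax\vert - \vert Ay\vert$ (resp.\ $\vert Ax\vert - \norm{A_t x}$) via the intermediate quantity $\norm{A_\lambda x}$, using the $\norm{A_\lambda x}\leq\vert Ax\vert\leq\norm{v}$ part of $(M1)$ on one side, the $2/\lambda$-Lipschitz continuity of the Yosida approximate on the other (for (1)$\Rightarrow$(2)), and the bound $\norm{x-J^A_t x}\leq t\norm{v}$ together with $\vert A(J^A_t x)\vert\leq\norm{A_t x}$ for (2)$\Rightarrow$(1), with only cosmetic differences in how the final $n$ or $m$ is specified.
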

\begin{proof}
From (1) to (2), let $x\in\mathrm{dom}A$ and $k$ be given. For $y\in\mathrm{dom}A$, we have
\begin{align*}
\vert Ax\vert-\vert Ay\vert&\leq\vert Ax\vert-\norm{A_\lambda y}\\
&\leq\vert Ax\vert-\norm{A_\lambda x}+\vert\norm{A_\lambda x}-\norm{A_\lambda y}\vert\\
&\leq\vert Ax\vert-\norm{A_\lambda x}+2/\lambda\norm{x-y}
\end{align*}
for any $\lambda\in (0,\lambda_0)$. Now, using (1) we pick $n$ such that $\vert Ax\vert-\norm{A_{2^{-n}} x}\leq 2^{-(k+1)}$ and then pick $m=n+k+2$ such that $2^{n+1}\norm{x-y}\leq 2^{-(k+1)}$ which yields $\vert Ax\vert-\vert Ay\vert\leq 2^{-k}$.\\

From (2) to (1), let $x\in\mathrm{dom}A$ and $k$ be given. Using (2), we pick an $m$ such that $\vert Ax\vert-\vert Ay\vert\leq 2^{-k}$ for all $y\in\mathrm{dom}A$ such that $\norm{x-y}\leq 2^{-m}$. Now, for $n\geq b+m$ for $b\geq\norm{v}$ for some $v\in Ax$, we then get
\[
\norm{x-J^A_{2^{-n}}x}\leq 2^{-n}\norm{v}\leq 2^{-m}
\]
which in particular implies
\[
\vert Ax\vert-\norm{A_{2^{-n}}x}\leq \vert Ax\vert-\vert AJ^A_{2^{-n}}x\vert\leq 2^{-k}
\]
using $A_{2^{-n}}x\in AJ^A_{2^{-n}}x$.
\end{proof}
In that way, the convergence of $\norm{A_\lambda x}$ to $\vert Ax\vert$ on $\mathrm{dom}A$ relates to an extensionality principle of $\vert A\cdot\vert$. Now, in the context of set-valued operators, these continuity and extensionality principles can be logically complicated and intricate (see \cite{Pis2022} as well as the forthcoming \cite{Pis2022b}). In any way, as in the case of the functional $\langle\cdot,\cdot\rangle_s$, the logical methodology based on the monotone Dialectica interpretation now implies the following quantitative version of the statement of item (2): under this interpretation, the statement (2) is upgraded to the existence of a ``modulus of uniform lower semicontinuity'' which, as with $\langle\cdot,\cdot\rangle_s$, by the uniformity on $x$ induced by majorization, is essentially a modulus of uniform continuity. Concretely, this uniformized version of item (2) can be formally hardwired into the system by extending it with an additional constant $\varphi$ of type $\mathbb{N}\to (\mathbb{N}\to\mathbb{N})$ together with the axiom
\begin{gather*}
\forall b^\mathbb{N},k^\mathbb{N},x^X,y^X,u^X,v^X\Big(\Big(\norm{x}_X,\norm{y}_X,\norm{u}_X,\norm{v}_X<_\mathbb{R}b\tag{L2}\\
\land u\in Ax\land v\in Ay\land \norm{x-_Xy}_X<_{\mathbb{R}}2^{-\varphi(k,b)}\Big)\rightarrow \vert Ax\vert-\vert Ay\vert\leq_\mathbb{R} 2^{-k}\Big).
\end{gather*}
Under this extension, Lemma \ref{lem:resolventROC} is then the natural extraction of a corresponding rate of convergence from the above equivalence proof, under this (therefore) necessary assumption of a modulus of uniform continuity for $\vert A\cdot\vert$, following the previous metatheorems. Note however that these metatheorems in general, through this treatment of $\vert A\cdot\vert$, imply a dependence of the extracted bounds on a majorant for the constant $\vert A\cdot\vert$, i.e.\ on a function $f:\mathbb{N}\to\mathbb{N}$ such that
\[
\norm{x}\leq b\to \vert Ax\vert\leq f(b)\text{ for all }x\in\mathrm{dom}A.
\]
Only under this additional dependence on a majorant for $\vert A\cdot\vert$ do the previous metatheorems contained in Theorems \ref{thm:metatheoremSC} and \ref{thm:metatheoremSCI} extend to $H^\omega_p+(L1)+(L2)$.

\medskip

Here, we shortly want to make a note on the strength of the existence of such a majorant. For this, we first remind on the notion of a majorizable operator introduced in \cite{Pis2022} (see also the previous Remark \ref{rem:unifEqui}): an operator $A:X\to 2^X$ is called majorizable if there exists a function $f:\mathbb{N}\to\mathbb{N}$ such that
\[
\forall x\in\mathrm{dom}A,b\in\mathbb{N}\left( \norm{x}\leq b\to \exists y\in Ax\left(\norm{y}\leq f(b)\right)\right).
\]
As discussed in \cite{Pis2022}, there are non-majorizable operators and so the assumption that $A$ is majorizable is a proper restriction. In particular, note now that if $A$ is such that the minimal selection $A^\circ x=\mathrm{argmin}\{\norm{y}\mid y\in Ax\}$ exists, then $\vert Ax\vert=\norm{A^\circ x}$ and thus majorizability of $A$ is equivalent to majorizability of $\vert A\cdot\vert$. Thus, while in general potentially a bit weaker, the assumption of majorizability of $\vert A\cdot\vert$ in particular also seems to carry additional strength similar to that of majorizability of $A$ in most cases.

\medskip

However, as apparent from the result in Lemma \ref{lem:resolventROC}, such a majorant however does not feature in the extracted bounds and we actually find that such a majorant also does not feature in any of the other quantitative results in the context of Plant's theorem. While this is seems to be a particular coincidence in the context of Lemma \ref{lem:resolventROC}, there is actually a logical reason which guarantees this ``non-dependence'' a priori for all the other results. Concretely, the reason is that all the other proofs analyzed have the two crucial properties that, for one, they can be formalized under the assumption of a rate of convergence for $\norm{A_\lambda x}$ toward $\vert Ax\vert$ which can similarly be added to the system and that, for another, they are ``pointwise'' results in $x$ in the sense that they do not require knowledge of $\vert A\cdot\vert$ for any point other than $x$. In that way, instead of following the above route of formalizing the whole functional $\vert A\cdot\vert$, one can add two constant representing this ``particular '' $x$ and a witness $v\in Ax$ as well as a constant $\vert Ax\vert$ of type $\mathbb{N}^\mathbb{N}$ for this single value of $\vert A\cdot\vert$ at the constant $x$ and a constant $\varphi$ representing a rate of convergence for $\norm{A_\lambda x}$ to $\vert Ax\vert$ for this single constant $x$. Then, the other proofs still formalize and in particular depend only on majorants for $\varphi$, $x$, $v$ and $\vert Ax\vert$ and the one for the latter three can be assumed to coincide and to be represented in the above results by $b$. In particular, the strong assumption of majorizability of $\vert A\cdot\vert$ can be avoided a priori in that way. That the extracted rates are true for all $x$ then is drawn as a conclusion on the metalevel as the additional constants were generic. In this way, this also provides a logical insight on why all the other results in the context of Plant's theorem remain true if $\varphi_1$ represents any other rate of convergence besides the one constructed from the modulus of uniform continuity for $\vert A\cdot\vert$ as commented on before.

\medskip

As a last comment on the logical particularities of the proofs towards Plant's theorem, we want to note in the context of Miyadera's lemma from \cite{Miy1971} that the only properties of $\langle\cdot,\cdot\rangle_s$ required in the proof given in \cite{Miy1971} are the properties discussed in Section \ref{sec:supremumIP}. Further, by the fact that the proof given by Crandall in \cite{Cra1973} of his respective result actually only invokes Miyadera's lemma for $x\in\mathrm{dom}A$ and for \emph{some} $\zeta^*\in J(x-x_0)$, this $\zeta^*$ can thus for simplicity be assumed to coincide with $j_{v,y_0}$ for $v\in Ax$ witnessing $x\in\mathrm{dom}A$ and $y_0\in Ax_0$ as in Miyadera's lemma. So, combined we have that this use of Miyadera's lemma in the context of the proof of Plant's result immediately formalizes in the system $H^\omega_p+(+)$.

\medskip

We now consider the theorem of Reich (which features less logical subtleties). The main object featuring in Reich's proof is the value $d$, the infimum over norms of all elements in the range of the operator. Internally in $H^\omega_p$, this value can be represented by adding a further constant of type $\mathbb{N}^\mathbb{N}$ which we, for simplicity, also denote by $d$ together with a further constant $f^{\mathbb{N}\to\mathbb{N}}$ representing the witness for the monotone Dialectica interpretation of the property
\[
\forall k\exists y, z\left( z\in Ay\land \norm{z}-d\leq 2^{-k}\right)
\]
expressing that $d$ indeed is the said infimum. So, we can concretely facilitate the use of $d$ by adding the following two axioms for $d$:
\[
\begin{cases}
\forall y^X,z^X\left( z\in Ay\rightarrow d\leq_\mathbb{R}\norm{z}_X\right),\\
\forall k^\mathbb{N}\exists y, z\preceq_X f(k)1_X\left( z\in Ay\land \norm{z}_X-d\leq_\mathbb{R}2^{-k}\right).
\end{cases}\tag{$d$}
\]
The additional constants are immediately majorizable: $f$ is majorized by $f^M$ as it is of type $\mathbb{N}^\mathbb{N}$ and $d$ is just majorized by $(n)_\circ$ for $n\geq \norm{d_X}$. Therefore the bound extraction theorems extend to this augmentation of $H^\omega_p$ in an immediate way where, in particular, the extracted bounds will in general depend on an upper bound on $d$ as can be seen from some of the bounds extracted in the context of Reich's theorem.

\medskip

The second particularity of the formalization of the proof of Reich's result is that one actually needs to work with the limit of $J_tx/t$, called $-v_x$ in the above, as a concrete object. In the context of the limit operator $C$ however, we can formally deal with this object in the context of the formal systems underlying this extraction rather immediately by utilizing the previously extracted rate $\varphi_2(k)$ (where we for simplicity omit the other parameters for now and switched back to a representation of errors via $2^{-k}$) to then address the limit $v_x$ in the system by considering
\[
v_x=-C\left(\left(\frac{J^A_{\varphi_2(k)}x}{\varphi_2(k)}\right)_k\right).
\]
In particular, with this definition of $v_x$, the other proofs in the context of Reich's theorem immediately formalize.

\medskip

As a last logical comment, we shortly want to discuss on the particular use of the law of excluded middle (and thus of classical logic) in the proofs for the results of Reich and Plant and how this features in the extractions, considering the fact that rates of convergence were nevertheless extracted in the absence of monotonicity. This in fact relates to the circumstances of the (previously called ``smoothable'') case distinctions. Namely, as can be observed by closer inspection of the corresponding proofs, the only part where classical logic actually features in the proofs of Reich and Plant is trough the use of multiple case distinctions which, in the case of Reich's result, take the form of dividing the proof between whether
\[
d=0\text{ or }d>0
\]
and, in the case of Plant's result, take the form of dividing the proof between whether
\[
\vert Ax\vert =0\text{ or }\vert Ax\vert >0.
\]
The deductions of the main results from both parts of this case distinction are essentially constructive (where the $=0$-case is almost trivial in both cases) and in that way, the constructive metatheorems actually allow for the extraction of a rate of convergence from the $>0$-cases as the corresponding results are of the form
\begin{gather*}
d>_\mathbb{R}0\to C\equiv \forall c^\mathbb{N}\left( d\geq_\mathbb{R} 2^{-c}\to C\right)\\
\text{ and }\\
\vert Ax\vert>_\mathbb{R}0\to C\equiv \forall c^\mathbb{N}\left( \vert Ax\vert\geq_\mathbb{R} 2^{-c}\to C\right) 
\end{gather*}
where $C$ is any of the respective convergence statements. These rates will moreover depend on the parameter $c$. For the $=0$-cases, being of the form
\[
d=_\mathbb{R}0\to C\text{ and }\vert Ax\vert =_\mathbb{R}0\to C
\]
where $C$ is any of the respective convergence statements, we find that in these cases one can actually find different constructive proofs (compared to the ones given in the literature) of the classically equivalent but constructively stronger statements
\[
\exists {c'}^\mathbb{N}\left( d\leq_\mathbb{R}2^{-{c'}}\to C\right)\text{ and } \exists {c'}^\mathbb{N}\left( \vert Ax\vert \leq_\mathbb{R}2^{-{c'}}\to C\right).
\]
These new proofs of said statements (which were presented and analyzed in the previous section) are again essentially constructive so that the constructive metatheorems guarantee the extraction of a rate again, now together with the extraction of an upper bound on (and thus a realizer of) the value $c'$. The previously mentioned ``smoothening'' is now just a combination of these two cases by instantiating the former rate with $c=c'$ and combining the two resulting rates.

\medskip

\noindent
{\bf Acknowledgments:}  I want to thank Ulrich Kohlenbach for many insightful and detailed comments on various drafts of this paper. The results of this paper form the main part of Chapters 4 and 7 of my doctoral dissertation \cite{Pis2024b} written under his supervision. I also want to thank Pedro Pinto for the collaboration on the work \cite{PP2022} which served as a strong source of inspiration for the logical systems presented here.

The author was supported by the `Deutsche Forschungs\-gemein\-schaft' Project DFG KO 1737/6-2.

\end{document}